\documentclass[11pt]{amsart}

\usepackage{amsmath}
\usepackage{amssymb}
\usepackage{amsthm}
\usepackage{cancel}
\usepackage{tcolorbox}
\usepackage[margin=1.2in]{geometry}
\usepackage{tikz,tikz-cd}
\usetikzlibrary{external}
\usetikzlibrary{calc,arrows.meta}
\usepackage{cases}
\usepackage[]{mdframed}
\usepackage{url}
\usepackage{hyperref}
\usepackage[normalem]{ulem}

\newcommand{\Z}{\mathbb{Z}}
\newcommand{\C}{\mathbb{C}}
\newcommand{\R}{\mathbb{R}}

\newcommand{\mN}{L}
\newcommand{\mng}{r}
\newcommand{\oG}{M}
\newcommand{\oH}{N}
\newcommand{\mwo}{\omega_1}
\newcommand{\mwt}{\omega_2}
\newcommand{\whl}{P}
\newcommand{\wl}{Q}

\theoremstyle{definition}
\newtheorem{definition}{Definition}

\theoremstyle{remark}
\newtheorem{remark}{Remark}
\theoremstyle{plain}
\newtheorem{theorem}{Theorem}
\newtheorem{proposition}{Proposition}

\newtheorem{corollary}{Corollary}

\numberwithin{equation}{section}
\numberwithin{lemma}{section}
\numberwithin{definition}{section}
\numberwithin{proposition}{section}
\numberwithin{corollary}{section}
\numberwithin{theorem}{section}
\numberwithin{remark}{section}

\title[Heat kernels on Cayley graphs \dots]{Heat kernels on Cayley graphs of free products of finite groups}

\author{Kamila Kashaeva}
\address{Section de mathématiques, Université de Genève, rue du Conseil-Général 7-9, 1205 Genève, Suisse}
\email{kamila.kashaeva@unige.ch}
\date{}
\begin{document}

\maketitle

\begin{abstract}
We obtain explicit formulas for heat kernels of the normalized Laplacians for two infinite families of Cayley graphs. The first family corresponds to free products of finite groups $G=G_1*\dots*G_{\mng}$, with $\mng \ge 2$, and where all factors $G_i$ have the same order $\mN\ge 2$. The second family corresponds to free products of two arbitrary nontrivial finite groups $G*H$ of unequal order. In both families, the generating set consists of all non-identity elements in each factor. 
Extending the approach of Chung--Yau, we show that the Cayley graph in each family
strongly and regularly covers a weighted half-line and a weighted line, respectively. We solve the spectral problems on the half-line and the line, and then apply spectral transfer principles for strong and regular coverings to derive formulas for the heat kernels on the Cayley graphs. 
For the first family, the spectrum of the Laplacian consists of a single interval together with, when $\mN>\mng$, one additional isolated eigenvalue. When $\mN=2$, we recover the well-known results for regular trees. For the second family, the spectrum of the Laplacian consists of two intervals and two eigenvalues.

\end{abstract}

\section{Introduction}

There are relatively few examples of infinite graphs for which the spectrum of the Laplacian and the associated heat kernel are explicitly known. Heat kernels are fundamental solutions of the heat equation and are also one way of encoding the spectral decomposition of the Laplacian. 
Explicit formulas can be useful in counting problems on graphs, such as counting paths and geodesics. They are also particularly convenient under coverings, where the heat kernels of the covering and quotient graphs can be related directly by summation or periodization formulas; see, for example,~\cite{MR3394421,MR1667452,MR1852183}.
Explicit expressions are known for lattice graphs and regular trees; see~\cite{MR5005075} and the references therein. For regular trees, explicit formulas for the heat kernel were obtained by Chung--Yau~\cite{MR1667452}, Cowling--Meda--Setti~\cite{MR1653343}, and Chinta--Jorgenson--Karlsson~\cite{MR3394421}. Related spectral results for infinite Cayley and Schreier graphs can be found in the works of Kesten~\cite{MR109367}, Bartholdi--Grigorchuk~\cite{MR1841750} and Grigorchuk--Nagnibeda--Pérez~\cite{MR4458570}.

In a recent work~\cite{MR5101585}, we obtained an explicit formula for the heat kernel of a Cayley graph of $\operatorname{PSL}_2(\Z)$, where we used techniques inspired by Chung--Yau to reduce the problem to a simpler weighted graph. The purpose of this paper is to extend this approach to a broader class of Cayley graphs arising from free products of finite groups. By successively taking quotients by finite groups of symmetries, we reduce these graphs to a weighted half-line and a weighted line. We then solve the corresponding spectral problems and use the covering relations to recover the heat kernels on the original Cayley graphs. Our approach can be compared to work by Figà--Talamanca--Picardello~\cite{MR710827}, Iozzi--Picardello~\cite{MR729363}, Faraut--Picardello~\cite{MR779540}, Cartwright--Soardi~\cite{MR820357,MR846137}, Aomoto--Kato~\cite{MR948999}, and Woess~\cite{MR1743100} on random walks and harmonic analysis on free products and related symmetric graphs.

Recall that, if $\Gamma$ is a Cayley graph of a group $G$ with a symmetric generating set $S=S^{-1}$, the normalized Laplacian $\mathcal{L}_{\Gamma}$ on $\Gamma$ is a bounded self-adjoint operator in the Hilbert space $\ell^2(G)$ defined by
\begin{equation}\label{Laplacian_G}
(\mathcal{L}_{\Gamma}f)(x)=f(x)-\frac{1}{|S|}\sum_{a\in S}f(xa),\quad \forall x\in G.
\end{equation}
The heat operator $h_t^{\Gamma}$ of $\Gamma$ is the exponential of the Laplacian: $h_t^{\Gamma}=e^{-t\mathcal{L}_{\Gamma}}$. The invariance of $\Gamma$ under left translations implies that the heat operator is described in terms of a \emph{heat kernel function} $k_t^{\Gamma}\colon G\to \R$ defined through the formula
\begin{equation}\label{heat_kernel_det} 
(h_t^{\Gamma}f)(x)=\sum_{y\in G}k_t^{\Gamma}(y^{-1}x)f(y),\quad f\in \ell^2(G).
\end{equation}

We consider two families of Cayley graphs. The first family comes from free products
$
G_1*\dots*G_r
$
where $r\ge 2$, and each $G_i$ is a finite group of a given order $\mN\ge2$. The second family is associated to free products
$
G*H
$,
where 
$2\le|G|<|H|$.
In both cases, the generating set is the union of all non-identity elements in each free factor. This choice of generators leads to Cayley graphs with a tree-like structure obtained by gluing complete graphs along vertices. 

For both families, we obtain explicit formulas for the heat kernel functions, from which we extract the corresponding spectra. For the first family, the spectrum consists of one interval, together with an additional isolated eigenvalue when $\mN>\mng$. In the special case $\mN=2$, the Cayley graph reduces to a regular tree, and our formula recovers the heat kernel of the $\mng$-regular tree first obtained by Chung--Yau~\cite{MR1667452}. For the second family, the spectrum consists of two intervals and two isolated eigenvalues.
The occurrence of continuous spectrum, together with discrete eigenvalues, also appears in more general spectral problems, see for example Aomoto~\cite{MR1084706}, Sunada~\cite{MR1152674}, and more recently Avni--Breuer--Simon~\cite{MR4103777}.

We now describe our main results in more detail.

Let 
$$
\mathcal X=\mathcal X_{\mN}^r= \mathrm{Cay}(\mathcal A, S)
$$ denote the Cayley graph associated with the free product
$$
\mathcal A=\mathcal A_{\mN}^r:=G_1*\cdots *G_{\mng},\quad r\ge2,\quad |G_1|=\dots=|G_{\mng}|=\mN\ge2,
$$ 
and the generating set
$S = \bigsqcup_{i=1}^\mng \bigl(G_i \setminus \{e\}\bigr)$. The case $\mN=5$ and $\mng=3$ is
illustrated in Fig.~\ref{fig:cayley_graph}.

\begin{figure}[h]
\center
\begin{tikzpicture}[scale=0.9]

\tikzset{
  treeedge/.style={line width=0.6pt},
  cliqueedge/.style={gray!90, line width=0.3pt}
}

\coordinate (e) at (0,0);

\coordinate (C1) at (90:1.8);

\coordinate (a0) at ($ (C1) + (270:1.3) $); 
\coordinate (a1) at ($ (C1) + (342:1.3) $); 
\coordinate (a2) at ($ (C1) + (54:1.3) $);
\coordinate (a3) at ($ (C1) + (126:1.3) $);
\coordinate (a4) at ($ (C1) + (198:1.3) $);

\coordinate (C2) at (210:1.8);

\coordinate (b0) at ($ (C2) + (30:1.3) $); 
\coordinate (b1) at ($ (C2) + (102:1.3) $); 
\coordinate (b2) at ($ (C2) + (174:1.3) $);
\coordinate (b3) at ($ (C2) + (246:1.3) $);
\coordinate (b4) at ($ (C2) + (318:1.3) $);

\coordinate (C3) at (330:1.8);

\coordinate (c0) at ($ (C3) + (150:1.3) $); 
\coordinate (c1) at ($ (C3) + (222:1.3) $); 
\coordinate (c2) at ($ (C3) + (294:1.3) $);
\coordinate (c3) at ($ (C3) + (6:1.3) $);
\coordinate (c4) at ($ (C3) + (78:1.3) $);

\draw[fill=black] (e) circle [radius=2.5pt];
\node at (-0.3,0.17) {\small$e$};

\draw[fill=black] (a1) circle [radius=2pt];
\draw[fill=black] (a2) circle [radius=2pt];
\draw[fill=black] (a3) circle [radius=2pt];
\draw[fill=black] (a4) circle [radius=2pt];

\draw(e)--(a1); \draw(e)--(a2); \draw(e)--(a3); \draw(e)--(a4);
\draw[cliqueedge](a1)--(a2); \draw[cliqueedge](a1)--(a3); \draw[cliqueedge](a1)--(a4); 
\draw[cliqueedge](a2)--(a3); \draw[cliqueedge](a2)--(a4); 
\draw[cliqueedge](a3)--(a4); 

\node at ($ (C1) + (342:1.6) $) {\tiny$...$};
\node at ($ (C1) + (126:1.55) $) {\tiny$...$};
\node at ($ (C1) + (198:1.6) $) {\tiny$...$};

\draw[fill=black] (b1) circle [radius=2pt];
\draw[fill=black] (b2) circle [radius=2pt];
\draw[fill=black] (b3) circle [radius=2pt];
\draw[fill=black] (b4) circle [radius=2pt];

\draw(e)--(b1); \draw(e)--(b2); \draw(e)--(b3); \draw(e)--(b4);
\draw[cliqueedge](b1)--(b2); \draw[cliqueedge](b1)--(b3); \draw[cliqueedge](b1)--(b4); 
\draw[cliqueedge](b2)--(b3); \draw[cliqueedge](b2)--(b4); 
\draw[cliqueedge](b3)--(b4); 

\node at ($ (C2) + (102:1.5) $) {\tiny$...$};
\node at ($ (C2) + (174:1.6) $) {\tiny$...$};
\node at ($ (C2) + (246:1.55) $) {\tiny$...$};
\node at ($ (C2) + (318:1.6) $) {\tiny$...$};

\draw[fill=black] (c1) circle [radius=2pt];
\draw[fill=black] (c2) circle [radius=2pt];
\draw[fill=black] (c3) circle [radius=2pt];
\draw[fill=black] (c4) circle [radius=2pt];

\draw(e)--(c1); \draw(e)--(c2); \draw(e)--(c3); \draw(e)--(c4);
\draw[cliqueedge](c1)--(c2); \draw[cliqueedge](c1)--(c3); \draw[cliqueedge](c1)--(c4); 
\draw[cliqueedge](c2)--(c3); \draw[cliqueedge](c2)--(c4); 
\draw[cliqueedge](c3)--(c4); 

\node at ($ (C3) + (222:1.6) $) {\tiny$...$};
\node at ($ (C3) + (294:1.55) $) {\tiny$...$};
\node at ($ (C3) + (6:1.6) $) {\tiny$...$};
\node at ($ (C3) + (78:1.55) $) {\tiny$...$};

\coordinate (Attach) at (a2);

\coordinate (D1) at ($ (Attach) + (5:1.5) $);
\coordinate (D2) at ($ (Attach) + (75:1.5) $);


\coordinate (d1_0) at ($ (D1) + (185:0.7) $);
\coordinate (d1_1) at ($ (D1) + (257:0.7) $);
\coordinate (d1_2) at ($ (D1) + (329:0.7) $);
\coordinate (d1_3) at ($ (D1) + (41:0.7) $);
\coordinate (d1_4) at ($ (D1) + (113:0.7) $);

\draw[fill=black] (d1_1) circle [radius=1.5pt];
\draw[fill=black] (d1_2) circle [radius=1.5pt];
\draw[fill=black] (d1_3) circle [radius=1.5pt];
\draw[fill=black] (d1_4) circle [radius=1.5pt];

\draw(Attach)--(d1_1); \draw(Attach)--(d1_2); \draw(Attach)--(d1_3); \draw(Attach)--(d1_4);
\draw[cliqueedge](d1_1)--(d1_2); \draw[cliqueedge](d1_1)--(d1_3); \draw[cliqueedge](d1_1)--(d1_4); 
\draw[cliqueedge](d1_2)--(d1_3); \draw[cliqueedge](d1_2)--(d1_4); 
\draw[cliqueedge](d1_3)--(d1_4); 

\node at ($ (D1) + (257:0.9) $) {\tiny$...$};
\node at ($ (D1) + (325:0.9) $) {\tiny$...$};
\node at ($ (D1) + (113:0.9) $) {\tiny$...$};

\coordinate (d2_0) at ($ (D2) + (255:0.7) $);
\coordinate (d2_1) at ($ (D2) + (327:0.7) $);
\coordinate (d2_2) at ($ (D2) + (39:0.7) $);
\coordinate (d2_3) at ($ (D2) + (111:0.7) $);
\coordinate (d2_4) at ($ (D2) + (183:0.7) $);

\draw(Attach)--(d2_1); \draw(Attach)--(d2_2); \draw(Attach)--(d2_3); \draw(Attach)--(d2_4);
\draw[cliqueedge](d2_1)--(d2_2); \draw[cliqueedge](d2_1)--(d2_3); \draw[cliqueedge](d2_1)--(d2_4); 
\draw[cliqueedge](d2_2)--(d2_3); \draw[cliqueedge](d2_2)--(d2_4); 
\draw[cliqueedge](d2_3)--(d2_4); 

\node at ($ (D2) + (39:0.9) $) {\tiny$...$};
\node at ($ (D2) + (111:0.9) $) {\tiny$...$};
\node at ($ (D2) + (183:0.95) $) {\tiny$...$};

\draw[fill=black] (d2_1) circle [radius=1.5pt];
\draw[fill=black] (d2_2) circle [radius=1.5pt];
\draw[fill=black] (d2_3) circle [radius=1.5pt];
\draw[fill=black] (d2_4) circle [radius=1.5pt];

\coordinate (Attach2) at (d1_3);

\coordinate (E1) at ($ (Attach2) + (5:1) $);
\coordinate (E2) at ($ (Attach2) + (75:1) $);


\coordinate (e1_0) at ($ (E1) + (185:0.5) $);
\coordinate (e1_1) at ($ (E1) + (257:0.5) $);
\coordinate (e1_2) at ($ (E1) + (329:0.5) $);
\coordinate (e1_3) at ($ (E1) + (41:0.5) $);
\coordinate (e1_4) at ($ (E1) + (113:0.5) $);

\draw[fill=black] (e1_1) circle [radius=1pt];
\draw[fill=black] (e1_2) circle [radius=1pt];
\draw[fill=black] (e1_3) circle [radius=1pt];
\draw[fill=black] (e1_4) circle [radius=1pt];

\draw(Attach2)--(e1_1); \draw(Attach2)--(e1_2); \draw(Attach2)--(e1_3); \draw(Attach2)--(e1_4);
\draw[cliqueedge](e1_1)--(e1_2); \draw[cliqueedge](e1_1)--(e1_3); \draw[cliqueedge](e1_1)--(e1_4); 
\draw[cliqueedge](e1_2)--(e1_3); \draw[cliqueedge](e1_2)--(e1_4); 
\draw[cliqueedge](e1_3)--(e1_4); 

\node at ($ (E1) + (257:0.65) $) {\tiny$...$};
\node at ($ (E1) + (326:0.7) $) {\tiny$...$};
\node at ($ (E1) + (38:0.7) $) {\tiny$...$};
\node at ($ (E1) + (102:0.6) $) {\tiny$...$};

\coordinate (e2_0) at ($ (E2) + (255:0.5) $);
\coordinate (e2_1) at ($ (E2) + (327:0.5) $);
\coordinate (e2_2) at ($ (E2) + (39:0.5) $);
\coordinate (e2_3) at ($ (E2) + (111:0.5) $);
\coordinate (e2_4) at ($ (E2) + (183:0.5) $);

\draw(Attach2)--(e2_1); \draw(Attach2)--(e2_2); \draw(Attach2)--(e2_3); \draw(Attach2)--(e2_4);
\draw[cliqueedge](e2_1)--(e2_2); \draw[cliqueedge](e2_1)--(e2_3); \draw[cliqueedge](e2_1)--(e2_4); 
\draw[cliqueedge](e2_2)--(e2_3); \draw[cliqueedge](e2_2)--(e2_4); 
\draw[cliqueedge](e2_3)--(e2_4); 

\draw[fill=black] (e2_1) circle [radius=1pt];
\draw[fill=black] (e2_2) circle [radius=1pt];
\draw[fill=black] (e2_3) circle [radius=1pt];
\draw[fill=black] (e2_4) circle [radius=1pt];

\node at ($ (E2) + (39:0.7) $) {\tiny$...$};
\node at ($ (E2) + (111:0.65) $) {\tiny$...$};
\node at ($ (E2) + (183:0.7) $) {\tiny$...$};
\end{tikzpicture}
\caption{The Cayley graph $\mathcal{X}_{5}^3$. The bold edges correspond to geodesics to the identity.} \label{fig:cayley_graph}
\end{figure}

Denote by $|g|$ the length of the reduced word representing $g\in \mathcal A$, so that $|e|=0$.
Our first main result gives an explicit formula for the heat kernel function $k_t^{\mathcal X}$, defined in \eqref{heat_kernel_det}.
\begin{theorem} \label{K1}
The heat kernel function $k_t^{\mathcal X}$ defined in~\eqref{heat_kernel_det} is given by
${k}_t^{\mathcal X}(g)=K_t^{\mathcal X}(|g|)$, where, for $n\in\Z_{\ge0}$,
\begin{multline} \label{K1_formula}
 K_t^{\mathcal X}(n) = \frac{\max(0,q-s)}{(q+1)(-q)^n} e^{-t\frac{q+1}{q}}\\
+ \frac{2s}{\pi \sqrt{qs}^{\,n}} \int_0^\pi
 \frac{e^{-t\frac{1+qs-2\sqrt{qs}\cos x}{q(s+1)}}\Bigl( s\sin((n+1)x)+(q-1)\sqrt{\frac{s}{q}}\sin nx-\sin((n-1)x) \Bigr) \sin x}{\bigl((q-1)\sqrt{\frac{s}{q}}+(s-1)\cos x\big)^2 +\big((s+1)\sin x\bigr)^2} \mathrm{d}x,
\end{multline}
where $q:=\mN-1$ and $s:=\mng-1$.
\end{theorem}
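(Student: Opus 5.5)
The plan is to reduce to a radial problem on a weighted half-line, solve that problem spectrally, and transfer back. The heat kernel is radial because the stabilizer of $e$ in $\operatorname{Aut}(\mathcal X)$ acts transitively on each sphere $\{|g|=n\}$. Using the tree-of-cliques structure, one sees that $k_t^{\mathcal X}(g)$ depends only on $|g|$, so $k_t^{\mathcal X}(g)=K_t(|g|)$.

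\textbf{Step 1: the radial operator.} Each vertex at distance $n\ge1$ lies in exactly one clique that contains a vertex at distance $n-1$. That clique contributes one neighbour at distance $n-1$ and $\mN-2=q-1$ neighbours at distance $n$. The other $\mng-1=s$ cliques through the vertex each contribute $q$ neighbours at distance $n+1$. The degree is $|S|=(q+1)(s+1)$. For a radial function $F(n)$, the normalized Laplacian therefore becomes
\[
(\mathcal L F)(n)=F(n)-\frac{F(n-1)+(q-1)F(n)+sqF(n+1)}{(q+1)(s+1)},\qquad n\ge1,
\]
and at the origin $(\mathcal L F)(0)=F(0)-F(1)$. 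The sphere sizes are $|S_n|=(s+1)q\,(sq)^{n-1}$. This is the weighted half-line, and the sphere sizes are its vertex weights; the radial map is a strong, regular covering in the sense of Chung--Yau. The transfer principle then gives
\[
K_t(n)=\frac{1}{|S_n|}\,\bigl(e^{-t\mathcal L}\delta_0\bigr)(n)\cdot\mathrm{const},
\]
or, in words, the heat kernel on $\mathcal X$ is the half-line heat kernel from $0$, divided by the sphere size.

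\textbf{Step 2: spectral problem on the half-line.} I would look for generalized eigenfunctions of the form $\phi(n)=(qs)^{-n/2}\bigl(A e^{inx}+Be^{-inx}\bigr)$. Substituting into the recursion for $n\ge1$ gives the eigenvalue
\[
\lambda(x)=\frac{1+qs-2\sqrt{qs}\cos x}{q(s+1)},
\]
which matches the exponent in~\eqref{K1_formula}. The boundary condition at $n=0$, which links $F(0)$ and $F(1)$, determines the ratio $B/A$ as a Jost-type function. Its modulus squared gives the denominator
\[
\bigl((q-1)\sqrt{s/q}+(s-1)\cos x\bigr)^2+\bigl((s+1)\sin x\bigr)^2,
\]
so the spectral measure density is $\sin^2x$ divided by this expression. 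I would then solve for the numerator combination $s\sin((n+1)x)+(q-1)\sqrt{s/q}\sin nx-\sin((n-1)x)$, which is the regular solution normalized at the origin.

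\textbf{Step 3: the isolated eigenvalue.} Next I would locate a zero of the Jost function off the unit circle, i.e.\ a solution with $\phi(n)=c(-q)^{-n}$ times the weight factor, so that it is square-summable against $|S_n|$. Plugging $F(n)=(-1/q)^n$ into the recursion gives eigenvalue $(q+1)/q$. The boundary condition must also be checked, and square-summability requires $\sum_n (sq)^n q^{-2n}<\infty$, that is, $s<q$, or equivalently $\mN>\mng$. The normalization $\|F\|^2$ then gives the residue $(q-s)/(q+1)$.

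\textbf{Step 4: assembly, and the main obstacle.} Writing $\delta_0$ through the spectral resolution, i.e.\ the Weyl–Titchmarsh measure obtained from the resolvent Green function $G(0,n;z)$ via the Stieltjes inversion formula, yields the continuous term plus the point mass. I expect the main obstacle to be this resolvent computation together with its normalization constants. Concretely, I would compute $G(0,n;z)$ explicitly using $z=e^{ix}$, take the jump across the cut, and check the total mass at $n=0$ and $t=0$ against $K_0(0)=1$. That check also pins down the prefactor $2s/\pi$.
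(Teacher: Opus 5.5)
Your plan works and, after two slips are fixed (last paragraph), leads to the theorem, but by a different route from the paper's.

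\textbf{How the paper does it.} It reaches the half-line through iterated quotients by $S_r$, $S_{L-1}$, $S_{r-1}$. It then works with the symmetrized operator $\mathcal M$ on unweighted $\ell^2(\Z_{\ge0})$, whose coefficients are special at both $n=0$ and $n=1$. It computes $\langle f_x,f_y\rangle=H(x)\delta(x-y)$ distributionally via Poisson summation. Completeness is proved separately, by generating functions and residues, case by case ($L>r$, $L<r$, $L=r$).

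\textbf{How your route differs.} You keep the non-symmetrized radial operator, which is self-adjoint on $\ell^2(\Z_{\ge0},|S_n|)$. Your Step 1 shows that radial functions map to radial functions, which already gives radiality of $k_t$ without the quotient tower. In this form the recursion is identical for all $n\ge1$. So your ansatz can be taken for all $n\ge0$, and only the condition $\mu F(0)=F(1)$ remains. This is a Jacobi operator with cyclic vector $\delta_0$, so only the scalar spectral measure $\rho_0$ of $\delta_0$ is needed: $K_t(n)=\int e^{-t\lambda}\Phi_\lambda(n)\,d\rho_0(\lambda)$ with $\Phi_\lambda(0)=1$. Stieltjes inversion then gives completeness for free and treats all three cases uniformly.

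\textbf{The resolvent step closes quickly.} The decaying solution of $(\mathcal L-z)u=\delta_0$ gives $G(0,0;z)=\sqrt{qs}/(\sqrt{qs}\,\mu-\xi^{-1})$, with $\mu=1-z$ and $|\xi|>1$. On the cut the denominator tends to $J(x)=\sqrt{qs}\,\mu_x-e^{-ix}$ or its conjugate, where $(s+1)J(x)=(q-1)\sqrt{s/q}+(s-1)\cos x+i(s+1)\sin x$. Hence the density in $x$ is $\frac{2s(s+1)}{\pi}\sin^2x/|(s+1)J(x)|^2$. Take $\Phi_\lambda(n)=(qs)^{-n/2}c_x(n)/((s+1)\sin x)$, where $c_x(n)$ is your numerator combination; this yields exactly the prefactor $2s/\pi$ and a single factor $\sin x$. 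The only pole off the cut is at $\xi=-\sqrt{q/s}$, which lies outside the unit circle if and only if $q>s$, and its residue is $\frac{q-s}{q+1}$. So the constants come out of the computation, and the mass check at $t=0$ is only a consistency check.

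\textbf{What the paper's route buys instead.} It gives explicit distributional orthogonality relations for the generalized eigenfunctions. Its method also carries over to the two-sided line of Section~\ref{section2}. There the spectrum has multiplicity two and $\delta_0$ alone is not cyclic, so your approach would need a matrix-valued Weyl function.

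\textbf{Two slips to fix.} First, $|S|=r(L-1)=q(s+1)$, not $(q+1)(s+1)$. With your denominator the coefficients sum to $q/(q+1)$, and neither $\lambda(x)$ nor $(q+1)/q$ would come out. The values you state are the ones for $q(s+1)$. Second, with $\mathcal L$ your radial operator acting on functions, $K_t(n)=(e^{-t\mathcal L}\delta_0)(n)$ exactly, with no division. The factor $1/|S_n|$ belongs to $p_t(0,n)=\sum_{|g|=n}k_t(g)$, the kernel acting on measures. The factor $1/\sqrt{|S_n|}$ in \eqref{eq6} belongs to the symmetrized kernel. Your Step 3, which gives $\frac{q-s}{q+1}(-q)^{-n}e^{-t(q+1)/q}$, already uses the correct version, so do not divide again.
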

As a consequence, using the extremal values $0$ and $\pi$ of the integration variable $x$ in \eqref{K1_formula}, we obtain a complete description of the spectrum of the Laplacian on $\mathcal{X}$.
\begin{corollary}
The spectrum of the Laplacian $\mathcal{L}_{\mathcal{X}}$ on $\ell^2(\mathcal A)$ consists of the interval
$$
\Big[1-\frac{\mN-2+2\sqrt{(r-1)(\mN-1)}}{r(\mN-1)}, 1-\frac{\mN-2-2\sqrt{(r-1)(\mN-1)}}{r(\mN-1)} \Big]
$$
and, when $\mN>r$, the point $\frac{\mN}{\mN-1}$.
\end{corollary}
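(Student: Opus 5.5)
We derive the corollary from Theorem~\ref{K1} by reading off the spectral measure of $\mathcal L_{\mathcal X}$ from the heat kernel at the identity. Since $\mathcal X$ is a Cayley graph, the spectral measure $\mu$ of $\mathcal L_{\mathcal X}$ associated with $\delta_e$ satisfies
$$
K_t^{\mathcal X}(0)=\langle e^{-t\mathcal L_{\mathcal X}}\delta_e,\delta_e\rangle=\int e^{-t\lambda}\,\mathrm d\mu(\lambda).
$$
By left invariance, the spectral measure of every $\delta_g$ equals $\mu$. The span of the $\delta_g$ is dense in $\ell^2(\mathcal A)$, so the spectrum of $\mathcal L_{\mathcal X}$ is exactly $\operatorname{supp}\mu$. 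Moreover, a point $\lambda$ is an eigenvalue if and only if $\mu(\{\lambda\})>0$, because $\mu(\{\lambda\})=\|E_{\{\lambda\}}\delta_e\|^2$.

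First, set $n=0$ in \eqref{K1_formula} and substitute $\lambda(x)=\frac{1+qs-2\sqrt{qs}\cos x}{q(s+1)}$ in the integral. The map $x\mapsto\lambda(x)$ is a strictly increasing diffeomorphism of $(0,\pi)$ onto $(\lambda(0),\lambda(\pi))$. This exhibits $K_t^{\mathcal X}(0)$ as the Laplace transform of the measure
$$
\frac{\max(0,q-s)}{q+1}\,\delta_{(q+1)/q}+\rho(\lambda)\,\mathrm d\lambda .
$$
Here $\rho$ is the pushforward density: the integrand at $n=0$ divided by $\mathrm d\lambda/\mathrm dx=\frac{2\sqrt{qs}\sin x}{q(s+1)}$, which is a continuous function on $(0,\pi)$. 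Because a finite measure is determined by its Laplace transform, this measure equals $\mu$.

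Second, compute the endpoints. With $q=\mN-1$ and $s=r-1$,
$$
\lambda(0),\ \lambda(\pi)=\frac{1+qs\mp2\sqrt{qs}}{q(s+1)}=1-\frac{q-1\pm2\sqrt{qs}}{qr},
$$
where the first sign in each pair gives $\lambda(0)$. Since $q-1=\mN-2$, these are exactly the stated interval endpoints. The atom sits at $\frac{q+1}{q}=\frac{\mN}{\mN-1}$ and has positive mass exactly when $q>s$, that is, when $\mN>r$.

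Third, and this is the main obstacle, show that $\rho>0$ on the open interval, so that the support of the absolutely continuous part is the whole closed interval. The denominator in \eqref{K1_formula} is a sum of squares and cannot vanish for $x\in(0,\pi)$, because $(s+1)\sin x\neq0$ there. At $n=0$ the numerator bracket is
$$
s\sin x-\sin(-x)=(s+1)\sin x,
$$
using that the middle term vanishes. After dividing by $\mathrm d\lambda/\mathrm dx$, the density is therefore a positive constant times
$$
\frac{(s+1)\sin x}{\bigl((q-1)\sqrt{s/q}+(s-1)\cos x\bigr)^2+(s+1)^2\sin^2x},
$$
which is strictly positive on $(0,\pi)$.

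Consequently $\operatorname{supp}\mu=[\lambda(0),\lambda(\pi)]\cup\{\tfrac{\mN}{\mN-1}\}$, where the point is included only when $\mN>r$. This point always lies outside the interval, since
$$
\lambda(\pi)=\frac{(1+\sqrt{qs})^2}{q(s+1)}<\frac{q+1}{q}
$$
for $s\ge1$; this is equivalent to $2\sqrt{qs}<q+s$ when $q\neq s$, and the atom is only present when $q>s$. Hence the point is an isolated eigenvalue, which gives the corollary.
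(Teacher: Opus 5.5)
Your proposal is correct and follows the paper's route. Like the paper, you read the spectrum off the heat kernel formula of Theorem~\ref{K1}: the interval comes from the range of $\lambda_x$ between its values at $x=0$ and $x=\pi$, and the extra point comes from the discrete term present when $\mN>r$. You supply details the paper leaves implicit: the spectral measure of $\delta_e$, uniqueness of the Laplace transform, strict positivity of the density on $(0,\pi)$, and translation invariance to identify $\operatorname{supp}\mu$ with the spectrum. The one difference is that you certify the eigenvalue through the atom $\mu(\{\mN/(\mN-1)\})>0$, whereas the paper lifts the $\ell^2$-eigenfunction $f_*$ via Proposition~\ref{efct_cov}.
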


The isolated point is an eigenvalue. Proposition~\ref{spectrum_L^pr1} describes the eigenfunctions of the projected Laplacian, and Proposition~\ref{efct_cov} gives their lift to $\ell^2$-eigenfunctions of~$\mathcal{L}_{\mathcal{X}}$.

Closely related spectral information for graphs $\mathcal{X}_{L}^r$ has been obtained by different methods  
 in the work  
of Faraut--Picardello~\cite{MR779540} and in Woess's book on random walks 
~\cite[§9.C]{MR1743100}.

We now turn to the second family of Cayley graphs. 
Let
$$
 \mathcal{Y} =\mathcal Y_{\oG,\oH}
  = \mathrm{Cay}(\mathcal B, S),
$$
where 
$$
\mathcal B=\mathcal B_{\oG,\oH}:=G*H, \quad |G|=\oG+1, \ |H|=\oH+1,
$$
and $S=(G\setminus\{e\}) \sqcup (H\setminus\{e\})$.
The case $\oG=2$ and $\oH=4$ is illustrated in Fig.~\ref{fig:cayley_graph2}. 

Our second main result applies to a slightly more general weighted graph, where we assign positive weights $\mwo$ and $\mwt$ to the edges corresponding to $G$ and $H$, respectively. 
In this case, the normalized Laplacian is given by
\begin{equation}\label{eq:laplacian-Y}
 (\mathcal{L}_{\mathcal{Y}}f)(x)=f(x)-\frac{1}{\oG\mwo+\oH\mwt} \Big(\sum_{a\in G\setminus{\{e\}}}\mwo f(xa) + \sum_{b\in H\setminus{\{e\}}}\mwt f(xb) \Big),\quad \forall x\in \mathcal{B}.
\end{equation}
Define the map
$\pi_e \colon \mathcal B \to\Z$
by 
\begin{equation}\label{CY_proj}
 \pi_e(g)=
\begin{cases} 
0 & \text{if $g=e$} \\
 |g| & \text{if the reduced form of $g$ starts with an element of $H$} \\
 -|g| & \text{if the reduced form of $g$ starts with an element of $G$}.
\end{cases}
\end{equation}

\begin{figure}[h]
\center
\begin{tikzpicture}[scale=0.8]

\tikzset{
  treeedge/.style={line width=0.6pt},
  cliqueedge/.style={gray!90, line width=0.3pt}
}

\coordinate (C0) at (0,0); 
\coordinate (t1) at ($(C0)+(0:1.3)$);
\coordinate (t3) at ($(C0)+(120:1.3)$);
\coordinate (t2) at ($(C0)+(240:1.3)$);

\coordinate (C1) at ($(t1)+(0:1.3)$);
\coordinate (a1) at ($(C1)+(180:1.3)$);
\coordinate (a2) at ($(C1)+(252:1.3)$);
\coordinate (a3) at ($(C1)+(324:1.3)$);
\coordinate (a4) at ($(C1)+(36:1.3)$); 
\coordinate (a5) at ($(C1)+(108:1.3)$); 

\coordinate (C2) at ($(t2)+(240:1.3)$);
\coordinate (b1) at ($(C2)+(60:1.3)$);
\coordinate (b2) at ($(C2)+(132:1.3)$);
\coordinate (b3) at ($(C2)+(204:1.3)$);
\coordinate (b4) at ($(C2)+(276:1.3)$);
\coordinate (b5) at ($(C2)+(348:1.3)$);

\coordinate (C3) at ($(t3)+(120:1.3)$);
\coordinate (c1) at ($(C3)+(-60:1.3)$);
\coordinate (c2) at ($(C3)+(12:1.3)$);
\coordinate (c3) at ($(C3)+(84:1.3)$);
\coordinate (c4) at ($(C3)+(156:1.3)$);
\coordinate (c5) at ($(C3)+(228:1.3)$);

\node at (1.25,-0.25) {\small$e$};

\draw[fill=black] (t1) circle [radius=2pt];
\draw[fill=black] (t2) circle [radius=2pt];
\draw[fill=black] (t3) circle [radius=2pt];

\draw[fill=black] (a1) circle [radius=1.8pt];
\draw[fill=black] (a2) circle [radius=1.8pt];
\draw[fill=black] (a3) circle [radius=1.8pt];
\draw[fill=black] (a4) circle [radius=1.8pt];
\draw[fill=black] (a5) circle [radius=1.8pt];

\draw[fill=black] (b1) circle [radius=1.8pt];
\draw[fill=black] (b2) circle [radius=1.8pt];
\draw[fill=black] (b3) circle [radius=1.8pt];
\draw[fill=black] (b4) circle [radius=1.8pt];
\draw[fill=black] (b5) circle [radius=1.8pt];

\draw[fill=black] (c1) circle [radius=1.8pt];
\draw[fill=black] (c2) circle [radius=1.8pt];
\draw[fill=black] (c3) circle [radius=1.8pt];
\draw[fill=black] (c4) circle [radius=1.8pt];
\draw[fill=black] (c5) circle [radius=1.8pt];

\draw(t1)--(t2)--(t3)--(t1); 

\draw(t1)--(a2); \draw(t1)--(a3); \draw(t1)--(a4); \draw(t1)--(a5);
\draw[cliqueedge](a2)--(a3); \draw[cliqueedge](a2)--(a4); \draw[cliqueedge](a2)--(a5); 
\draw[cliqueedge](a3)--(a4); \draw[cliqueedge](a3)--(a5); 
\draw[cliqueedge](a4)--(a5); 

\draw(t2)--(b2); \draw(t2)--(b3); \draw(t2)--(b4); \draw(t2)--(b5);
\draw[cliqueedge](b2)--(b3); \draw[cliqueedge](b2)--(b4); \draw[cliqueedge](b2)--(b5); 
\draw[cliqueedge](b3)--(b4); \draw[cliqueedge](b3)--(b5); 
\draw[cliqueedge](b4)--(b5); 

\draw(t3)--(c2); \draw(t3)--(c3); \draw(t3)--(c4); \draw(t3)--(c5);
\draw[cliqueedge](c2)--(c3); \draw[cliqueedge](c2)--(c4); \draw[cliqueedge](c2)--(c5); 
\draw[cliqueedge](c3)--(c4); \draw[cliqueedge](c3)--(c5); 
\draw[cliqueedge](c4)--(c5); 

\coordinate (a2t2) at ($(a2)+(222:1)$);
\coordinate (a2t3) at ($(a2)+(282:1)$);

\node at ($(a2t2)+(222:0.3)$){\tiny$...$};
\node at ($(a2t3)+(302:0.25)$){\tiny$...$};

\coordinate (a3t2) at ($(a3)+(294:1)$);
\coordinate (a3t3) at ($(a3)+(354:1)$);

\node at ($(a3t2)+(270:0.25)$){\tiny$...$};
\node at ($(a3t3)+(14:0.25)$){\tiny$...$};

\coordinate (a4t2) at ($(a4)+(6:1)$);
\coordinate (a4t3) at ($(a4)+(66:1)$);

\node at ($(a4t2)+(-20:0.25)$){\tiny$...$};
\node at ($(a4t3)+(90:0.25)$){\tiny$...$};

\coordinate (a5t2) at ($(a5)+(78:1)$);
\coordinate (a5t3) at ($(a5)+(138:1)$);

\node at ($(a5t2)+(78:0.25)$){\tiny$...$};
\node at ($(a5t3)+(158:0.3)$){\tiny$...$};

\draw[fill=black] (a2t2) circle [radius=1.3pt];
\draw[fill=black] (a2t3) circle [radius=1.3pt];
\draw(a2)--(a2t2)--(a2t3)--(a2); 

\draw[fill=black] (a3t2) circle [radius=1.3pt];
\draw[fill=black] (a3t3) circle [radius=1.3pt];
\draw(a3)--(a3t2)--(a3t3)--(a3); 

\draw[fill=black] (a4t2) circle [radius=1.3pt];
\draw[fill=black] (a4t3) circle [radius=1.3pt];
\draw(a4)--(a4t2)--(a4t3)--(a4); 

\draw[fill=black] (a5t2) circle [radius=1.3pt];
\draw[fill=black] (a5t3) circle [radius=1.3pt];
\draw(a5)--(a5t2)--(a5t3)--(a5);

\coordinate (b2t2) at ($(b2)+(102:1)$);
\coordinate (b2t3) at ($(b2)+(162:1)$);

\node at ($(b2t2)+(92:0.3)$){\tiny$...$};
\node at ($(b2t3)+(182:0.25)$){\tiny$...$};

\coordinate (b3t2) at ($(b3)+(174:1)$);
\coordinate (b3t3) at ($(b3)+(234:1)$);

\node at ($(b3t2)+(154:0.25)$){\tiny$...$};
\node at ($(b3t3)+(264:0.25)$){\tiny$...$};

\coordinate (b4t2) at ($(b4)+(246:1)$);
\coordinate (b4t3) at ($(b4)+(306:1)$);

\node at ($(b4t2)+(246:0.25)$){\tiny$...$};
\node at ($(b4t3)+(326:0.25)$){\tiny$...$};

\coordinate (b5t2) at ($(b5)+(318:1)$);
\coordinate (b5t3) at ($(b5)+(18:1)$);

\node at ($(b5t2)+(278:0.25)$){\tiny$...$};

\draw[fill=black] (b2t2) circle [radius=1.3pt];
\draw[fill=black] (b2t3) circle [radius=1.3pt];
\draw(b2)--(b2t2)--(b2t3)--(b2); 

\draw[fill=black] (b3t2) circle [radius=1.3pt];
\draw[fill=black] (b3t3) circle [radius=1.3pt];
\draw(b3)--(b3t2)--(b3t3)--(b3); 

\draw[fill=black] (b4t2) circle [radius=1.3pt];
\draw[fill=black] (b4t3) circle [radius=1.3pt];
\draw(b4)--(b4t2)--(b4t3)--(b4); 

\draw[fill=black] (b5t2) circle [radius=1.3pt];
\draw[fill=black] (b5t3) circle [radius=1.3pt];
\draw(b5)--(b5t2)--(b5t3)--(b5); 

\coordinate (c2t2) at ($(c2)+(-18:1)$);
\coordinate (c2t3) at ($(c2)+(42:1)$);

\node at ($(c2t3)+(82:0.25)$){\tiny$...$};

\coordinate (c3t2) at ($(c3)+(54:1)$);
\coordinate (c3t3) at ($(c3)+(114:1)$);

\node at ($(c3t2)+(54:0.25)$){\tiny$...$};
\node at ($(c3t3)+(134:0.25)$){\tiny$...$};

\coordinate (c4t2) at ($(c4)+(126:1)$);
\coordinate (c4t3) at ($(c4)+(186:1)$);

\node at ($(c4t2)+(92:0.25)$){\tiny$...$};
\node at ($(c4t3)+(226:0.25)$){\tiny$...$};

\coordinate (c5t2) at ($(c5)+(198:1)$);
\coordinate (c5t3) at ($(c5)+(258:1)$);

\node at ($(c5t2)+(178:0.25)$){\tiny$...$};

\draw[fill=black] (c2t2) circle [radius=1.3pt];
\draw[fill=black] (c2t3) circle [radius=1.3pt];
\draw(c2)--(c2t2)--(c2t3)--(c2); 

\draw[fill=black] (c3t2) circle [radius=1.3pt];
\draw[fill=black] (c3t3) circle [radius=1.3pt];
\draw(c3)--(c3t2)--(c3t3)--(c3); 

\draw[fill=black] (c4t2) circle [radius=1.3pt];
\draw[fill=black] (c4t3) circle [radius=1.3pt];
\draw(c4)--(c4t2)--(c4t3)--(c4); 

\draw[fill=black] (c5t2) circle [radius=1.3pt];
\draw[fill=black] (c5t3) circle [radius=1.3pt];
\draw(c5)--(c5t2)--(c5t3)--(c5); 

\end{tikzpicture}
\caption{The Cayley graph $\mathcal{Y}_{2,4}$. The bold edges correspond to geodesics to the identity.} \label{fig:cayley_graph2}
\end{figure}

\begin{theorem} \label{K2}
The heat kernel function $k_t^{\mathcal Y}$ defined in~\eqref{heat_kernel_det} for the Laplacian~\eqref{eq:laplacian-Y} is given by
$$
{k}_t^{\mathcal Y}(x)=K_t^{\mathcal Y}(\pi_e(x)),
$$ where, for $n\in\Z$,
\begin{multline*}
 K_t^{\mathcal Y}(n) = \frac{1}{\sqrt{|\pi_e^{-1}(n)|}} \Bigg( \sum_{k=1}^2 e^{-t\lambda_k} f_k(0)f_k(n)+ \int_0^\pi \sum_{\epsilon\in\{\pm1\}}  e^{-t\lambda_{\epsilon,x}} F_{\epsilon,x}(0)^T \mathcal{H}_{\epsilon}(x)^{-1} F_{\epsilon,x}(n) \mathrm{d}x \Bigg).
\end{multline*}
Here, $\lambda_k, \lambda_{\epsilon,x} \in \R$, $f_k \colon \Z \to \R$, $F_{\epsilon,x} \colon \Z \to \R^2$, and $2\times 2$ matrix $\mathcal{H}_{\epsilon}(x)$ are given explicitly in~Section~\ref{section2}. 
\end{theorem}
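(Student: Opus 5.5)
The plan is to reduce $\mathcal Y$ to a weighted line by a strong regular covering and solve the resulting 2-periodic Jacobi problem. \textbf{Step 1} checks that $\pi_e\colon\mathcal B\to\Z$ is a strong, regular covering onto a weighted line $\mathcal Z$. We use the stabilizer of $e$ in the automorphism group of the coloured graph, i.e.\ automorphisms permuting the branches at each vertex compatibly with the free-product structure. The fibres are $\pi_e^{-1}(0)=\{e\}$, $|\pi_e^{-1}(n)|=N^{\lceil n/2\rceil}M^{\lfloor n/2\rfloor}$ for $n>0$, and symmetrically for $n<0$ with the roles of $M$ and $N$ swapped. This holds because reduced words alternate between $G$ and $H$.

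For each $g$ with $\pi_e(g)=n$, count its neighbours in each fibre with their weights. Moving along the factor that begins the next letter leads outward. Moving along the factor of the last letter stays in the same fibre, with $M-1$ or $N-1$ loop-like neighbours, or goes back one step. These counts depend only on $n$, which gives regularity. The quotient operator on $\ell^2(\Z)$, conjugated by $\sqrt{|\pi_e^{-1}(n)|}$ to make it symmetric, is a Jacobi matrix with diagonal and off-diagonal coefficients that are 2-periodic for $n\ge1$ and for $n\le-1$ separately, with a defect at $n=0$. By the spectral transfer principle for strong regular coverings stated earlier in the paper, $k_t^{\mathcal Y}(x)=K_t^{\mathcal Z}(0,\pi_e(x))$, suitably normalized by $|\pi_e^{-1}(n)|^{-1/2}$ (the stabilizer averaging spreads the mass uniformly over the fibre). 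This reduces Theorem~\ref{K2} to computing the heat kernel of the symmetric Jacobi operator $J$ on $\Z$ between $0$ and $n$.

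\textbf{Step 2} solves $J$. Group sites into pairs $(2m-1,2m)$ and use a transfer-matrix / Bloch ansatz on each half-line. For $\lambda$ in the band set, take solutions of the form $u(n)=z^{m}v_\pm$ with $|z|=1$, $z=e^{ix}$, $x\in[0,\pi]$. Each $x$ gives two dispersion branches $\lambda_{\epsilon,x}$, $\epsilon=\pm1$, from a $2\times2$ characteristic equation. Their ranges are the two spectral intervals. On $\Z$ each $\lambda$ in the interior of a band has a two-dimensional space of generalized eigenfunctions. We build a basis $F_{\epsilon,x}(n)\in\R^2$ by gluing incoming and outgoing plane waves on the two half-lines through the matching conditions at $n=0$ and $n=\pm1$.

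Isolated eigenvalues $\lambda_1,\lambda_2$ come from the decaying solutions ($|z|<1$ on both sides) that satisfy the matching conditions. We expect exactly two such, giving $\ell^2$ eigenfunctions $f_k$, normalized to norm one. Heuristically, they come from functions supported on cliques, with eigenvalue related to $1+\omega_i/(M\omega_1+N\omega_2)$.

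\textbf{Step 3} derives the spectral decomposition, which we expect to be the main obstacle. Compute the resolvent $(J-\lambda)^{-1}(0,n)$ from the Jost solutions by the Green's function formula $\psi_-(\min)\psi_+(\max)/W$. Then apply Stone's formula, taking the jump of the imaginary part across the bands. Change variables from $\lambda$ to $x$ on each branch, and write the resulting $2\times2$ density as $F_{\epsilon,x}(0)^T\mathcal H_\epsilon(x)^{-1}F_{\epsilon,x}(n)$, where $\mathcal H_\epsilon(x)$ is the Gram (Wronskian) matrix of the two generalized eigenfunctions. The poles of the resolvent outside the bands give the residues $f_k(0)f_k(n)$.

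Finally, $e^{-tJ}(0,n)=\sum_k e^{-t\lambda_k}f_k(0)f_k(n)+\int_0^\pi\sum_\epsilon\cdots\,\mathrm{d}x$. Undoing the symmetrization by $|\pi_e^{-1}(n)|^{-1/2}$ gives the formula of the theorem. The hard part is the bookkeeping of the matching at the origin, since the two half-lines have different periodic structures because $M\neq N$. The same bookkeeping is needed to verify that no further embedded or edge eigenvalues occur. We would check this by confirming the decomposition reproduces $\delta_0$ at $t=0$.
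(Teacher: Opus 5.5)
Your Step~1 and the final lift via \eqref{eq6} are the paper's. Quotienting once by the full colour-preserving stabilizer of $e$, whose orbits are exactly the finite fibres of $\pi_e$, is even a tidier way to apply Proposition~\ref{Prop2} than the paper's infinite iteration of $S_M$- and $S_N$-quotients. The real difference is Step~3.

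\textbf{What the paper does.} It never uses the resolvent. It writes $f^{(1)}_{\epsilon,x},f^{(2)}_{\epsilon,x}$ explicitly and finds $\lambda_1,\lambda_2$ by a case analysis ($A=0$, $B=0$, $AB\neq0$). It obtains $\mathcal{H}_\epsilon(x)$ as the coefficient of $\delta(x-y)$ in distributional inner products computed with Poisson-summation identities. It then proves completeness separately (Appendix~\ref{appC}) by Fourier-expanding $\bigl((M+N+2\sqrt{MN}\cos x)(MN+1-2\sqrt{MN}\cos x)\bigr)^{-1}$, and this is what certifies that nothing else is in the spectrum.

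\textbf{What your route buys.} Stone's formula gives the spectral measure of $\delta_0$ directly, so completeness is automatic and only the row $m=0$ is ever needed. A transfer matrix built from single-site steps also sidesteps the degenerate cases $A=0$, $B=0$, because the couplings $\sqrt M\omega_1,\sqrt N\omega_2$ never vanish.

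\textbf{What it costs.} You must prove that $W(\lambda+i0)\neq0$ on the open bands and that band edges carry no mass. You must also prove that $W$ has exactly the two real zeros $\lambda_1,\lambda_2$ off the bands; ``we expect exactly two'' is not yet an argument. Finally, you must identify your $2\times2$ density with $\mathcal{H}_\epsilon(x)^{-1}$. Note that $\mathcal{H}_\epsilon(x)$ is a distributional Gram matrix, not a Wronskian matrix: two solutions at one energy have a scalar Wronskian. Either compute your density explicitly and compare with \eqref{H_inv}, or use the general fact that the spectral density in a real basis $F$ of generalized eigenfunctions is the inverse of their Gram matrix. Since $F^T\mathcal{H}^{-1}F$ is invariant under $F\mapsto PF$, $\mathcal{H}\mapsto P\mathcal{H}P^T$, you may then work in whatever real basis your Jost solutions provide.

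Discard the clique heuristic. The eigenvalues are $\lambda_1=(N+1)\omega_2/\kappa$ and $\lambda_2=1+(\omega_1+\omega_2)/\kappa$, not $1+\omega_i/\kappa$. Their eigenfunctions decay geometrically along $Q$, with ratios $-\sqrt{M/N}$ and $1/\sqrt{MN}$ between consecutive even sites. In fact the tree-of-cliques structure excludes finitely supported eigenfunctions on $\mathcal{Y}$ altogether: at a leaf of the support, the eigenvalue equation fails on the outer clique through that leaf.
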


\begin{corollary}\label{Cor_spec2}
 The spectrum of the Laplacian $\mathcal{L}_{\mathcal Y}$ on $\ell^2(\mathcal B)$ defined in~\eqref{eq:laplacian-Y} is the following closed subset of $\R$, see Fig.~\ref{fig:specL2}:
$$
\Big[c-\frac{R_0}{2\kappa},c-\frac{R_{\pi}}{2\kappa}\Big] \sqcup \{\lambda_1\} \sqcup \Big[c+\frac{R_{\pi}}{2\kappa},c+\frac{R_0}{2\kappa}\Big] \sqcup \{\lambda_2\} 
$$
where
$$
 \kappa:= M\mwo+N\mwt, \quad c=\frac{(\oG+1)\mwo+(\oH+1)\mwt}{2\kappa},
$$
\begin{equation*}
R_x=\sqrt{\big((\oG+1)\mwo-(\oH+1)\mwt\big)^2+4(\oG+\oH)\mwo\mwt+8\sqrt{\oG\oH}\mwo\mwt\cos x},
\end{equation*}
and
\begin{equation*}
 \lambda_1=\frac{(\oH+1)\mwt}{\kappa}, \quad
  \lambda_2=1+\frac{\mwo+\mwt}{\kappa}.
\end{equation*}
\end{corollary}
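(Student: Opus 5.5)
The plan is to read the spectrum off the heat kernel formula of Theorem~\ref{K2}, or more precisely off the spectral decomposition of the projected operator on the weighted line $\Z$ that underlies it. Following the covering approach, the map $\pi_e$ is a strong regular covering of $\mathcal Y$ onto a weighted line. The Laplacian on the line is a periodic Jacobi-type operator with period two, since the vertices alternate between $G$-type and $H$-type neighbourhoods, together with a defect near the origin. The spectral transfer principle for strong regular coverings says the spectrum of $\mathcal L_{\mathcal Y}$ is the spectrum of the projected operator, once we account for the $\ell^2$ lifts of its eigenfunctions. So the task is to compute the spectrum of the projected operator, conjugated by $\sqrt{|\pi_e^{-1}(n)|}$ so that it becomes self-adjoint on $\ell^2(\Z)$.

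\textbf{Continuous part.} Away from the origin, write the operator on two-site cells. Take a Bloch ansatz $F(n+2)=e^{ix}F(n)$, with $x\in[0,\pi]$ and the two signs $\epsilon=\pm1$ producing the two branches. This gives a $2\times 2$ symbol matrix whose characteristic polynomial is quadratic in $\lambda$. Its roots are $\lambda_{\epsilon,x}=c+\epsilon R_x/(2\kappa)$. Here the trace gives the centre $c=((M+1)\omega_1+(N+1)\omega_2)/(2\kappa)$, because the diagonal term $1$ is shifted by the clique self-loop contributions, and the discriminant gives $R_x^2$. As $x$ runs over $[0,\pi]$, the function $R_x$ decreases monotonically from $R_0$ to $R_\pi$, since $\cos x$ appears with a positive coefficient. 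Hence the two bands are exactly $[c-R_0/(2\kappa),c-R_\pi/(2\kappa)]$ and $[c+R_\pi/(2\kappa),c+R_0/(2\kappa)]$. In Theorem~\ref{K2} these bands carry the absolutely continuous part of the kernel, because the density $F_{\epsilon,x}(0)^T\mathcal H_\epsilon(x)^{-1}F_{\epsilon,x}(n)$ is not identically zero on them. I also need $R_\pi>0$ so that the bands are disjoint. This holds because $R_\pi^2=((M+1)\omega_1-(N+1)\omega_2)^2+4\omega_1\omega_2(\sqrt M-\sqrt N)^2$, which is positive since $M\neq N$.

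\textbf{Point part.} The terms $e^{-t\lambda_k}f_k(0)f_k(n)$ in Theorem~\ref{K2} come from $\ell^2$ eigenfunctions $f_k$ of the projected operator. I would verify directly that $\lambda_1=(N+1)\omega_2/\kappa$ and $\lambda_2=1+(\omega_1+\omega_2)/\kappa$ have decaying solutions, using the explicit $f_k$ from Section~\ref{section2}. Concretely, each solution should decay geometrically with ratio $-1/\sqrt{M}$ or $-1/\sqrt{N}$ type factors, with the decay rate depending on the side of the origin. Then Proposition~\ref{efct_cov} lifts them to genuine eigenfunctions of $\mathcal L_{\mathcal Y}$. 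I would also check that $\lambda_1$ lies in the gap $(c-R_\pi/(2\kappa),\,c+R_\pi/(2\kappa))$ and that $\lambda_2$ lies above $c+R_0/(2\kappa)$. This amounts to squaring and comparing with $R_x^2$, which is elementary.

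\textbf{Completeness.} Since the heat semigroup determines the spectral measure, and $k_t^{\mathcal Y}(e)=\langle e^{-t\mathcal L}\delta_e,\delta_e\rangle$ together with its translates determines the spectral measures of all $\delta_g$, the spectrum is the closed support of the measures displayed in Theorem~\ref{K2}, taken over all $n$. So nothing else appears.

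The main obstacle is the step showing each point really is in the spectrum, i.e.\ that $f_k(0)f_k(n)$ is not identically zero and $f_k\in\ell^2$. The analogous step for the bands is showing the integrand density does not vanish on any subinterval. I would handle the first by direct inspection of the explicit $f_k$. For the second, I would use analyticity in $x$: the density is analytic and not identically zero, so it vanishes at most at isolated points, and the support is the full band.
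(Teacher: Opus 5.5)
Your proposal is correct and follows essentially the paper's route: read the spectrum off the spectral resolution on the weighted line behind Theorem~\ref{K2}. The bands are the ranges of $c\pm R_x/(2\kappa)$ for $x\in[0,\pi]$, the two points come from the $\ell^2$ eigenfunctions $f_1,f_2$ (lifted by Proposition~\ref{efct_cov}), and your extra checks fill in details the paper leaves implicit ($R_\pi>0$, $\lambda_1$ strictly inside the gap, $\lambda_2$ above the upper band, $f_k(0)=\beta_k\neq 0$, positivity of the $n=0$ density). One small slip, harmless since you then use the explicit $f_k$: the decay ratios per two sites are $-\sqrt{M/N}$ for $f_1$ and $1/\sqrt{MN}$ for $f_2$, the same on both sides of the origin, rather than ratios of type $-1/\sqrt{M}$ or $-1/\sqrt{N}$ that depend on the side.
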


In the unweighted case ($\mwo=\mwt$), Corollary~\ref{Cor_spec2} recovers the spectral result of Cartwright--Soardi~\cite{MR820357}, after the transformation 
from the adjacency operator to the normalized Laplacian, while in general it corresponds to the spectrum obtained by Cartwright--Soardi in~\cite[§4.3]{MR846137}. Related spectral results for weighted random walks on free products of cyclic groups were obtained by Aomoto--Kato~\cite{MR948999}.

\begin{figure}[h]
\center
\begin{tikzpicture}[xscale=6,line/.style = {draw,ultra thick, shorten >=-2pt, shorten <=-2pt}]

\draw[-latex] (0,0) -- (2,0);


\node at (0.8,-0.4) {\small$c$}; \draw (0.8,0.15) -- (0.8,-0.15);

\node at (0.88,-0.4) {$\lambda_1$};
\node at (1.6,-0.4) {$\lambda_2$}; 

\draw[line, {Circle[length=4pt]}-{Circle[length=4pt]}]  (0.1,0) -- (0.6,0);
\draw[line, {Circle[length=4pt]}-{Circle[length=4pt]}]  (0.88,0) -- (0.88,0);

\draw[line, {Circle[length=4pt]}-{Circle[length=4pt]}]  (1,0) -- (1.5,0);
\draw[line, {Circle[length=4pt]}-{Circle[length=4pt]}]  (1.6,0) -- (1.6,0);

\end{tikzpicture}
\caption{Schematic form of the spectrum of $\mathcal{L}_\mathcal{Y}$ for $\mwo=\mwt$: the continuous spectrum consists of two intervals and two discrete eigenvalues. 
The diagram is not to scale.}  \label{fig:specL2} 
\end{figure}

{\bf Outline.} 
The paper is organized as follows.

In Section 2, following~\cite{MR5101585}, we briefly review the necessary background on weighted graph coverings, Laplacians and heat kernels.

In Section 3, we study free products of finite groups of equal order.
After describing the associated Cayley graph and its projection onto a weighted half-line,
we determine the spectrum of the projected Laplacian and derive an explicit formula
for the heat kernel.

In Section 4, we study free products of two finite groups of unequal orders, allowing different positive weights on the two types of edges. By quotienting the Cayley graph by its symmetries, we obtain a weighted line whose projected Laplacian has periodic coefficients of period two. We determine its discrete and continuous spectra, obtain the spectral resolution, and use the covering relation to derive the heat kernel on the original Cayley graph.

\section{Preliminaries}\label{sec2}
In this section, we briefly review the necessary background. We recall the notions of weighted graphs and coverings that will be used throughout. In particular, we present the quotient construction of coverings via group actions, and the lifting relation for heat kernels under coverings.
We refer to our previous work \cite{MR5101585} for further details.

\subsection{Weighted graphs, Laplacians and heat kernels} 
\begin{definition}
 A \textit{weighted graph} is a set of \textit{vertices} $V$ equipped with a non-negative symmetric \textit{weight function} 
 $$
 w\colon V\times V\to \R_{\geq 0},\quad w(u,v)=w(v,u)\quad \forall u,v \in V.
 $$
 \end{definition}
 In a weighted graph, an \textit{edge} is a pair of vertices with strictly positive weight, and two vertices connected by an edge are called \textit{neighbours}. The weighted graph is called \emph{connected} if for any vertices $u$ and $v$, there exists a finite sequence of edges connecting $u$ and $v$. We also say that a weighted graph contains a \emph{loop} at $v$, if $w(v,v)>0$.
 
 The \textit{degree} of a vertex $u\in V$ of a weighted graph $(V,w)$ is defined as 
 $$
 d_u=\sum_{v\in V}w(u,v).
 $$
 Throughout this work, all graphs are assumed connected and locally finite.

The \emph{matrix coefficients} $A(u,v)$ of a linear map (operator) $A\colon \C^V \to \C^V$ are defined by 
$$
(Af)(u)=\sum_{v\in V}A(u,v)f(v).
$$

\begin{definition} \label{Laplacian}
 Given a weighted graph $(V,w)$, the \textit{normalized Laplacian} $\mathcal{L}$ of $(V,w)$ is an operator with the matrix coefficients  
\begin{equation}\label{normLaplacian}
  \mathcal{L}(u,v)=\delta_{u,v}-\frac{w(u,v)}{\sqrt{d_ud_v}}.
\end{equation}
\end{definition}
This is a self-adjoint (bounded) operator on the Hilbert space $\ell^2(V)$.
We write
\begin{equation}\label{operatorM}
 \mathcal{L}=I-\mathcal{M},
\end{equation}
where 
\begin{equation}\label{operatorM_mat}
 \mathcal{M}(u,v)=\frac{w(u,v)}{\sqrt{d_ud_v}}.
\end{equation}
The operators $\mathcal{L}$ and $\mathcal{M}$ have the same eigenvectors. If $\mu$ is an eigenvalue of $\mathcal{M}$, then $1-\mu$ is an eigenvalue of $\mathcal{L}$.

\begin{definition}\label{ht}
 Given a weighted graph $(V,w)$, the \textit{heat operator} $h_t$ of $(V,w)$ is an operator defined for $t\geq 0$ as 
 $$
 h_t=e^{-t\mathcal{L}}=\sum_{k=0}^{\infty}\frac{(-t)^k}{k!}\mathcal{L}^k.
 $$
 Its matrix coefficients $h_t(x,y)$ form the heat kernel.
\end{definition}
 
\begin{remark}
For a Cayley graph $\Gamma(G)$ of a group $G$, left invariance implies that
\begin{equation}\label{kernel}
 h_t(x,y)=k_t(y^{-1}x), 
\end{equation}
where $k_t(x):=h_t(x,e)$. Thus, the heat kernel is determined by the function $k_t$.
\end{remark}

\subsection{Coverings of weighted graphs}\label{sec3}
We recall the notion of covering of weighted graphs, following~\cite{MR1667452,MR5101585}.

\begin{definition}
Given two weighted graphs $(\tilde{V},\tilde{w})$, $(V,w)$, a map $\pi \colon \tilde{V} \to V$ is called a \textit{covering of weighted graphs} if $\pi(\tilde{V})=V$ and if there exists a function $\lambda: V \to \R_{>0}$ such that for any $u\in V$ and any $y\in \tilde{V}$, the following equation holds
 \begin{equation}\label{new_def}
\sum_{x\in \pi^{-1}(u)}\tilde{w}(x,y)=\lambda(\pi(y)) w(u,\pi(y)).
\end{equation}
We say that $(\tilde{V},\tilde{w})$ \textit{covers} $(V,w)$ through $\pi$. 
\end{definition} 

\begin{remark}\label{new_def_finite_fibers}
For a covering of (connected) weighted graphs $\pi \colon \tilde{V} \to V$, if $\pi^{-1}(u)$ is a finite set for any $u\in V$, then $\lambda(u)=\frac{c}{|\pi^{-1}(u)|}$ for some constant $c>0$.

Any topological covering is a covering of weighted graphs with $\lambda=1$. However, a covering of weighted graphs is not necessarily a topological covering since the preimages (fibers) of vertices can have different cardinalities.
\end{remark}

A more specific class of coverings will be especially useful for our purposes.

\begin{definition}
Given two weighted graphs $(\tilde{V},\tilde{w})$ and $(V,w)$, we say $(\tilde{V},\tilde{w})$ \textit{covers} $(V,w)$ \textit{strongly} and \textit{regularly} if there exists a vertex $u_0\in V$, called \textit{distinguished vertex}, such that, for any vertex $x\in \tilde{V}$ there exists a covering of weighted graphs $\pi_x \colon (\tilde{V},\tilde{w})\to (V,w)$ such that $\pi_x^{-1}(u_0)=\{x\}$.
\end{definition}

\subsection{Quotient weighted graphs}
An \emph{automorphism} of a weighted graph $(V,w)$ is a bijective map $\sigma\colon V\to V$ such that $w(\sigma(u),\sigma(v))=w(u,v)$ for any $u,v\in V$.
The construction of quotient weighted graphs is based on group actions by automorphisms of weighted graphs.

\begin{definition}
 Let $(\tilde{V},\tilde{w})$ be a weighted graph and $G\subset \operatorname{Aut}(\tilde{V},\tilde{w})$ a subgroup such that the orbit $Gx$ of $x$ is a finite set for any $x\in\tilde{V}$. The \textit{quotient weighted graph} of the weighted graph $(\tilde{V},\tilde{w})$ with respect to the group $G$ is a weighted graph $(V,w)$  defined as
 
\begin{equation} \label{qgraph}
 V=\tilde{V}/G, \quad w(u,v)=\sum_{\substack{x\in u\\ y\in v}}\tilde{w}(x,y) \quad \forall u,v\in V.
\end{equation}
\end{definition}

\begin{proposition}{\cite[Proposition 2.5]{MR5101585}}
\label{Prop2}
 Let $(\tilde{V},\tilde{w})$ be a weighted graph, $G\subset \operatorname{Aut}(\tilde{V},\tilde{w})$ a subgroup such that, for any $x\in\tilde{V}$, its orbit $Gx$  is a finite set  and $(V,w)$ the \textit{quotient weighted graph} of $(\tilde{V},\tilde{w})$ with respect to the group $G$. Then, the canonical projection map to the quotient space $\pi\colon\tilde{V}\to V$ is a covering of weighted graphs.
\end{proposition}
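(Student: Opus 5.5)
We have to prove Proposition~\ref{Prop2}: the canonical projection $\pi\colon\tilde V\to V=\tilde V/G$ is a covering of weighted graphs. The plan is to verify the defining identity \eqref{new_def} directly, with the explicit choice
$$
\lambda(u):=\frac{1}{|u|},\qquad u\in V,
$$
where $|u|$ is the cardinality of the orbit $u=Gx$. This is finite by hypothesis and positive, and it matches the shape predicted in Remark~\ref{new_def_finite_fibers}. Surjectivity of $\pi$ is immediate, since every orbit is nonempty.

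Fix $u\in V$ and $y\in\tilde V$, and set $v=\pi(y)=Gy$. The right-hand side of \eqref{new_def} is
$$
\frac{1}{|v|}\,w(u,v)=\frac{1}{|v|}\sum_{y'\in v}\sum_{x\in u}\tilde w(x,y').
$$
So it suffices to show that the inner function $F(y'):=\sum_{x\in u}\tilde w(x,y')$ is constant on the orbit $v$. Averaging a constant over $v$ then gives exactly $F(y)=\sum_{x\in\pi^{-1}(u)}\tilde w(x,y)$, which is the left-hand side.

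The constancy is where the automorphism property enters. For $y'=\sigma(y)$ with $\sigma\in G$, the map $x\mapsto\sigma(x)$ is a bijection of the orbit $u$ onto itself, since $\sigma$ permutes $G$-orbits and fixes each one. Combined with $\tilde w(\sigma x,\sigma y)=\tilde w(x,y)$, this gives $F(\sigma y)=\sum_{x\in u}\tilde w(\sigma x,\sigma y)=F(y)$. Every element of $v$ has this form, so $F$ is constant on $v$.

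The only points needing care are the following. All sums are finite, because orbits are finite and the graph is locally finite. The weight $w(u,v)$ is symmetric, as required of a weighted graph. The quotient is connected, being the image of a connected graph. I expect no real obstacle; the one step to state carefully is that $G$ preserves each orbit setwise, which justifies the reindexing of the sum.
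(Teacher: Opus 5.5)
Your proof is correct. Each $\sigma\in G$ maps every orbit $u$ bijectively onto itself and preserves $\tilde w$, so $y\mapsto\sum_{x\in u}\tilde w(x,y)$ is constant on $G$-orbits, and averaging over $v=\pi(y)$ gives \eqref{new_def} with $\lambda(v)=1/|v|$, consistent with Remark~\ref{new_def_finite_fibers}. The paper does not reprove this statement but quotes it from the author's earlier work, so there is no proof here to compare against; your direct verification is the natural argument for it.
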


\subsection{Eigenfunctions and heat kernels under coverings}
The next propositions describe the lifting of eigenfunctions and the corresponding relation between heat kernels under coverings.
\begin{proposition}{\cite[Proposition 4.1]{MR5101585}}\label{efct_cov}
 Let $(\tilde{V},\tilde{w})$ cover $(V,w)$ through $\pi$ with associated function $\lambda: V \to \R_{>0}$, and assume that all fibers $\pi^{-1}(v)$ are finite. Let $\tilde{\mathcal{L}}$ and $\mathcal{L}$ denote the corresponding (normalized) Laplacians. If $f\in \ell^2(V)$ is an eigenfunction of $\mathcal{L}$ with  eigenvalue $\mu$, then the pull-back $\pi^{*}\sqrt{\lambda}f \in \ell^2(\tilde{V})$ is an eigenfunction of $\tilde{\mathcal{L}}$ with the same eigenvalue $\mu$. Explicitly, 
 $$
 (\pi^{*}\sqrt{\lambda}f)(x)=\sqrt{\lambda(\pi(x))}f(\pi(x)),
 $$
 and 
 the function $\lambda$ is given by
$$
\lambda(v)=\frac{c}{|\pi^{-1}(v)|}
$$
 for a constant $c>0$, as described in Remark~\ref{new_def_finite_fibers}.
\end{proposition}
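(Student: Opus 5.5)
The statement to prove is Proposition~\ref{efct_cov}: eigenfunctions pull back under coverings, after rescaling by $\sqrt{\lambda}$. The plan is a direct computation with the matrix coefficients \eqref{normLaplacian}, working with $\mathcal{M}=I-\mathcal{L}$ rather than $\mathcal{L}$.

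First I relate the degrees in the two graphs. Sum \eqref{new_def} over all $u\in V$. Since the fibers $\pi^{-1}(u)$ partition $\tilde V$, this gives, for every $y\in\tilde V$,
$$
\tilde d_y=\sum_{u\in V}\lambda(\pi(y))\,w(u,\pi(y))=\lambda(\pi(y))\,d_{\pi(y)}.
$$
The formula $\lambda(v)=c/|\pi^{-1}(v)|$ is exactly Remark~\ref{new_def_finite_fibers}, and I take it as given.

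Next, set $g=\pi^*\sqrt{\lambda}f$ and fix $y\in\tilde V$ with $v=\pi(y)$. Grouping the sum over $x\in\tilde V$ by fibers and using that $g$ is constant on each fiber,
$$
(\tilde{\mathcal M}g)(y)=\sum_{u\in V}\sqrt{\lambda(u)}f(u)\sum_{x\in\pi^{-1}(u)}\frac{\tilde w(x,y)}{\sqrt{\tilde d_x\tilde d_y}}.
$$
Every $x$ in the fiber over $u$ has $\tilde d_x=\lambda(u)d_u$, so the denominator is the constant $\sqrt{\lambda(u)d_u\,\lambda(v)d_v}$ on that fiber. By \eqref{new_def} the inner sum of weights equals $\lambda(v)w(u,v)$. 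Hence
$$
(\tilde{\mathcal M}g)(y)=\sum_{u}\sqrt{\lambda(u)}f(u)\frac{\lambda(v)w(u,v)}{\sqrt{\lambda(u)\lambda(v)d_ud_v}}=\sqrt{\lambda(v)}\,(\mathcal Mf)(v).
$$
If $\mathcal{L}f=\mu f$, then $\mathcal Mf=(1-\mu)f$, and so $\tilde{\mathcal M}g=(1-\mu)g$, i.e.\ $\tilde{\mathcal L}g=\mu g$. Local finiteness makes all these sums finite, so the rearrangements are harmless.

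It remains to check that $g\in\ell^2(\tilde V)$. We have
$$
\|g\|^2=\sum_{v}|\pi^{-1}(v)|\,\lambda(v)\,|f(v)|^2=c\,\|f\|^2<\infty.
$$
This is the one place where the specific form $\lambda(v)=c/|\pi^{-1}(v)|$ is needed, and it shows why the factor $\sqrt\lambda$ appears: it makes $f\mapsto\pi^*\sqrt\lambda f$ a scalar multiple of an isometry. The main point needing care is the degree identity $\tilde d_x=\lambda(\pi(x))d_{\pi(x)}$. It guarantees that all $x$ in a fiber have the same degree, which is what lets the denominator be pulled out of the inner sum. The rest is bookkeeping.
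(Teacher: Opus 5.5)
Your proof is correct and complete. The degree identity $\tilde d_x=\lambda(\pi(x))\,d_{\pi(x)}$, the fiberwise evaluation $\tilde{\mathcal M}(\pi^*\sqrt{\lambda}f)=\pi^*\sqrt{\lambda}\,\mathcal M f$, and the norm identity $\|\pi^*\sqrt{\lambda}f\|^2=c\,\|f\|^2$ are exactly what is needed. The Remark you rely on is itself a one-line check: summing \eqref{new_def} over $y\in\pi^{-1}(v)$ and using the symmetry of $\tilde w$ gives $|\pi^{-1}(v)|\lambda(v)=|\pi^{-1}(u)|\lambda(u)$ whenever $w(u,v)>0$, and connectedness finishes it. The paper does not reprove this proposition but imports it from \cite{MR5101585}, and your direct matrix-coefficient computation is the natural argument for it.
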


\begin{proposition}{\cite[Proposition 4.2]{MR5101585}} \label{rel_covering_heat_kernel}
Let $(\tilde{V},\tilde{w})$ cover $(V,w)$ through $\pi$ with associated function $\lambda: V \to \R_{>0}$. Let $\tilde{h}_t$ and $h_t$ denote the corresponding heat kernels.
 Then, for any $u,v\in V$ and any $y\in\pi^{-1}(v)$, one has the equality
\begin{equation}\label{eq5}
\sum_{x\in\pi^{-1}(u)}\tilde{h}_t(x,y)=\sqrt{\frac{\lambda(v)}{\lambda(u)}}h_t(u,v).
\end{equation}
In particular, if the covering is strong and regular and $u_0\in V$ is a distinguished vertex with $\pi_x^{-1}(u_0)=\{x\}$, then, for any $v\in V$ and $y\in\pi_x^{-1}(v)$, we have
 \begin{equation}\label{new_hk}
 \tilde{h}_t(x,y)=\sqrt{\frac{\lambda(v)}{\lambda(u_0)}}h_t(u_0,v)=\sqrt{\frac{\lambda(\pi_x(y))}{\lambda(\pi_x(x))}}h_t(\pi_x(x),\pi_x(y)).
 \end{equation}
\end{proposition}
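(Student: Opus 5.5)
The plan is to prove \eqref{eq5} by constructing a ``push-forward'' operator that intertwines the two transition operators $\tilde{\mathcal M}$ and $\mathcal M$ from \eqref{operatorM}. I would then pass to the heat operators through the power series $h_t=e^{-t}\sum_{k\ge0}\frac{t^k}{k!}\mathcal M^k$. This series follows from Definition~\ref{ht} and $\mathcal L=I-\mathcal M$, and all of its terms have nonnegative matrix coefficients.

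\textbf{Step 1 (degrees).} Fix $y\in\tilde V$ and sum the covering identity \eqref{new_def} over all $u\in V$. The fibers $\pi^{-1}(u)$ partition $\tilde V$, so this gives
\[
\tilde d_y=\lambda(\pi(y))\,d_{\pi(y)} .
\]

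\textbf{Step 2 (intertwining).} For finitely supported $g$ on $\tilde V$, define
\[
(Sg)(u)=\sqrt{\lambda(u)}\sum_{x\in\pi^{-1}(u)}g(x).
\]
Using the expression for $\tilde{\mathcal M}(x,y)$ from \eqref{operatorM_mat}, Step 1, and then \eqref{new_def} to perform the sum over $x\in\pi^{-1}(u)$, I compute, with $v=\pi(y)$:
\[
(S\tilde{\mathcal M}g)(u)=\sqrt{\lambda(u)}\sum_{y}g(y)\,\frac{\lambda(v)\,w(u,v)}{\sqrt{\lambda(u)d_u\lambda(v)d_v}}=\sum_{v}\mathcal M(u,v)\sqrt{\lambda(v)}\sum_{y\in\pi^{-1}(v)}g(y)=(\mathcal M Sg)(u).
\]
The weight $\sqrt{\lambda}$ in $S$ is chosen exactly so that the $\lambda$-factors cancel. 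Local finiteness keeps $\tilde{\mathcal M}^k\delta_y$ finitely supported. Iterating therefore gives $S\tilde{\mathcal M}^k\delta_y=\mathcal M^kS\delta_y=\sqrt{\lambda(v)}\,\mathcal M^k\delta_v$. Evaluated at $u$, this reads
\[
\sum_{x\in\pi^{-1}(u)}\tilde{\mathcal M}^k(x,y)=\sqrt{\frac{\lambda(v)}{\lambda(u)}}\,\mathcal M^k(u,v)\quad\text{for every }k\ge0 .
\]

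\textbf{Step 3 (heat kernels).} Multiply the identity of Step 2 by $e^{-t}t^k/k!$ and sum over $k$. All terms are nonnegative, so Tonelli allows interchanging the sum over $k$ with the (possibly infinite) sum over the fiber. The right-hand side converges because $|\mathcal M^k(u,v)|\le\|\mathcal M\|^k\le1$. This yields \eqref{eq5}.

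For the strong and regular case, apply \eqref{eq5} to the covering $\pi_x$ with $u=u_0$. Since $\pi_x^{-1}(u_0)=\{x\}$, the fiber sum reduces to the single term $\tilde h_t(x,y)$, which gives \eqref{new_hk}.

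The only delicate point is the interchange of infinite sums when fibers are infinite. I expect nonnegativity of all coefficients together with the bound $\|\mathcal M\|\le1$ (true since $\mathcal L$ has spectrum in $[0,2]$) to settle it.
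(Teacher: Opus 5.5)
Your proof is correct and complete. The paper itself does not prove this statement; it imports it from the author's earlier work, so there is no in-paper argument to compare yours against step by step.

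Your route is self-contained and has three steps:
\begin{itemize}
\item the degree identity $\tilde d_y=\lambda(\pi(y))\,d_{\pi(y)}$, obtained by summing \eqref{new_def} over $u$;
\item the intertwining $S\tilde{\mathcal M}=\mathcal M S$ on finitely supported functions, where the factor $\sqrt{\lambda}$ in $S$ is exactly what makes the $\lambda$'s cancel, iterated on $\delta_y$ using local finiteness;
\item the norm-convergent series $e^{-t\mathcal L}=e^{-t}\sum_k \frac{t^k}{k!}\mathcal M^k$.
\end{itemize}

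Expanding in powers of $\mathcal M$ rather than $\mathcal L$ is the right choice. For $t\ge 0$ all coefficients are nonnegative, so Tonelli justifies exchanging the $k$-sum with a possibly infinite fiber sum. It also shows a posteriori that the left side of \eqref{eq5} is finite. With powers of $\mathcal L$ the coefficients are signed, and you would need an extra absolute-convergence bound.

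Another common argument checks that $u\mapsto\sqrt{\lambda(u)}\sum_{x\in\pi^{-1}(u)}\tilde h_t(x,y)$ solves the heat equation on the base with initial value $\sqrt{\lambda(v)}\,\delta_v$, and then invokes uniqueness. Compared with that, your approach avoids having to specify a uniqueness class for the heat equation on an infinite graph.

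Two small remarks:
\begin{itemize}
\item The bound $\|\mathcal M\|\le 1$ is not needed, since the exponential series of any bounded operator converges in norm.
\item In both applications in this paper (Theorems~\ref{K1} and~\ref{K2}) the fibers are finite, so the interchange issue does not actually arise there.
\end{itemize}
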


\begin{remark}
 If the covering $\pi \colon \tilde{V} \to V$ is strong and regular with finite fibers, then~\eqref{new_hk} becomes 
\begin{equation}\label{eq6}
 \tilde{h}_t(x,y)=\frac{1}{\sqrt{\vert\pi_x^{-1}(\pi_x(y))\vert}}h_t(\pi_x(x),\pi_x(y)).
\end{equation}
\end{remark}

\section{Free products of finite groups of equal order}\label{family1}
\subsection{The Cayley graph and its projection}\label{projection}
Let 
$$
\mathcal A=\mathcal A_{\mN}^{\mng}:=G_1*G_2*\cdots *G_{\mng},\quad r\ge2,\quad |G_1|=\dots=|G_{\mng}|=\mN\ge2,
$$ 
and let $\mathcal X =\mathcal X_{\mN}^{\mng}= \mathrm{Cay}(\mathcal A, S)$ denote the Cayley graph with respect to the generating set
$$
S = \bigsqcup_{i=1}^\mng \bigl(G_i \setminus \{e\}\bigr).
$$
All weights in the graph $\mathcal X$ are 1.
We illustrate $\mathcal X$ in Figure~\ref{fig:cayley_graph} in the case $\mN=5$ and $\mng=3$, and we show below that the graph $\mathcal X$ has a recursive symmetry structure.

In what follows, we implicitly fix for each pair of groups $G_i$ and $G_j$, an identification of the corresponding pointed complete graphs. 

At the identity vertex $e$, there are $\mng$ branches in $\mathcal X$, which start by complete graphs on $\mN$ vertices 
(corresponding to the $\mng$ free factors). 
The symmetric group $S_{\mng}$ acts on $\mathcal X$ fixing $e$ and permuting these $\mng$ branches.
Quotienting by this action, we obtain the first projection $\pi_1\colon \mathcal{X} \to \mathcal{X}_1$, under which the $\mng$ branches are identified, and we are left with a single branch starting with a complete graph containing $v_0=\pi_1(e)$, see Fig.~\ref{fig:step1}. 

\begin{figure}[h]
\begin{tikzpicture}[scale=0.8]

\tikzset{
  treeedge/.style={line width=0.6pt},
  cliqueedge/.style={gray!90, line width=0.3pt}
}

\coordinate (e) at (0,0);

\node at (0,-2.7) {\small $\mathcal X$};

\coordinate (C1) at (90:1.8);

\coordinate (a0) at ($ (C1) + (270:1.3) $); 
\coordinate (a1) at ($ (C1) + (342:1.3) $); 
\coordinate (a2) at ($ (C1) + (54:1.3) $);
\coordinate (a3) at ($ (C1) + (126:1.3) $);
\coordinate (a4) at ($ (C1) + (198:1.3) $);

\coordinate (C2) at (210:1.8);

\coordinate (b0) at ($ (C2) + (30:1.3) $); 
\coordinate (b1) at ($ (C2) + (102:1.3) $); 
\coordinate (b2) at ($ (C2) + (174:1.3) $);
\coordinate (b3) at ($ (C2) + (246:1.3) $);
\coordinate (b4) at ($ (C2) + (318:1.3) $);

\coordinate (C3) at (330:1.8);

\coordinate (c0) at ($ (C3) + (150:1.3) $); 
\coordinate (c1) at ($ (C3) + (222:1.3) $); 
\coordinate (c2) at ($ (C3) + (294:1.3) $);
\coordinate (c3) at ($ (C3) + (6:1.3) $);
\coordinate (c4) at ($ (C3) + (78:1.3) $);

\draw[fill=black] (e) circle [radius=2.5pt];
\node at (0,-0.35) {\tiny$e$};

\draw[fill=black] (a1) circle [radius=2pt];
\draw[fill=black] (a2) circle [radius=2pt];
\draw[fill=black] (a3) circle [radius=2pt];
\draw[fill=black] (a4) circle [radius=2pt];

\draw(e)--(a1); \draw(e)--(a2); \draw(e)--(a3); \draw(e)--(a4);
\draw[cliqueedge](a1)--(a2); \draw[cliqueedge](a1)--(a3); \draw[cliqueedge](a1)--(a4); 
\draw[cliqueedge](a2)--(a3); \draw[cliqueedge](a2)--(a4); 
\draw[cliqueedge](a3)--(a4); 

\node at ($ (C1) + (342:1.6) $) {\tiny$...$};
\node at ($ (C1) + (126:1.55) $) {\tiny$...$};
\node at ($ (C1) + (198:1.6) $) {\tiny$...$};

\draw[fill=black] (b1) circle [radius=2pt];
\draw[fill=black] (b2) circle [radius=2pt];
\draw[fill=black] (b3) circle [radius=2pt];
\draw[fill=black] (b4) circle [radius=2pt];

\draw(e)--(b1); \draw(e)--(b2); \draw(e)--(b3); \draw(e)--(b4);
\draw[cliqueedge](b1)--(b2); \draw[cliqueedge](b1)--(b3); \draw[cliqueedge](b1)--(b4); 
\draw[cliqueedge](b2)--(b3); \draw[cliqueedge](b2)--(b4); 
\draw[cliqueedge](b3)--(b4); 

\node at ($ (C2) + (102:1.5) $) {\tiny$...$};
\node at ($ (C2) + (174:1.6) $) {\tiny$...$};
\node at ($ (C2) + (246:1.55) $) {\tiny$...$};
\node at ($ (C2) + (318:1.6) $) {\tiny$...$};

\draw[fill=black] (c1) circle [radius=2pt];
\draw[fill=black] (c2) circle [radius=2pt];
\draw[fill=black] (c3) circle [radius=2pt];
\draw[fill=black] (c4) circle [radius=2pt];

\draw(e)--(c1); \draw(e)--(c2); \draw(e)--(c3); \draw(e)--(c4);
\draw[cliqueedge](c1)--(c2); \draw[cliqueedge](c1)--(c3); \draw[cliqueedge](c1)--(c4); 
\draw[cliqueedge](c2)--(c3); \draw[cliqueedge](c2)--(c4); 
\draw[cliqueedge](c3)--(c4); 

\node at ($ (C3) + (222:1.6) $) {\tiny$...$};
\node at ($ (C3) + (294:1.55) $) {\tiny$...$};
\node at ($ (C3) + (6:1.6) $) {\tiny$...$};
\node at ($ (C3) + (78:1.55) $) {\tiny$...$};

\coordinate (Attach) at (a2);

\coordinate (D1) at ($ (Attach) + (5:1.5) $);
\coordinate (D2) at ($ (Attach) + (75:1.5) $);


\coordinate (d1_0) at ($ (D1) + (185:0.7) $);
\coordinate (d1_1) at ($ (D1) + (257:0.7) $);
\coordinate (d1_2) at ($ (D1) + (329:0.7) $);
\coordinate (d1_3) at ($ (D1) + (41:0.7) $);
\coordinate (d1_4) at ($ (D1) + (113:0.7) $);

\draw[fill=black] (d1_1) circle [radius=1.5pt];
\draw[fill=black] (d1_2) circle [radius=1.5pt];
\draw[fill=black] (d1_3) circle [radius=1.5pt];
\draw[fill=black] (d1_4) circle [radius=1.5pt];

\draw(Attach)--(d1_1); \draw(Attach)--(d1_2); \draw(Attach)--(d1_3); \draw(Attach)--(d1_4);
\draw[cliqueedge](d1_1)--(d1_2); \draw[cliqueedge](d1_1)--(d1_3); \draw[cliqueedge](d1_1)--(d1_4); 
\draw[cliqueedge](d1_2)--(d1_3); \draw[cliqueedge](d1_2)--(d1_4); 
\draw[cliqueedge](d1_3)--(d1_4); 

\node at ($ (D1) + (257:0.9) $) {\tiny$...$};
\node at ($ (D1) + (325:0.9) $) {\tiny$...$};
\node at ($ (D1) + (113:0.9) $) {\tiny$...$};

\coordinate (d2_0) at ($ (D2) + (255:0.7) $);
\coordinate (d2_1) at ($ (D2) + (327:0.7) $);
\coordinate (d2_2) at ($ (D2) + (39:0.7) $);
\coordinate (d2_3) at ($ (D2) + (111:0.7) $);
\coordinate (d2_4) at ($ (D2) + (183:0.7) $);

\draw(Attach)--(d2_1); \draw(Attach)--(d2_2); \draw(Attach)--(d2_3); \draw(Attach)--(d2_4);
\draw[cliqueedge](d2_1)--(d2_2); \draw[cliqueedge](d2_1)--(d2_3); \draw[cliqueedge](d2_1)--(d2_4); 
\draw[cliqueedge](d2_2)--(d2_3); \draw[cliqueedge](d2_2)--(d2_4); 
\draw[cliqueedge](d2_3)--(d2_4); 

\node at ($ (D2) + (39:0.9) $) {\tiny$...$};
\node at ($ (D2) + (111:0.9) $) {\tiny$...$};
\node at ($ (D2) + (183:0.95) $) {\tiny$...$};

\draw[fill=black] (d2_1) circle [radius=1.5pt];
\draw[fill=black] (d2_2) circle [radius=1.5pt];
\draw[fill=black] (d2_3) circle [radius=1.5pt];
\draw[fill=black] (d2_4) circle [radius=1.5pt];

\coordinate (Attach2) at (d1_3);

\coordinate (E1) at ($ (Attach2) + (5:1) $);
\coordinate (E2) at ($ (Attach2) + (75:1) $);


\coordinate (e1_0) at ($ (E1) + (185:0.5) $);
\coordinate (e1_1) at ($ (E1) + (257:0.5) $);
\coordinate (e1_2) at ($ (E1) + (329:0.5) $);
\coordinate (e1_3) at ($ (E1) + (41:0.5) $);
\coordinate (e1_4) at ($ (E1) + (113:0.5) $);

\draw[fill=black] (e1_1) circle [radius=1pt];
\draw[fill=black] (e1_2) circle [radius=1pt];
\draw[fill=black] (e1_3) circle [radius=1pt];
\draw[fill=black] (e1_4) circle [radius=1pt];

\draw(Attach2)--(e1_1); \draw(Attach2)--(e1_2); \draw(Attach2)--(e1_3); \draw(Attach2)--(e1_4);
\draw[cliqueedge](e1_1)--(e1_2); \draw[cliqueedge](e1_1)--(e1_3); \draw[cliqueedge](e1_1)--(e1_4); 
\draw[cliqueedge](e1_2)--(e1_3); \draw[cliqueedge](e1_2)--(e1_4); 
\draw[cliqueedge](e1_3)--(e1_4); 

\node at ($ (E1) + (257:0.65) $) {\tiny$...$};
\node at ($ (E1) + (326:0.7) $) {\tiny$...$};
\node at ($ (E1) + (38:0.7) $) {\tiny$...$};
\node at ($ (E1) + (102:0.6) $) {\tiny$...$};

\coordinate (e2_0) at ($ (E2) + (255:0.5) $);
\coordinate (e2_1) at ($ (E2) + (327:0.5) $);
\coordinate (e2_2) at ($ (E2) + (39:0.5) $);
\coordinate (e2_3) at ($ (E2) + (111:0.5) $);
\coordinate (e2_4) at ($ (E2) + (183:0.5) $);

\draw(Attach2)--(e2_1); \draw(Attach2)--(e2_2); \draw(Attach2)--(e2_3); \draw(Attach2)--(e2_4);
\draw[cliqueedge](e2_1)--(e2_2); \draw[cliqueedge](e2_1)--(e2_3); \draw[cliqueedge](e2_1)--(e2_4); 
\draw[cliqueedge](e2_2)--(e2_3); \draw[cliqueedge](e2_2)--(e2_4); 
\draw[cliqueedge](e2_3)--(e2_4); 

\draw[fill=black] (e2_1) circle [radius=1pt];
\draw[fill=black] (e2_2) circle [radius=1pt];
\draw[fill=black] (e2_3) circle [radius=1pt];
\draw[fill=black] (e2_4) circle [radius=1pt];

\node at ($ (E2) + (39:0.7) $) {\tiny$...$};
\node at ($ (E2) + (111:0.65) $) {\tiny$...$};
\node at ($ (E2) + (183:0.7) $) {\tiny$...$};

\draw[-{Latex}] (4,0) -- node[above]{$\pi_1$}(5.2,0);

\begin{scope}[shift={(6,-0.8)}]

\node at (3,-2) {\small$\mathcal{X}_1:=\mathcal{X}/S_{\mng}$};

\coordinate (e_q1) at (0,0);
\draw[fill=black] (e_q1) circle [radius=2pt];
\node at (-0.3,0) {\tiny$v_0$};

\coordinate (C_q1) at (0:1.3);

\coordinate (a0_q1) at ($ (C_q1) + (180:1.3) $); 
\coordinate (a1_q1) at ($ (C_q1) + (252:1.3) $); 
\coordinate (a2_q1) at ($ (C_q1) + (324:1.3) $);
\coordinate (a3_q1) at ($ (C_q1) + (36:1.3) $);
\coordinate (a4_q1) at ($ (C_q1) + (108:1.3) $);

\draw[fill=black] (a1_q1) circle [radius=2pt];
\draw[fill=black] (a2_q1) circle [radius=2pt];
\draw[fill=black] (a3_q1) circle [radius=2pt];
\draw[fill=black] (a4_q1) circle [radius=2pt];

\draw(e_q1)--(a1_q1); \draw(e_q1)--(a2_q1); \draw(e_q1)--(a3_q1); \draw(e_q1)--(a4_q1);
\draw[cliqueedge](a1_q1)--(a2_q1); \draw[cliqueedge](a1_q1)--(a3_q1); \draw[cliqueedge](a1_q1)--(a4_q1); 
\draw[cliqueedge](a2_q1)--(a3_q1); \draw[cliqueedge](a2_q1)--(a4_q1); 
\draw[cliqueedge](a3_q1)--(a4_q1); 

\node at ($ (C_q1) + (252:1.6) $) {\tiny$...$};
\node at ($ (C_q1) + (324:1.6) $) {\tiny$...$};
\node at ($ (C_q1) + (108:1.55) $) {\tiny$...$};

\coordinate (Attach_q1) at (a3_q1);

\coordinate (D1_q1) at ($ (Attach_q1) + (5:1.5) $);
\coordinate (D2_q1) at ($ (Attach_q1) + (75:1.5) $);


\coordinate (d1_q1_0) at ($ (D1_q1) + (185:0.7) $);
\coordinate (d1_q1_1) at ($ (D1_q1) + (257:0.7) $);
\coordinate (d1_q1_2) at ($ (D1_q1) + (329:0.7) $);
\coordinate (d1_q1_3) at ($ (D1_q1) + (41:0.7) $);
\coordinate (d1_q1_4) at ($ (D1_q1) + (113:0.7) $);

\draw[fill=black] (d1_q1_1) circle [radius=1.5pt];
\draw[fill=black] (d1_q1_2) circle [radius=1.5pt];
\draw[fill=black] (d1_q1_3) circle [radius=1.5pt];
\draw[fill=black] (d1_q1_4) circle [radius=1.5pt];

\draw(Attach_q1)--(d1_q1_1); \draw(Attach_q1)--(d1_q1_2); \draw(Attach_q1)--(d1_q1_3); \draw(Attach_q1)--(d1_q1_4);
\draw[cliqueedge](d1_q1_1)--(d1_q1_2); \draw[cliqueedge](d1_q1_1)--(d1_q1_3); \draw[cliqueedge](d1_q1_1)--(d1_q1_4); 
\draw[cliqueedge](d1_q1_2)--(d1_q1_3); \draw[cliqueedge](d1_q1_2)--(d1_q1_4); 
\draw[cliqueedge](d1_q1_3)--(d1_q1_4); 

\node at ($ (D1_q1) + (257:0.9) $) {\tiny$...$};
\node at ($ (D1_q1) + (329:0.9) $) {\tiny$...$};

\coordinate (d2_q1_0) at ($ (D2_q1) + (255:0.7) $);
\coordinate (d2_q1_1) at ($ (D2_q1) + (327:0.7) $);
\coordinate (d2_q1_2) at ($ (D2_q1) + (39:0.7) $);
\coordinate (d2_q1_3) at ($ (D2_q1) + (111:0.7) $);
\coordinate (d2_q1_4) at ($ (D2_q1) + (183:0.7) $);

\draw(Attach_q1)--(d2_q1_1); \draw(Attach_q1)--(d2_q1_2); \draw(Attach_q1)--(d2_q1_3); \draw(Attach_q1)--(d2_q1_4);
\draw[cliqueedge](d2_q1_1)--(d2_q1_2); \draw[cliqueedge](d2_q1_1)--(d2_q1_3); \draw[cliqueedge](d2_q1_1)--(d2_q1_4); 
\draw[cliqueedge](d2_q1_2)--(d2_q1_3); \draw[cliqueedge](d2_q1_2)--(d2_q1_4); 
\draw[cliqueedge](d2_q1_3)--(d2_q1_4); 

\draw[fill=black] (d2_q1_1) circle [radius=1.5pt];
\draw[fill=black] (d2_q1_2) circle [radius=1.5pt];
\draw[fill=black] (d2_q1_3) circle [radius=1.5pt];
\draw[fill=black] (d2_q1_4) circle [radius=1.5pt];

\node at ($ (D2_q1) + (327:0.9) $) {\tiny$...$};
\node at ($ (D2_q1) + (39:0.9) $) {\tiny$...$};
\node at ($ (D2_q1) + (111:0.9) $) {\tiny$...$};
\node at ($ (D2_q1) + (183:1) $) {\tiny$...$};

\coordinate (Attach2_q1) at (d1_q1_3);

\coordinate (E1_q1) at ($ (Attach2_q1) + (5:1) $);
\coordinate (E2_q1) at ($ (Attach2_q1) + (75:1) $);


\coordinate (e1_q1_0) at ($ (E1_q1) + (185:0.5) $);
\coordinate (e1_q1_1) at ($ (E1_q1) + (257:0.5) $);
\coordinate (e1_q1_2) at ($ (E1_q1) + (329:0.5) $);
\coordinate (e1_q1_3) at ($ (E1_q1) + (41:0.5) $);
\coordinate (e1_q1_4) at ($ (E1_q1) + (113:0.5) $);

\draw[fill=black] (e1_q1_1) circle [radius=1pt];
\draw[fill=black] (e1_q1_2) circle [radius=1pt];
\draw[fill=black] (e1_q1_3) circle [radius=1pt];
\draw[fill=black] (e1_q1_4) circle [radius=1pt];

\draw(Attach2_q1)--(e1_q1_1); \draw(Attach2_q1)--(e1_q1_2); \draw(Attach2_q1)--(e1_q1_3); \draw(Attach2_q1)--(e1_q1_4);
\draw[cliqueedge](e1_q1_1)--(e1_q1_2); \draw[cliqueedge](e1_q1_1)--(e1_q1_3); \draw[cliqueedge](e1_q1_1)--(e1_q1_4); 
\draw[cliqueedge](e1_q1_2)--(e1_q1_3); \draw[cliqueedge](e1_q1_2)--(e1_q1_4); 
\draw[cliqueedge](e1_q1_3)--(e1_q1_4); 

\node at ($ (E1_q1) + (257:0.65) $) {\tiny$...$};
\node at ($ (E1_q1) + (326:0.7) $) {\tiny$...$};
\node at ($ (E1_q1) + (38:0.7) $) {\tiny$...$};
\node at ($ (E1_q1) + (102:0.6) $) {\tiny$...$};

\coordinate (e2_q1_0) at ($ (E2_q1) + (255:0.5) $);
\coordinate (e2_q1_1) at ($ (E2_q1) + (327:0.5) $);
\coordinate (e2_q1_2) at ($ (E2_q1) + (39:0.5) $);
\coordinate (e2_q1_3) at ($ (E2_q1) + (111:0.5) $);
\coordinate (e2_q1_4) at ($ (E2_q1) + (183:0.5) $);

\draw(Attach2_q1)--(e2_q1_1); \draw(Attach2_q1)--(e2_q1_2); \draw(Attach2_q1)--(e2_q1_3); \draw(Attach2_q1)--(e2_q1_4);
\draw[cliqueedge](e2_q1_1)--(e2_q1_2); \draw[cliqueedge](e2_q1_1)--(e2_q1_3); \draw[cliqueedge](e2_q1_1)--(e2_q1_4); 
\draw[cliqueedge](e2_q1_2)--(e2_q1_3); \draw[cliqueedge](e2_q1_2)--(e2_q1_4); 
\draw[cliqueedge](e2_q1_3)--(e2_q1_4); 

\draw[fill=black] (e2_q1_1) circle [radius=1pt];
\draw[fill=black] (e2_q1_2) circle [radius=1pt];
\draw[fill=black] (e2_q1_3) circle [radius=1pt];
\draw[fill=black] (e2_q1_4) circle [radius=1pt];

\node at ($ (E2_q1) + (39:0.7) $) {\tiny$...$};
\node at ($ (E2_q1) + (111:0.65) $) {\tiny$...$};
\node at ($ (E2_q1) + (183:0.7) $) {\tiny$...$};

\end{scope}

\end{tikzpicture}
\caption{Covering $\pi_1\colon \mathcal{X} \to \mathcal{X}_1$, with $v_0=\pi_1(e)$. All edges in $\mathcal{X}$ (respectively $\mathcal{X}_1$) have weight $w_0=1$ (respectively $w_1=\mng$).} \label{fig:step1}
\end{figure}

Starting from $\mathcal{X}_1$, we construct recursively projections $\pi_{i}\colon \mathcal{X}_{i-1}\to \mathcal{X}_{i}$ for all $i\ge2$, where
$$
\mathcal{X}_i=
\begin{cases} 
   \mathcal{X}_{i-1}/S_{\mN-1} &\text{if $i$ is even} \\
   \mathcal{X}_{i-1}/S_{r-1} &\text{if $i$ is odd},
\end{cases} 
$$
and each $\mathcal{X}_i$ contains vertices $v_0,v_1,\dots, v_{\lfloor i/2 \rfloor }$ organised linearly, in the sense that the only neighbours of $v_j$ for $j<\lfloor i/2 \rfloor$ are $v_1$ if $j=0$, and $v_{j\pm1}$ if $j>0$. $\mathcal{X}_i$ also contains loops at $v_1,\dots, v_{\lfloor i/2 \rfloor }$.

For $i\ge1$, the symmetric group $S_{\mN-1}$ acts on $\mathcal{X}_{2i-1}$, fixing $v_0,\dots,v_{i-1}$ and permuting all $\mN-1$ neighbours of $v_{i-1}$, except $v_{i-2}$, together with the branches attached to them, see Fig.~\ref{fig:step_even}.  

For $i\ge1$, the symmetric group $S_{r-1}$ acts on $\mathcal{X}_{2i}$, fixing $v_0,\dots,v_{i}$ and permuting the $r-1$ branches
starting with complete graphs containing $v_i$, see Fig.~\ref{fig:step_odd}.

\begin{figure}[h]
\begin{tikzpicture}[scale=1]

\tikzset{
  treeedge/.style={line width=0.6pt},
  cliqueedge/.style={gray!90, line width=0.3pt}
}

\node at (-1.2,0) {\small$\mathcal{X}_{2i-1}$};

\coordinate (p0) at (0,0);
\coordinate (p1) at ($(p0) + (1.8,0) $);
\coordinate (pd1) at ($(p1) + (0.65,0) $); 
\coordinate (pd2) at ($(pd1) + (0.65,0) $); 
\coordinate (p_i-2) at ($(pd2) + (0.65,0) $);

\node at (0,-0.3) {\tiny$v_0$};
\node at (1.8,-0.3) {\tiny$v_1$};
\node at (2.8,0) {\tiny$\dots$};
\node at (3.75,-0.3) {\tiny$v_{i-1}$};

\draw[fill=black] (p0) circle [radius=2pt];
\draw[fill=black] (p1) circle [radius=2pt];
\draw[fill=black] (p_i-2) circle [radius=2pt];

\draw(p0)--node[above]{\tiny{\color{black} $w_2$}}(p1)--(pd1); \draw(pd2)--(p_i-2); 

\makeatletter
\tikzset{my loop/.style =  {to path={
  \pgfextra{\let\tikztotarget=\tikztostart}
  [looseness=6,min distance=10mm]
  \tikz@to@curve@path}
  }}  
\makeatletter 
\path (p1) edge[my loop] node[above] {\tiny{$(\mN-2)w_2$}} (p1);
\path (p_i-2) edge[my loop] node[above] {\tiny{$(\mN-2)w_{2i-2}$}} (p_i-2);

\coordinate (C_q1) at ($ (p_i-2) + (0:1.8) $);

\coordinate (a0_q1) at ($ (C_q1) + (180:0.8) $); 
\coordinate (a1_q1) at ($ (C_q1) + (252:0.8) $); 
\coordinate (a2_q1) at ($ (C_q1) + (324:0.8) $);
\coordinate (a3_q1) at ($ (C_q1) + (36:0.8) $);
\coordinate (a4_q1) at ($ (C_q1) + (108:0.8) $);

\draw[fill=black] (a1_q1) circle [radius=2pt];
\draw[fill=black] (a2_q1) circle [radius=2pt];
\draw[fill=black] (a3_q1) circle [radius=2pt];
\draw[fill=black] (a4_q1) circle [radius=2pt];

\draw(p_i-2)--(a1_q1); \draw(p_i-2)--(a2_q1); \draw(p_i-2)--(a3_q1); \draw(p_i-2)--(a4_q1);
\draw[cliqueedge](a1_q1)--(a2_q1); \draw[cliqueedge](a1_q1)--(a3_q1); \draw[cliqueedge](a1_q1)--(a4_q1); 
\draw[cliqueedge](a2_q1)--(a3_q1); \draw[cliqueedge](a2_q1)--(a4_q1); 
\draw[cliqueedge](a3_q1)--(a4_q1); 

\node at ($ (C_q1) + (252:1.1) $) {\tiny$...$};
\node at ($ (C_q1) + (324:1.1) $) {\tiny$...$};
\node at ($ (C_q1) + (108:1.05) $) {\tiny$...$};

\coordinate (Attach_q1) at (a3_q1);

\coordinate (D1_q1) at ($ (Attach_q1) + (5:1.5) $);
\coordinate (D2_q1) at ($ (Attach_q1) + (75:1.5) $);


\coordinate (d1_q1_0) at ($ (D1_q1) + (185:0.7) $);
\coordinate (d1_q1_1) at ($ (D1_q1) + (257:0.7) $);
\coordinate (d1_q1_2) at ($ (D1_q1) + (329:0.7) $);
\coordinate (d1_q1_3) at ($ (D1_q1) + (41:0.7) $);
\coordinate (d1_q1_4) at ($ (D1_q1) + (113:0.7) $);

\draw[fill=black] (d1_q1_1) circle [radius=1.5pt];
\draw[fill=black] (d1_q1_2) circle [radius=1.5pt];
\draw[fill=black] (d1_q1_3) circle [radius=1.5pt];
\draw[fill=black] (d1_q1_4) circle [radius=1.5pt];

\draw(Attach_q1)--(d1_q1_1); \draw(Attach_q1)--(d1_q1_2); \draw(Attach_q1)--(d1_q1_3); \draw(Attach_q1)--(d1_q1_4);
\draw[cliqueedge](d1_q1_1)--(d1_q1_2); \draw[cliqueedge](d1_q1_1)--(d1_q1_3); \draw[cliqueedge](d1_q1_1)--(d1_q1_4); 
\draw[cliqueedge](d1_q1_2)--(d1_q1_3); \draw[cliqueedge](d1_q1_2)--(d1_q1_4); 
\draw[cliqueedge](d1_q1_3)--(d1_q1_4); 

\node at ($ (D1_q1) + (257:0.9) $) {\tiny$...$};
\node at ($ (D1_q1) + (329:0.9) $) {\tiny$...$};

\coordinate (d2_q1_0) at ($ (D2_q1) + (255:0.7) $);
\coordinate (d2_q1_1) at ($ (D2_q1) + (327:0.7) $);
\coordinate (d2_q1_2) at ($ (D2_q1) + (39:0.7) $);
\coordinate (d2_q1_3) at ($ (D2_q1) + (111:0.7) $);
\coordinate (d2_q1_4) at ($ (D2_q1) + (183:0.7) $);

\draw(Attach_q1)--(d2_q1_1); \draw(Attach_q1)--(d2_q1_2); \draw(Attach_q1)--(d2_q1_3); \draw(Attach_q1)--(d2_q1_4);
\draw[cliqueedge](d2_q1_1)--(d2_q1_2); \draw[cliqueedge](d2_q1_1)--(d2_q1_3); \draw[cliqueedge](d2_q1_1)--(d2_q1_4); 
\draw[cliqueedge](d2_q1_2)--(d2_q1_3); \draw[cliqueedge](d2_q1_2)--(d2_q1_4); 
\draw[cliqueedge](d2_q1_3)--(d2_q1_4); 

\draw[fill=black] (d2_q1_1) circle [radius=1.5pt];
\draw[fill=black] (d2_q1_2) circle [radius=1.5pt];
\draw[fill=black] (d2_q1_3) circle [radius=1.5pt];
\draw[fill=black] (d2_q1_4) circle [radius=1.5pt];

\node at ($ (D2_q1) + (327:0.9) $) {\tiny$...$};
\node at ($ (D2_q1) + (39:0.9) $) {\tiny$...$};
\node at ($ (D2_q1) + (111:0.9) $) {\tiny$...$};
\node at ($ (D2_q1) + (183:1) $) {\tiny$...$};

\coordinate (Attach2_q1) at (d1_q1_3);

\coordinate (E1_q1) at ($ (Attach2_q1) + (5:1) $);
\coordinate (E2_q1) at ($ (Attach2_q1) + (75:1) $);


\coordinate (e1_q1_0) at ($ (E1_q1) + (185:0.5) $);
\coordinate (e1_q1_1) at ($ (E1_q1) + (257:0.5) $);
\coordinate (e1_q1_2) at ($ (E1_q1) + (329:0.5) $);
\coordinate (e1_q1_3) at ($ (E1_q1) + (41:0.5) $);
\coordinate (e1_q1_4) at ($ (E1_q1) + (113:0.5) $);

\draw[fill=black] (e1_q1_1) circle [radius=1pt];
\draw[fill=black] (e1_q1_2) circle [radius=1pt];
\draw[fill=black] (e1_q1_3) circle [radius=1pt];
\draw[fill=black] (e1_q1_4) circle [radius=1pt];

\draw(Attach2_q1)--(e1_q1_1); \draw(Attach2_q1)--(e1_q1_2); \draw(Attach2_q1)--(e1_q1_3); \draw(Attach2_q1)--(e1_q1_4);
\draw[cliqueedge](e1_q1_1)--(e1_q1_2); \draw[cliqueedge](e1_q1_1)--(e1_q1_3); \draw[cliqueedge](e1_q1_1)--(e1_q1_4); 
\draw[cliqueedge](e1_q1_2)--(e1_q1_3); \draw[cliqueedge](e1_q1_2)--(e1_q1_4); 
\draw[cliqueedge](e1_q1_3)--(e1_q1_4); 

\node at ($ (E1_q1) + (257:0.65) $) {\tiny$...$};
\node at ($ (E1_q1) + (326:0.7) $) {\tiny$...$};
\node at ($ (E1_q1) + (38:0.7) $) {\tiny$...$};
\node at ($ (E1_q1) + (102:0.6) $) {\tiny$...$};

\coordinate (e2_q1_0) at ($ (E2_q1) + (255:0.5) $);
\coordinate (e2_q1_1) at ($ (E2_q1) + (327:0.5) $);
\coordinate (e2_q1_2) at ($ (E2_q1) + (39:0.5) $);
\coordinate (e2_q1_3) at ($ (E2_q1) + (111:0.5) $);
\coordinate (e2_q1_4) at ($ (E2_q1) + (183:0.5) $);

\draw(Attach2_q1)--(e2_q1_1); \draw(Attach2_q1)--(e2_q1_2); \draw(Attach2_q1)--(e2_q1_3); \draw(Attach2_q1)--(e2_q1_4);
\draw[cliqueedge](e2_q1_1)--(e2_q1_2); \draw[cliqueedge](e2_q1_1)--(e2_q1_3); \draw[cliqueedge](e2_q1_1)--(e2_q1_4); 
\draw[cliqueedge](e2_q1_2)--(e2_q1_3); \draw[cliqueedge](e2_q1_2)--(e2_q1_4); 
\draw[cliqueedge](e2_q1_3)--(e2_q1_4); 

\draw[fill=black] (e2_q1_1) circle [radius=1pt];
\draw[fill=black] (e2_q1_2) circle [radius=1pt];
\draw[fill=black] (e2_q1_3) circle [radius=1pt];
\draw[fill=black] (e2_q1_4) circle [radius=1pt];

\node at ($ (E2_q1) + (39:0.7) $) {\tiny$...$};
\node at ($ (E2_q1) + (111:0.65) $) {\tiny$...$};
\node at ($ (E2_q1) + (183:0.7) $) {\tiny$...$};

\draw[-{Latex}] (0,-1.3) --(0,-2.5);
\node at (-0.4,-1.9) {$\pi_{2i}$};

\begin{scope}[shift={(0,-3.5)}]

\node at (-2,0) {\small$\mathcal{X}_{2i}=\mathcal{X}_{2i-1}/S_{\mN-1}$};

\coordinate (p0) at (0,0);
\coordinate (p1) at ($(p0) + (1.8,0) $);
\coordinate (pd1) at ($(p1) + (0.65,0) $); 
\coordinate (pd2) at ($(pd1) + (0.65,0) $); 
\coordinate (p_i-2) at ($(pd2) + (0.65,0) $);
\coordinate (p_i-1) at ($(p_i-2) + (1.8,0) $); 

\node at (0,-0.3) {\tiny$v_0$};
\node at (1.8,-0.3) {\tiny$v_1$};
\node at (2.8,0) {\tiny$\dots$};
\node at (3.75,-0.3) {\tiny$v_{i-1}$};
\node at (5.3,-0.3) {\tiny$v_{i}$};

\draw[fill=black] (p0) circle [radius=2pt];
\draw[fill=black] (p1) circle [radius=2pt];
\draw[fill=black] (p_i-2) circle [radius=2pt];
\draw[fill=black] (p_i-1) circle [radius=2pt];

\draw(p0)--node[above]{\tiny{\color{black} $w_2$}}(p1)--(pd1); \draw(pd2)--(p_i-2)--node[above]{\tiny{\color{black} $w_{2i}$}}(p_i-1); 

\makeatletter
\tikzset{my loop/.style =  {to path={
  \pgfextra{\let\tikztotarget=\tikztostart}
  [looseness=6,min distance=10mm]
  \tikz@to@curve@path}
  }}  
\makeatletter 
\path (p1) edge[my loop] node[above] {\tiny{$(\mN-2)w_2$}} (p1);
\path (p_i-2) edge[my loop] node[above] {\tiny{$(\mN-2)w_{2i-2}$}} (p_i-2);
\path (p_i-1) edge[my loop] node[above] {\tiny{$(\mN-2)w_{2i}$}} (p_i-1);

\coordinate (Attach_q2) at (p_i-1);

\coordinate (D1_q2) at ($ (Attach_q2) + (0:1.5) $);
\coordinate (D2_q2) at ($ (Attach_q2) + (-75:1.5) $);


\coordinate (d1_q2_0) at ($ (D1_q2) + (180:0.7) $);
\coordinate (d1_q2_1) at ($ (D1_q2) + (252:0.7) $);
\coordinate (d1_q2_2) at ($ (D1_q2) + (324:0.7) $);
\coordinate (d1_q2_3) at ($ (D1_q2) + (36:0.7) $);
\coordinate (d1_q2_4) at ($ (D1_q2) + (108:0.7) $);

\draw[fill=black] (d1_q2_1) circle [radius=1.5pt];
\draw[fill=black] (d1_q2_2) circle [radius=1.5pt];
\draw[fill=black] (d1_q2_3) circle [radius=1.5pt];
\draw[fill=black] (d1_q2_4) circle [radius=1.5pt];

\draw(Attach_q2)--(d1_q2_1); \draw(Attach_q2)--(d1_q2_2); \draw(Attach_q2)--(d1_q2_3); \draw(Attach_q2)--(d1_q2_4);
\draw[cliqueedge](d1_q2_1)--(d1_q2_2); \draw[cliqueedge](d1_q2_1)--(d1_q2_3); \draw[cliqueedge](d1_q2_1)--(d1_q2_4); 
\draw[cliqueedge](d1_q2_2)--(d1_q2_3); \draw[cliqueedge](d1_q2_2)--(d1_q2_4); 
\draw[cliqueedge](d1_q2_3)--(d1_q2_4); 

\node at ($ (D1_q2) + (252:0.9) $) {\tiny$...$};
\node at ($ (D1_q2) + (324:0.9) $) {\tiny$...$};

\coordinate (d2_q2_0) at ($ (D2_q2) + (-255:0.7) $);
\coordinate (d2_q2_1) at ($ (D2_q2) + (-327:0.7) $);
\coordinate (d2_q2_2) at ($ (D2_q2) + (-39:0.7) $);
\coordinate (d2_q2_3) at ($ (D2_q2) + (-111:0.7) $);
\coordinate (d2_q2_4) at ($ (D2_q2) + (-183:0.7) $);

\draw(Attach_q2)--(d2_q2_1); \draw(Attach_q2)--(d2_q2_2); \draw(Attach_q2)--(d2_q2_3); \draw(Attach_q2)--(d2_q2_4);
\draw[cliqueedge](d2_q2_1)--(d2_q2_2); \draw[cliqueedge](d2_q2_1)--(d2_q2_3); \draw[cliqueedge](d2_q2_1)--(d2_q2_4); 
\draw[cliqueedge](d2_q2_2)--(d2_q2_3); \draw[cliqueedge](d2_q2_2)--(d2_q2_4); 
\draw[cliqueedge](d2_q2_3)--(d2_q2_4); 

\draw[fill=black] (d2_q2_1) circle [radius=1.5pt];
\draw[fill=black] (d2_q2_2) circle [radius=1.5pt];
\draw[fill=black] (d2_q2_3) circle [radius=1.5pt];
\draw[fill=black] (d2_q2_4) circle [radius=1.5pt];

\node at ($ (D2_q2) + (-39:0.9) $) {\tiny$...$};
\node at ($ (D2_q2) + (-111:0.9) $) {\tiny$...$};
\node at ($ (D2_q2) + (-183:1) $) {\tiny$...$};

\coordinate (Attach2_q2) at (d1_q2_3);

\coordinate (E1_q2) at ($ (Attach2_q2) + (5:1) $);
\coordinate (E2_q2) at ($ (Attach2_q2) + (75:1) $);


\coordinate (e1_q2_0) at ($ (E1_q2) + (185:0.5) $);
\coordinate (e1_q2_1) at ($ (E1_q2) + (257:0.5) $);
\coordinate (e1_q2_2) at ($ (E1_q2) + (329:0.5) $);
\coordinate (e1_q2_3) at ($ (E1_q2) + (41:0.5) $);
\coordinate (e1_q2_4) at ($ (E1_q2) + (113:0.5) $);

\draw[fill=black] (e1_q2_1) circle [radius=1pt];
\draw[fill=black] (e1_q2_2) circle [radius=1pt];
\draw[fill=black] (e1_q2_3) circle [radius=1pt];
\draw[fill=black] (e1_q2_4) circle [radius=1pt];

\draw(Attach2_q2)--(e1_q2_1); \draw(Attach2_q2)--(e1_q2_2); \draw(Attach2_q2)--(e1_q2_3); \draw(Attach2_q2)--(e1_q2_4);
\draw[cliqueedge](e1_q2_1)--(e1_q2_2); \draw[cliqueedge](e1_q2_1)--(e1_q2_3); \draw[cliqueedge](e1_q2_1)--(e1_q2_4); 
\draw[cliqueedge](e1_q2_2)--(e1_q2_3); \draw[cliqueedge](e1_q2_2)--(e1_q2_4); 
\draw[cliqueedge](e1_q2_3)--(e1_q2_4); 

\node at ($ (E1_q2) + (257:0.65) $) {\tiny$...$};
\node at ($ (E1_q2) + (326:0.7) $) {\tiny$...$};
\node at ($ (E1_q2) + (38:0.7) $) {\tiny$...$};
\node at ($ (E1_q2) + (102:0.6) $) {\tiny$...$};

\coordinate (e2_q2_0) at ($ (E2_q2) + (255:0.5) $);
\coordinate (e2_q2_1) at ($ (E2_q2) + (327:0.5) $);
\coordinate (e2_q2_2) at ($ (E2_q2) + (39:0.5) $);
\coordinate (e2_q2_3) at ($ (E2_q2) + (111:0.5) $);
\coordinate (e2_q2_4) at ($ (E2_q2) + (183:0.5) $);

\draw(Attach2_q2)--(e2_q2_1); \draw(Attach2_q2)--(e2_q2_2); \draw(Attach2_q2)--(e2_q2_3); \draw(Attach2_q2)--(e2_q2_4);
\draw[cliqueedge](e2_q2_1)--(e2_q2_2); \draw[cliqueedge](e2_q2_1)--(e2_q2_3); \draw[cliqueedge](e2_q2_1)--(e2_q2_4); 
\draw[cliqueedge](e2_q2_2)--(e2_q2_3); \draw[cliqueedge](e2_q2_2)--(e2_q2_4); 
\draw[cliqueedge](e2_q2_3)--(e2_q2_4); 

\draw[fill=black] (e2_q2_1) circle [radius=1pt];
\draw[fill=black] (e2_q2_2) circle [radius=1pt];
\draw[fill=black] (e2_q2_3) circle [radius=1pt];
\draw[fill=black] (e2_q2_4) circle [radius=1pt];

\node at ($ (E2_q2) + (39:0.7) $) {\tiny$...$};
\node at ($ (E2_q2) + (111:0.65) $) {\tiny$...$};
\node at ($ (E2_q2) + (183:0.7) $) {\tiny$...$};

\end{scope}
\end{tikzpicture}

\caption{Covering $\pi_{2i}\colon \mathcal{X}_{2i-1}\to\mathcal{X}_{2i}$, where $v_j=\pi_i(v_j)$. All unlabelled edges in $\mathcal{X}_{2i}$ have weight $w_{2i}=(\mN-1)w_{2i-1}$.
} \label{fig:step_even}
\end{figure}

\begin{figure}[h]
\begin{tikzpicture}[scale=1]

\tikzset{
  treeedge/.style={line width=0.6pt},
  cliqueedge/.style={gray!90, line width=0.3pt}
}

\node at (-1.2,0) {\small$\mathcal{X}_{2i}$};

\coordinate (p0) at (0,0);
\coordinate (p1) at ($(p0) + (1.8,0) $);
\coordinate (pd1) at ($(p1) + (0.65,0) $); 
\coordinate (pd2) at ($(pd1) + (0.65,0) $); 
\coordinate (p_i-2) at ($(pd2) + (0.65,0) $);
\coordinate (p_i-1) at ($(p_i-2) + (1.8,0) $); 

\node at (0,-0.3) {\tiny$v_0$};
\node at (1.8,-0.3) {\tiny$v_1$};
\node at (2.8,0) {\tiny$\dots$};
\node at (3.75,-0.3) {\tiny$v_{i-1}$};
\node at (5.3,-0.3) {\tiny$v_{i}$};

\draw[fill=black] (p0) circle [radius=2pt];
\draw[fill=black] (p1) circle [radius=2pt];
\draw[fill=black] (p_i-2) circle [radius=2pt];
\draw[fill=black] (p_i-1) circle [radius=2pt];

\draw(p0)--node[above]{\tiny{\color{black} $w_2$}}(p1)--(pd1); \draw(pd2)--(p_i-2)--node[above]{\tiny{\color{black} $w_{2i}$}}(p_i-1); 

\makeatletter
\tikzset{my loop/.style =  {to path={
  \pgfextra{\let\tikztotarget=\tikztostart}
  [looseness=6,min distance=10mm]
  \tikz@to@curve@path}
  }}  
\makeatletter 
\path (p1) edge[my loop] node[above] {\tiny{$(\mN-2)w_2$}} (p1);
\path (p_i-2) edge[my loop] node[above] {\tiny{$(\mN-2)w_{2i-2}$}} (p_i-2);
\path (p_i-1) edge[my loop] node[above] {\tiny{$(\mN-2)w_{2i}$}} (p_i-1);

\coordinate (Attach_q2) at (p_i-1);

\coordinate (D1_q2) at ($ (Attach_q2) + (0:1.5) $);
\coordinate (D2_q2) at ($ (Attach_q2) + (-75:1.5) $);


\coordinate (d1_q2_0) at ($ (D1_q2) + (180:0.7) $);
\coordinate (d1_q2_1) at ($ (D1_q2) + (252:0.7) $);
\coordinate (d1_q2_2) at ($ (D1_q2) + (324:0.7) $);
\coordinate (d1_q2_3) at ($ (D1_q2) + (36:0.7) $);
\coordinate (d1_q2_4) at ($ (D1_q2) + (108:0.7) $);

\draw[fill=black] (d1_q2_1) circle [radius=1.5pt];
\draw[fill=black] (d1_q2_2) circle [radius=1.5pt];
\draw[fill=black] (d1_q2_3) circle [radius=1.5pt];
\draw[fill=black] (d1_q2_4) circle [radius=1.5pt];

\draw(Attach_q2)--(d1_q2_1); \draw(Attach_q2)--(d1_q2_2); \draw(Attach_q2)--(d1_q2_3); \draw(Attach_q2)--(d1_q2_4);
\draw[cliqueedge](d1_q2_1)--(d1_q2_2); \draw[cliqueedge](d1_q2_1)--(d1_q2_3); \draw[cliqueedge](d1_q2_1)--(d1_q2_4); 
\draw[cliqueedge](d1_q2_2)--(d1_q2_3); \draw[cliqueedge](d1_q2_2)--(d1_q2_4); 
\draw[cliqueedge](d1_q2_3)--(d1_q2_4); 

\node at ($ (D1_q2) + (252:0.9) $) {\tiny$...$};
\node at ($ (D1_q2) + (324:0.9) $) {\tiny$...$};

\coordinate (d2_q2_0) at ($ (D2_q2) + (-255:0.7) $);
\coordinate (d2_q2_1) at ($ (D2_q2) + (-327:0.7) $);
\coordinate (d2_q2_2) at ($ (D2_q2) + (-39:0.7) $);
\coordinate (d2_q2_3) at ($ (D2_q2) + (-111:0.7) $);
\coordinate (d2_q2_4) at ($ (D2_q2) + (-183:0.7) $);

\draw(Attach_q2)--(d2_q2_1); \draw(Attach_q2)--(d2_q2_2); \draw(Attach_q2)--(d2_q2_3); \draw(Attach_q2)--(d2_q2_4);
\draw[cliqueedge](d2_q2_1)--(d2_q2_2); \draw[cliqueedge](d2_q2_1)--(d2_q2_3); \draw[cliqueedge](d2_q2_1)--(d2_q2_4); 
\draw[cliqueedge](d2_q2_2)--(d2_q2_3); \draw[cliqueedge](d2_q2_2)--(d2_q2_4); 
\draw[cliqueedge](d2_q2_3)--(d2_q2_4); 

\draw[fill=black] (d2_q2_1) circle [radius=1.5pt];
\draw[fill=black] (d2_q2_2) circle [radius=1.5pt];
\draw[fill=black] (d2_q2_3) circle [radius=1.5pt];
\draw[fill=black] (d2_q2_4) circle [radius=1.5pt];

\node at ($ (D2_q2) + (-39:0.9) $) {\tiny$...$};
\node at ($ (D2_q2) + (-111:0.9) $) {\tiny$...$};
\node at ($ (D2_q2) + (-183:1) $) {\tiny$...$};

\coordinate (Attach2_q2) at (d1_q2_3);

\coordinate (E1_q2) at ($ (Attach2_q2) + (5:1) $);
\coordinate (E2_q2) at ($ (Attach2_q2) + (75:1) $);


\coordinate (e1_q2_0) at ($ (E1_q2) + (185:0.5) $);
\coordinate (e1_q2_1) at ($ (E1_q2) + (257:0.5) $);
\coordinate (e1_q2_2) at ($ (E1_q2) + (329:0.5) $);
\coordinate (e1_q2_3) at ($ (E1_q2) + (41:0.5) $);
\coordinate (e1_q2_4) at ($ (E1_q2) + (113:0.5) $);

\draw[fill=black] (e1_q2_1) circle [radius=1pt];
\draw[fill=black] (e1_q2_2) circle [radius=1pt];
\draw[fill=black] (e1_q2_3) circle [radius=1pt];
\draw[fill=black] (e1_q2_4) circle [radius=1pt];

\draw(Attach2_q2)--(e1_q2_1); \draw(Attach2_q2)--(e1_q2_2); \draw(Attach2_q2)--(e1_q2_3); \draw(Attach2_q2)--(e1_q2_4);
\draw[cliqueedge](e1_q2_1)--(e1_q2_2); \draw[cliqueedge](e1_q2_1)--(e1_q2_3); \draw[cliqueedge](e1_q2_1)--(e1_q2_4); 
\draw[cliqueedge](e1_q2_2)--(e1_q2_3); \draw[cliqueedge](e1_q2_2)--(e1_q2_4); 
\draw[cliqueedge](e1_q2_3)--(e1_q2_4); 

\node at ($ (E1_q2) + (257:0.65) $) {\tiny$...$};
\node at ($ (E1_q2) + (326:0.7) $) {\tiny$...$};
\node at ($ (E1_q2) + (38:0.7) $) {\tiny$...$};
\node at ($ (E1_q2) + (102:0.6) $) {\tiny$...$};

\coordinate (e2_q2_0) at ($ (E2_q2) + (255:0.5) $);
\coordinate (e2_q2_1) at ($ (E2_q2) + (327:0.5) $);
\coordinate (e2_q2_2) at ($ (E2_q2) + (39:0.5) $);
\coordinate (e2_q2_3) at ($ (E2_q2) + (111:0.5) $);
\coordinate (e2_q2_4) at ($ (E2_q2) + (183:0.5) $);

\draw(Attach2_q2)--(e2_q2_1); \draw(Attach2_q2)--(e2_q2_2); \draw(Attach2_q2)--(e2_q2_3); \draw(Attach2_q2)--(e2_q2_4);
\draw[cliqueedge](e2_q2_1)--(e2_q2_2); \draw[cliqueedge](e2_q2_1)--(e2_q2_3); \draw[cliqueedge](e2_q2_1)--(e2_q2_4); 
\draw[cliqueedge](e2_q2_2)--(e2_q2_3); \draw[cliqueedge](e2_q2_2)--(e2_q2_4); 
\draw[cliqueedge](e2_q2_3)--(e2_q2_4); 

\draw[fill=black] (e2_q2_1) circle [radius=1pt];
\draw[fill=black] (e2_q2_2) circle [radius=1pt];
\draw[fill=black] (e2_q2_3) circle [radius=1pt];
\draw[fill=black] (e2_q2_4) circle [radius=1pt];

\node at ($ (E2_q2) + (39:0.7) $) {\tiny$...$};
\node at ($ (E2_q2) + (111:0.65) $) {\tiny$...$};
\node at ($ (E2_q2) + (183:0.7) $) {\tiny$...$};

\draw[-{Latex}] (0,-1.8) --(0,-3);
\node at (-0.6,-2.4) {$\pi_{2i+1}$};

\begin{scope}[shift={(0,-4.5)}]
\node at (-2,0) {\small$\mathcal{X}_{2i+1}=\mathcal{X}_{2i}/S_{r-1}$};

\coordinate (p0) at (0,0);
\coordinate (p1) at ($(p0) + (1.8,0) $);
\coordinate (pd1) at ($(p1) + (0.65,0) $); 
\coordinate (pd2) at ($(pd1) + (0.65,0) $); 
\coordinate (p_i-2) at ($(pd2) + (0.65,0) $);
\coordinate (p_i-1) at ($(p_i-2) + (1.8,0) $); 

\node at (0,-0.3) {\tiny$v_0$};
\node at (1.8,-0.3) {\tiny$v_1$};
\node at (2.8,0) {\tiny$\dots$};
\node at (3.75,-0.3) {\tiny$v_{i-1}$};
\node at (5.55,-0.3) {\tiny$v_{i}$};

\draw[fill=black] (p0) circle [radius=2pt];
\draw[fill=black] (p1) circle [radius=2pt];
\draw[fill=black] (p_i-2) circle [radius=2pt];
\draw[fill=black] (p_i-1) circle [radius=2pt];

\draw(p0)--node[above]{\tiny{\color{black} $w_2$}}(p1)--(pd1); \draw(pd2)--(p_i-2)--node[above]{\tiny{\color{black} $w_{2i}$}}(p_i-1); 

\makeatletter
\tikzset{my loop/.style =  {to path={
  \pgfextra{\let\tikztotarget=\tikztostart}
  [looseness=6,min distance=10mm]
  \tikz@to@curve@path}
  }}  
\makeatletter 
\path (p1) edge[my loop] node[above] {\tiny{$(\mN-2)w_2$}} (p1);
\path (p_i-2) edge[my loop] node[above] {\tiny{$(\mN-2)w_{2i-2}$}} (p_i-2);
\path (p_i-1) edge[my loop] node[above] {\tiny{$(\mN-2)w_{2i}$}} (p_i-1);

\coordinate (Attach_q2) at (p_i-1);

\coordinate (D1_q2) at ($ (Attach_q2) + (0:1.5) $);
\coordinate (D2_q2) at ($ (Attach_q2) + (-75:1.5) $);


\coordinate (d1_q2_0) at ($ (D1_q2) + (180:0.7) $);
\coordinate (d1_q2_1) at ($ (D1_q2) + (252:0.7) $);
\coordinate (d1_q2_2) at ($ (D1_q2) + (324:0.7) $);
\coordinate (d1_q2_3) at ($ (D1_q2) + (36:0.7) $);
\coordinate (d1_q2_4) at ($ (D1_q2) + (108:0.7) $);

\draw[fill=black] (d1_q2_1) circle [radius=1.5pt];
\draw[fill=black] (d1_q2_2) circle [radius=1.5pt];
\draw[fill=black] (d1_q2_3) circle [radius=1.5pt];
\draw[fill=black] (d1_q2_4) circle [radius=1.5pt];

\draw(Attach_q2)--(d1_q2_1); \draw(Attach_q2)--(d1_q2_2); \draw(Attach_q2)--(d1_q2_3); \draw(Attach_q2)--(d1_q2_4);
\draw[cliqueedge](d1_q2_1)--(d1_q2_2); \draw[cliqueedge](d1_q2_1)--(d1_q2_3); \draw[cliqueedge](d1_q2_1)--(d1_q2_4); 
\draw[cliqueedge](d1_q2_2)--(d1_q2_3); \draw[cliqueedge](d1_q2_2)--(d1_q2_4); 
\draw[cliqueedge](d1_q2_3)--(d1_q2_4); 

\node at ($ (D1_q2) + (252:0.9) $) {\tiny$...$};
\node at ($ (D1_q2) + (324:0.9) $) {\tiny$...$};

\coordinate (Attach2_q2) at (d1_q2_3);

\coordinate (E1_q2) at ($ (Attach2_q2) + (5:1) $);
\coordinate (E2_q2) at ($ (Attach2_q2) + (75:1) $);


\coordinate (e1_q2_0) at ($ (E1_q2) + (185:0.5) $);
\coordinate (e1_q2_1) at ($ (E1_q2) + (257:0.5) $);
\coordinate (e1_q2_2) at ($ (E1_q2) + (329:0.5) $);
\coordinate (e1_q2_3) at ($ (E1_q2) + (41:0.5) $);
\coordinate (e1_q2_4) at ($ (E1_q2) + (113:0.5) $);

\draw[fill=black] (e1_q2_1) circle [radius=1pt];
\draw[fill=black] (e1_q2_2) circle [radius=1pt];
\draw[fill=black] (e1_q2_3) circle [radius=1pt];
\draw[fill=black] (e1_q2_4) circle [radius=1pt];

\draw(Attach2_q2)--(e1_q2_1); \draw(Attach2_q2)--(e1_q2_2); \draw(Attach2_q2)--(e1_q2_3); \draw(Attach2_q2)--(e1_q2_4);
\draw[cliqueedge](e1_q2_1)--(e1_q2_2); \draw[cliqueedge](e1_q2_1)--(e1_q2_3); \draw[cliqueedge](e1_q2_1)--(e1_q2_4); 
\draw[cliqueedge](e1_q2_2)--(e1_q2_3); \draw[cliqueedge](e1_q2_2)--(e1_q2_4); 
\draw[cliqueedge](e1_q2_3)--(e1_q2_4); 

\node at ($ (E1_q2) + (257:0.65) $) {\tiny$...$};
\node at ($ (E1_q2) + (326:0.7) $) {\tiny$...$};
\node at ($ (E1_q2) + (38:0.7) $) {\tiny$...$};
\node at ($ (E1_q2) + (102:0.6) $) {\tiny$...$};

\coordinate (e2_q2_0) at ($ (E2_q2) + (255:0.5) $);
\coordinate (e2_q2_1) at ($ (E2_q2) + (327:0.5) $);
\coordinate (e2_q2_2) at ($ (E2_q2) + (39:0.5) $);
\coordinate (e2_q2_3) at ($ (E2_q2) + (111:0.5) $);
\coordinate (e2_q2_4) at ($ (E2_q2) + (183:0.5) $);

\draw(Attach2_q2)--(e2_q2_1); \draw(Attach2_q2)--(e2_q2_2); \draw(Attach2_q2)--(e2_q2_3); \draw(Attach2_q2)--(e2_q2_4);
\draw[cliqueedge](e2_q2_1)--(e2_q2_2); \draw[cliqueedge](e2_q2_1)--(e2_q2_3); \draw[cliqueedge](e2_q2_1)--(e2_q2_4); 
\draw[cliqueedge](e2_q2_2)--(e2_q2_3); \draw[cliqueedge](e2_q2_2)--(e2_q2_4); 
\draw[cliqueedge](e2_q2_3)--(e2_q2_4); 

\draw[fill=black] (e2_q2_1) circle [radius=1pt];
\draw[fill=black] (e2_q2_2) circle [radius=1pt];
\draw[fill=black] (e2_q2_3) circle [radius=1pt];
\draw[fill=black] (e2_q2_4) circle [radius=1pt];

\node at ($ (E2_q2) + (39:0.7) $) {\tiny$...$};
\node at ($ (E2_q2) + (111:0.65) $) {\tiny$...$};
\node at ($ (E2_q2) + (183:0.7) $) {\tiny$...$};

\end{scope}
\end{tikzpicture}

\caption{
Covering $\pi_{2i+1}\colon \mathcal{X}_{2i} \to \mathcal{X}_{2i+1}$, where $v_j=\pi_{2i+1}(v_j)$. All unlabelled edges in $\mathcal{X}_{2i+1}$ have weight $w_{2i+1}=(r-1)w_{2i}$.} \label{fig:step_odd}
\end{figure}

\begin{figure}[h]
\begin{tikzpicture}[scale=1]

\tikzset{
  treeedge/.style={line width=0.6pt},
  cliqueedge/.style={gray!90, line width=0.3pt}
}

\coordinate (e) at (0,0);
\node at (2.7,-1.5) {\small $\mathcal{X}$};

\begin{scope}[scale=0.8, transform shape]

\coordinate (C1) at (90:1.8);

\coordinate (a0) at ($ (C1) + (270:1.3) $); 
\coordinate (a1) at ($ (C1) + (342:1.3) $); 
\coordinate (a2) at ($ (C1) + (54:1.3) $);
\coordinate (a3) at ($ (C1) + (126:1.3) $);
\coordinate (a4) at ($ (C1) + (198:1.3) $);

\coordinate (C2) at (210:1.8);

\coordinate (b0) at ($ (C2) + (30:1.3) $); 
\coordinate (b1) at ($ (C2) + (102:1.3) $); 
\coordinate (b2) at ($ (C2) + (174:1.3) $);
\coordinate (b3) at ($ (C2) + (246:1.3) $);
\coordinate (b4) at ($ (C2) + (318:1.3) $);

\coordinate (C3) at (330:1.8);

\coordinate (c0) at ($ (C3) + (150:1.3) $); 
\coordinate (c1) at ($ (C3) + (222:1.3) $); 
\coordinate (c2) at ($ (C3) + (294:1.3) $);
\coordinate (c3) at ($ (C3) + (6:1.3) $);
\coordinate (c4) at ($ (C3) + (78:1.3) $);

\draw[fill=black] (e) circle [radius=2.5pt];
\node at (0,-0.35) {\tiny$e$};

\draw[fill=black] (a1) circle [radius=2pt];
\draw[fill=black] (a2) circle [radius=2pt];
\draw[fill=black] (a3) circle [radius=2pt];
\draw[fill=black] (a4) circle [radius=2pt];

\draw(e)--(a1); \draw(e)--(a2); \draw(e)--(a3); \draw(e)--(a4);
\draw[cliqueedge](a1)--(a2); \draw[cliqueedge](a1)--(a3); \draw[cliqueedge](a1)--(a4); 
\draw[cliqueedge](a2)--(a3); \draw[cliqueedge](a2)--(a4); 
\draw[cliqueedge](a3)--(a4); 

\node at ($ (C1) + (342:1.6) $) {\tiny$...$};
\node at ($ (C1) + (126:1.55) $) {\tiny$...$};
\node at ($ (C1) + (198:1.6) $) {\tiny$...$};

\draw[fill=black] (b1) circle [radius=2pt];
\draw[fill=black] (b2) circle [radius=2pt];
\draw[fill=black] (b3) circle [radius=2pt];
\draw[fill=black] (b4) circle [radius=2pt];

\draw(e)--(b1); \draw(e)--(b2); \draw(e)--(b3); \draw(e)--(b4);
\draw[cliqueedge](b1)--(b2); \draw[cliqueedge](b1)--(b3); \draw[cliqueedge](b1)--(b4); 
\draw[cliqueedge](b2)--(b3); \draw[cliqueedge](b2)--(b4); 
\draw[cliqueedge](b3)--(b4); 

\node at ($ (C2) + (102:1.5) $) {\tiny$...$};
\node at ($ (C2) + (174:1.6) $) {\tiny$...$};
\node at ($ (C2) + (246:1.55) $) {\tiny$...$};
\node at ($ (C2) + (318:1.6) $) {\tiny$...$};

\draw[fill=black] (c1) circle [radius=2pt];
\draw[fill=black] (c2) circle [radius=2pt];
\draw[fill=black] (c3) circle [radius=2pt];
\draw[fill=black] (c4) circle [radius=2pt];

\draw(e)--(c1); \draw(e)--(c2); \draw(e)--(c3); \draw(e)--(c4);
\draw[cliqueedge](c1)--(c2); \draw[cliqueedge](c1)--(c3); \draw[cliqueedge](c1)--(c4); 
\draw[cliqueedge](c2)--(c3); \draw[cliqueedge](c2)--(c4); 
\draw[cliqueedge](c3)--(c4); 

\node at ($ (C3) + (222:1.6) $) {\tiny$...$};
\node at ($ (C3) + (294:1.55) $) {\tiny$...$};
\node at ($ (C3) + (6:1.6) $) {\tiny$...$};
\node at ($ (C3) + (78:1.55) $) {\tiny$...$};


\coordinate (Attach) at (a2);

\coordinate (D1) at ($ (Attach) + (5:1.5) $);
\coordinate (D2) at ($ (Attach) + (75:1.5) $);


\coordinate (d1_0) at ($ (D1) + (185:0.7) $);
\coordinate (d1_1) at ($ (D1) + (257:0.7) $);
\coordinate (d1_2) at ($ (D1) + (329:0.7) $);
\coordinate (d1_3) at ($ (D1) + (41:0.7) $);
\coordinate (d1_4) at ($ (D1) + (113:0.7) $);

\draw[fill=black] (d1_1) circle [radius=1.5pt];
\draw[fill=black] (d1_2) circle [radius=1.5pt];
\draw[fill=black] (d1_3) circle [radius=1.5pt];
\draw[fill=black] (d1_4) circle [radius=1.5pt];

\draw(Attach)--(d1_1); \draw(Attach)--(d1_2); \draw(Attach)--(d1_3); \draw(Attach)--(d1_4);
\draw[cliqueedge](d1_1)--(d1_2); \draw[cliqueedge](d1_1)--(d1_3); \draw[cliqueedge](d1_1)--(d1_4); 
\draw[cliqueedge](d1_2)--(d1_3); \draw[cliqueedge](d1_2)--(d1_4); 
\draw[cliqueedge](d1_3)--(d1_4); 

\node at ($ (D1) + (257:0.9) $) {\tiny$...$};
\node at ($ (D1) + (325:0.9) $) {\tiny$...$};
\node at ($ (D1) + (113:0.9) $) {\tiny$...$};

\coordinate (d2_0) at ($ (D2) + (255:0.7) $);
\coordinate (d2_1) at ($ (D2) + (327:0.7) $);
\coordinate (d2_2) at ($ (D2) + (39:0.7) $);
\coordinate (d2_3) at ($ (D2) + (111:0.7) $);
\coordinate (d2_4) at ($ (D2) + (183:0.7) $);

\draw(Attach)--(d2_1); \draw(Attach)--(d2_2); \draw(Attach)--(d2_3); \draw(Attach)--(d2_4);
\draw[cliqueedge](d2_1)--(d2_2); \draw[cliqueedge](d2_1)--(d2_3); \draw[cliqueedge](d2_1)--(d2_4); 
\draw[cliqueedge](d2_2)--(d2_3); \draw[cliqueedge](d2_2)--(d2_4); 
\draw[cliqueedge](d2_3)--(d2_4); 

\node at ($ (D2) + (39:0.9) $) {\tiny$...$};
\node at ($ (D2) + (111:0.9) $) {\tiny$...$};
\node at ($ (D2) + (183:0.95) $) {\tiny$...$};

\draw[fill=black] (d2_1) circle [radius=1.5pt];
\draw[fill=black] (d2_2) circle [radius=1.5pt];
\draw[fill=black] (d2_3) circle [radius=1.5pt];
\draw[fill=black] (d2_4) circle [radius=1.5pt];

\coordinate (Attach2) at (d1_3);

\coordinate (E1) at ($ (Attach2) + (5:1) $);
\coordinate (E2) at ($ (Attach2) + (75:1) $);


\coordinate (e1_0) at ($ (E1) + (185:0.5) $);
\coordinate (e1_1) at ($ (E1) + (257:0.5) $);
\coordinate (e1_2) at ($ (E1) + (329:0.5) $);
\coordinate (e1_3) at ($ (E1) + (41:0.5) $);
\coordinate (e1_4) at ($ (E1) + (113:0.5) $);

\draw[fill=black] (e1_1) circle [radius=1pt];
\draw[fill=black] (e1_2) circle [radius=1pt];
\draw[fill=black] (e1_3) circle [radius=1pt];
\draw[fill=black] (e1_4) circle [radius=1pt];

\draw(Attach2)--(e1_1); \draw(Attach2)--(e1_2); \draw(Attach2)--(e1_3); \draw(Attach2)--(e1_4);
\draw[cliqueedge](e1_1)--(e1_2); \draw[cliqueedge](e1_1)--(e1_3); \draw[cliqueedge](e1_1)--(e1_4); 
\draw[cliqueedge](e1_2)--(e1_3); \draw[cliqueedge](e1_2)--(e1_4); 
\draw[cliqueedge](e1_3)--(e1_4); 

\node at ($ (E1) + (257:0.65) $) {\tiny$...$};
\node at ($ (E1) + (326:0.7) $) {\tiny$...$};
\node at ($ (E1) + (38:0.7) $) {\tiny$...$};
\node at ($ (E1) + (102:0.6) $) {\tiny$...$};

\coordinate (e2_0) at ($ (E2) + (255:0.5) $);
\coordinate (e2_1) at ($ (E2) + (327:0.5) $);
\coordinate (e2_2) at ($ (E2) + (39:0.5) $);
\coordinate (e2_3) at ($ (E2) + (111:0.5) $);
\coordinate (e2_4) at ($ (E2) + (183:0.5) $);

\draw(Attach2)--(e2_1); \draw(Attach2)--(e2_2); \draw(Attach2)--(e2_3); \draw(Attach2)--(e2_4);
\draw[cliqueedge](e2_1)--(e2_2); \draw[cliqueedge](e2_1)--(e2_3); \draw[cliqueedge](e2_1)--(e2_4); 
\draw[cliqueedge](e2_2)--(e2_3); \draw[cliqueedge](e2_2)--(e2_4); 
\draw[cliqueedge](e2_3)--(e2_4); 

\draw[fill=black] (e2_1) circle [radius=1pt];
\draw[fill=black] (e2_2) circle [radius=1pt];
\draw[fill=black] (e2_3) circle [radius=1pt];
\draw[fill=black] (e2_4) circle [radius=1pt];

\node at ($ (E2) + (39:0.7) $) {\tiny$...$};
\node at ($ (E2) + (111:0.65) $) {\tiny$...$};
\node at ($ (E2) + (183:0.7) $) {\tiny$...$};

\end{scope}

\draw[-{Latex}] (3.5,0) -- node[above]{$\pi_e$}(4.7,0);


\begin{scope}[shift={(5.5,0)}]

\node at (2.5,-1.5) {$\whl$};

\coordinate (p0) at (0,0);
\coordinate (p1) at ($ (p0) + (1.5,0) $); 
\coordinate (p2) at ($ (p1) + (1.5,0) $); 
\coordinate (p3) at ($ (p2) + (1.5,0) $); 
\coordinate (etc) at ($ (p3) + (1.2,0) $); 

\node at ($ (p0) + (0,-0.4) $) {\small $0$};
\node at ($ (p1) + (0,-0.4) $) {\small $1$};
\node at ($ (p2) + (0,-0.4) $) {\small $2$};
\node at ($ (p3) + (0,-0.4) $) {\small $3$};

\draw[fill=black] (p0) circle [radius=2pt];
\draw[fill=black] (p1) circle [radius=2pt];
\draw[fill=black] (p2) circle [radius=2pt];
\draw[fill=black] (p3) circle [radius=2pt];

\draw(p0)--node[above]{\tiny{\color{black} $w_2$}}(p1); 
\draw(p1)--node[above]{\tiny{\color{black} $w_4$}}(p2); 
\draw(p2)--node[above]{\tiny{\color{black} $w_6$}}(p3); 
\draw(p3)--(etc); 


\makeatletter
\tikzset{my loop/.style =  {to path={
  \pgfextra{\let\tikztotarget=\tikztostart}
  [looseness=6,min distance=10mm]
  \tikz@to@curve@path}
  }}  
\makeatletter 

\path (p1) edge[my loop] node[above] {\tiny{\color{black} $(\mN-2)w_2$}} (p1);
\path (p2) edge[my loop] node[above] {\tiny{\color{black} $(\mN-2)w_4$}} (p2);
\path (p3) edge[my loop] node[above] {\tiny{\color{black} $(\mN-2)w_6$}} (p3);

\node at (6.2,0) {$\dots$};
\end{scope}
\end{tikzpicture}
\caption{$\mathcal{X}$ covers the weighted path $\whl$ strongly and regularly. The distinguished vertex is $0$.} \label{fig:covering}
\end{figure}

By  iterating this procedure infinitely, all vertices of $\mathcal{X}$ at distance $i$ from $e$ are identified with a single vertex $v_i$. The resulting quotient graph is a weighted half-line $\whl$, which the sequence of graphs $(\mathcal{X}_i)_{i\ge1}$ stabilizes to, and
 the vertex set $\{v_i\mid i\in \Z_{\ge0}\}$ of $\whl$ is naturally identified with $\Z_{\ge0}$. 
\begin{proposition}
 The weight function $w=w_{\whl}$ of $\whl$ is given by, for $0\le m\le n$,
 \begin{equation}\label{weight_fct}
 w(m,n)=w(n,m)=
\begin{cases}
 r(\mN-1)^n(r-1)^{n-1} & \text{if } m=n-1 \\
 r(\mN-1)^n(r-1)^{n-1}(\mN-2) & \text{if } m=n\ge1 \\
 0 & \text{otherwise}.
\end{cases}
\end{equation}
\end{proposition}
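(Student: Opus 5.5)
The plan is to compute the weights of $\whl$ directly from the quotient formula \eqref{qgraph}, without tracking the intermediate graphs $\mathcal{X}_i$. The composite map $\pi_e\colon\mathcal{X}\to\whl$ sends $g$ to $|g|$. Its fibres are the spheres $\mathcal S_n=\{g:|g|=n\}$. Each step $\pi_i$ is a quotient by a finite group of automorphisms. Because the weights of a quotient are sums of the original weights over pairs of orbit representatives, composing these quotients gives
$$
w(m,n)=\sum_{x\in\mathcal S_m,\;y\in\mathcal S_n}\tilde w(x,y),
$$
where $\tilde w$ is the adjacency of $\mathcal X$ (all weights $1$). More precisely, I will show by induction on $i$ that the weight in $\mathcal X_i$ between two vertices equals the sum of $\tilde w$ over their full preimages in $\mathcal X$. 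This holds because the fibre of a composite projection is the union of the fibres. Stabilisation then gives the formula above for $v_m,v_n$.

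Next, count $|\mathcal S_n|$. A reduced word of length $n\ge1$ is a sequence $a_1\cdots a_n$ with $a_k\in G_{i_k}\setminus\{e\}$ and $i_k\neq i_{k+1}$. There are $r$ choices for $i_1$ and $r-1$ choices for each later index, and $\mN-1$ choices of non-identity element in each factor. Hence $|\mathcal S_n|=r(r-1)^{n-1}(\mN-1)^n$.

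Then count edges by type. Fix $y=a_1\cdots a_n\in\mathcal S_n$ with $n\ge1$ and last letter in $G_{i_n}$. Its neighbours are $ya$ with $a\in S$, and there are three cases.
\begin{itemize}
\item If $a\in G_{i_n}$ and $a\neq a_n^{-1}$, then $ya\in\mathcal S_n$; there are $\mN-2$ such neighbours.
\item If $a=a_n^{-1}$, then $ya\in\mathcal S_{n-1}$; there is exactly one such neighbour.
\item If $a\in G_j$ with $j\neq i_n$, then $ya\in\mathcal S_{n+1}$.
\end{itemize}
Summing over $y\in\mathcal S_n$, each vertex contributes exactly one edge down to $\mathcal S_{n-1}$, so $w(n-1,n)=|\mathcal S_n|$. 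Edges inside $\mathcal S_n$ are counted as ordered pairs in \eqref{qgraph}, so $w(n,n)=(\mN-2)|\mathcal S_n|$. Pairs with $|m-n|\ge2$ are never adjacent, since right multiplication by a generator changes word length by at most one. Substituting the count of $|\mathcal S_n|$ gives \eqref{weight_fct}. I will also check $n=1$: here $w(0,1)=r(\mN-1)$, which matches the edges from $e$ to its $r(\mN-1)$ neighbours, and there is no loop at $0$.

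The only delicate point is the convention for loops. One has to confirm that $w(u,u)$ in \eqref{qgraph} counts each unordered edge inside an orbit twice, as ordered pairs. It is consistent with the figures: the loop at $v_1$ is labelled $(\mN-2)w_2$, and $w_2=r(\mN-1)$. This agrees with the sanity check that the degree of $v_n$ equals $|\mathcal S_n|\cdot r(\mN-1)$, namely $w(n-1,n)+w(n,n)+w(n,n+1)=|\mathcal S_n|\bigl(1+(\mN-2)+(r-1)(\mN-1)\bigr)$. As an alternative, the recursion $w_{2i}=(\mN-1)w_{2i-1}$, $w_{2i+1}=(r-1)w_{2i}$, $w_1=r$ can be unwound and yields the same product formula.
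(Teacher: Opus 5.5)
Your proof is correct, but it follows a different route from the paper's.

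The paper stays inside the tower of intermediate quotients $\mathcal X_i$. It reads off from the figures the starting value $w_1=r$, the recursions $w_{2i}=(\mN-1)w_{2i-1}$ and $w_{2i+1}=(r-1)w_{2i}$, and the identifications $w(i-1,i)=w_{2i}$ and $w(i,i)=(\mN-2)w_{2i}$. It then unwinds these to $w_{2i}=r(\mN-1)^i(r-1)^{i-1}$; this is the alternative you mention in your last sentence.

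Your main argument instead collapses the whole tower at once. Iterated quotient weights are additive over full preimages, so $w(m,n)=\sum_{x\in\mathcal S_m,\,y\in\mathcal S_n}\tilde w(x,y)$. You evaluate this sum by counting reduced words, and by sorting the neighbours of $y\in\mathcal S_n$ into one on the sphere below, $\mN-2$ on the same sphere, and $(r-1)(\mN-1)$ on the sphere above.

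What your version buys:
\begin{itemize}
\item It is self-contained and does not depend on reading the labels $w_i$ off pictures.
\item It yields the fibre sizes \eqref{fibers} and the degrees $d_n$ for free.
\item The neighbour counts depend only on $|y|$. That fact on its own shows $|\cdot|$ is a covering with $\lambda(n)\propto 1/|\mathcal S_n|$, with no need for the successive actions of $S_{\mN-1}$ and $S_{r-1}$.
\end{itemize}

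What the paper's version buys: its recursion follows the construction of $\whl$ as a limit of group quotients. That is the construction through which the paper invokes Proposition~\ref{Prop2} to obtain the covering.

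You flagged the right delicate point: loops are counted as ordered pairs in \eqref{qgraph}. Your degree check, together with the label $(\mN-2)w_2$ at $v_1$, confirms that convention.
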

\begin{proof}
From Figs.~\ref{fig:step_even}--\ref{fig:covering}, we have $w_1=r$ and the recurrence equations, for $i\ge1$,
\begin{equation}\label{w1_rec}
 w_{2i}=(\mN-1)w_{2i-1},\quad w_{2i+1}=(r-1)w_{2i}, 
\end{equation}
and the equalities $w(0,0)=0$ and, for $i\ge1$,
\begin{equation}\label{w2_rec}
 w(i-1,i)=w_{2i}, \quad w(i,i)=(\mN-2)w_{2i}.
\end{equation}
Using \eqref{w1_rec}, we obtain, for $i\ge2$,
\begin{align*}
 w_{2i}=(\mN-1)(r-1)w_{2(i-1)},
\end{align*}
with the solution 
$$
w_{2i}=(\mN-1)^{i-1}(r-1)^{i-1}w_2.
$$
From \eqref{w1_rec}, we also have $w_2=(\mN-1)w_1=r(\mN-1)$,
so that 
$$
w_{2i}=r(\mN-1)^{i}(r-1)^{i-1}.
$$
Using this in~\eqref{w2_rec}, we  complete the proof.
\end{proof}

 As the vertex set of $\mathcal{X}$ is $\mathcal{A}$ and that of $\whl$ is $\Z_{\ge0}$, the covering map $\pi_e\colon \mathcal{X} \to \whl$ is identical to the map $|\cdot|\colon \mathcal{A} \to \Z_{\ge0}$, where $|x|$ is the word length of $x\in \mathcal{A}$. 
 
Proposition~\ref{Prop2} implies that $\mathcal{X}$ covers $\whl$. Since $\mathcal{X}$ is vertex-transitive, any vertex can be chosen to map to $0$. Hence $X$ covers $\whl$ strongly and regularly, where the distinguished vertex is $0$, see Fig.~\ref{fig:covering}.

\subsection{The projected Laplacian}
Let $\mathcal{L}=\mathcal{L}_{\whl}$ be the normalized Laplacian on $\whl$ and 
let $\pi_e\colon \mathcal{A} \to \Z_{\ge0}$ be the covering defined above.
By definition of the covering, see Section~\ref{projection}, we have
\begin{equation}\label{fibers}
 |\pi_e^{-1}(n)|=
\begin{cases}
 r(\mN-1)^n(r-1)^{n-1} & \text{if } n\ge1 \\
 1 & \text{if } n=0.
\end{cases}
\end{equation}
The weight function $w$ is given by~\eqref{weight_fct}. 
The degree of a vertex is
$$
d_n=
\begin{cases} 
   r^2(\mN-1)^{n+1}(r-1)^{n-1} &\text{if } n\ge1  \\
   r(\mN-1) &\text{if } n=0,
\end{cases} 
$$
so that the non-zero matrix coefficients of the operator $\mathcal{M}=I-\mathcal{L}$ in~\eqref{operatorM} 
are given by
\begin{align*}
 & \mathcal{M}(n,n)=
 \begin{cases} 
   \frac{\mN-2}{r(\mN-1)} &\text{if } n\ge1  \\
   0 &\text{if } n=0
\end{cases} 
  \\
 & \mathcal{M}(n-1,n)=\mathcal{M}(n,n-1)=
\begin{cases} 
   \frac{\sqrt{r-1}}{r\sqrt{\mN-1}} &\text{if } n\ge2 \\
   \frac{1}{\sqrt{r(\mN-1)}} &\text{if } n=1.
\end{cases} 
\end{align*}
In this way, we obtain the following.
\begin{proposition} \label{propM}
The operator $\mathcal{M}$ of the graph $\whl$ acts explicitly on a function $f\colon\Z_{\ge0} \to \C$, as follows:
 for $n\geq 2$, 
\begin{equation} 
 (\mathcal{M}f)(n)=\frac{1}{r(\mN-1)}\Big( \sqrt{(r-1)(\mN-1)}(f(n-1)+f(n+1))+(\mN-2)f(n) \Big)
\end{equation}
and
\begin{align}
 & (\mathcal{M}f)(1)=\frac{1}{r(\mN-1)}\Big( \sqrt{r(\mN-1)}f(0)+(\mN-2)f(1))+\sqrt{(r-1)(\mN-1)}f(2) \Big) \\
 & (\mathcal{M}f)(0)=\frac{1}{\sqrt{r(\mN-1)}}f(1).
\end{align}
\end{proposition}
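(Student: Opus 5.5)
The proof is a direct evaluation of the definition $(\mathcal{M}f)(n)=\sum_{m}\mathcal{M}(n,m)f(m)$, using the matrix coefficients $\mathcal{M}(u,v)=w(u,v)/\sqrt{d_ud_v}$ from \eqref{operatorM_mat}. By \eqref{weight_fct}, the only nonzero weights at a vertex $n$ are $w(n,n\pm1)$ and, for $n\ge1$, the loop weight $w(n,n)$. So $(\mathcal{M}f)(n)$ has at most three terms, and only the coefficients $\mathcal{M}(n,n)$ and $\mathcal{M}(n-1,n)=\mathcal{M}(n,n-1)$ need to be computed.

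\textbf{Step 1: degrees.} I compute $d_n=w(n,n-1)+w(n,n)+w(n,n+1)$ from \eqref{weight_fct}. For $n\ge1$ this is
\[
r(\mN-1)^{n}(r-1)^{n-1}\bigl(1+(\mN-2)\bigr)+r(\mN-1)^{n+1}(r-1)^{n}=r(\mN-1)^{n+1}(r-1)^{n-1}\bigl(1+(r-1)\bigr),
\]
which equals $r^2(\mN-1)^{n+1}(r-1)^{n-1}$. For $n=0$ the degree is just $w(0,1)=r(\mN-1)$.

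\textbf{Step 2: diagonal coefficients.} Dividing, $\mathcal{M}(n,n)=w(n,n)/d_n=(\mN-2)/(r(\mN-1))$ for $n\ge1$, and $\mathcal{M}(0,0)=0$.

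\textbf{Step 3: off-diagonal coefficients.} For $n\ge2$, the product of degrees is $d_{n-1}d_n=r^4(\mN-1)^{2n+1}(r-1)^{2n-3}$. Hence
\[
\mathcal{M}(n-1,n)=\frac{r(\mN-1)^n(r-1)^{n-1}}{r^2(\mN-1)^{n+1/2}(r-1)^{n-3/2}}=\frac{\sqrt{r-1}}{r\sqrt{\mN-1}}=\frac{\sqrt{(r-1)(\mN-1)}}{r(\mN-1)}.
\]
For $n=1$, $d_0d_1=r(\mN-1)\cdot r^2(\mN-1)^2=r^3(\mN-1)^3$, so
\[
\mathcal{M}(0,1)=\frac{r(\mN-1)}{r^{3/2}(\mN-1)^{3/2}}=\frac{1}{\sqrt{r(\mN-1)}}=\frac{\sqrt{r(\mN-1)}}{r(\mN-1)}.
\]

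\textbf{Step 4: assembling the three cases.} For $n\ge2$, both neighbours $n\pm1$ lie in the generic regime of Step 3, and the loop contributes as in Step 2; this gives the first formula. At $n=1$, the left coefficient is $\mathcal{M}(1,0)$ from the $n=1$ case of Step 3, while the right coefficient $\mathcal{M}(1,2)$ is the generic one. At $n=0$ there is no loop, so only the term $\mathcal{M}(0,1)f(1)$ survives. Factoring out $1/(r(\mN-1))$ gives exactly the stated expressions.

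There is no real obstacle here. The only delicate point is bookkeeping the exponent of $(r-1)$ at the boundary: the formula $r(\mN-1)^n(r-1)^{n-1}$ for $w(n-1,n)$ has to be evaluated correctly at $n=1$, where it gives $r(\mN-1)$, and $d_0$ must be treated separately because the degree formula for $n\ge1$ does not apply at $n=0$.
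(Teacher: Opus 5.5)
Your proposal is correct and follows the paper's route: compute the degrees $d_n$ from the weight function, derive the coefficients $\mathcal{M}(n,n)$ and $\mathcal{M}(n-1,n)$, treating the boundary case $n=1$ separately, and then assemble the three cases. Your arithmetic checks out, including $d_n=r^2(\mN-1)^{n+1}(r-1)^{n-1}$ for $n\ge1$ and $d_0=r(\mN-1)$.
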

\begin{remark}
 Recall that $\mathcal{M}$ is a self-adjoint operator on the Hilbert space $\ell^2(\Z_{\ge0})$ with respect to the standard scalar product.
\end{remark}

\subsection{Spectral analysis of the projected Laplacian}
As a step towards the proof of Theorem~\ref{K1}, we solve the spectral problem for the self-adjoint operator $\mathcal{L}$ in the Hilbert space $\ell^2(\Z_{\ge0})$.
By~\ref{operatorM}, the spectral problem for $\mathcal{L}$ directly translates to that of $\mathcal{M}$. 

\subsubsection{Eigenvalues and eigenfunctions}
\begin{proposition}\label{spectrum_L^pr1}
The $\pi_e$-projected Laplacian $\mathcal{L}$ of $\whl$ has at most one eigenvalue, and it has one eigenvalue if and only if $\mN>r$, given by
\begin{equation}
  \lambda_*=1-\mu_*=\frac{\mN}{\mN-1}
\end{equation}
 with the corresponding real $\ell^2$-eigenfunction given by
\begin{equation}\label{disc_efct}
 f_*(n)=
\begin{cases}
 (-1)^n \sqrt{\frac{r(\mN-r)}{p(r-1)}} \sqrt{\frac{r-1}{\mN-1}}^n & \text{if } n\ge1 \\
  \sqrt{\frac{\mN-r}{\mN}} & \text{if } n=0.
\end{cases}
\end{equation}
It also has the following generalized eigenvalues, parameterized by $x\in(0,\pi)$,
\begin{equation} \label{continuous_spectrum}
 \lambda_x=1-\mu_x=1-\frac{\mN-2+2\cos(x)\sqrt{(r-1)(\mN-1)}}{r(\mN-1)}.
\end{equation}
The corresponding real generalized eigenfunctions are given by
\begin{equation} \label{gen_efct}
 f_x(n)=
\begin{cases}
 (r-1)\sin((n+1)x)+(\mN-2)\sqrt{\frac{r-1}{\mN-1}}\sin(nx)-\sin((n-1)x) & \text{if } n\ge1 \\
 \sqrt{r(r-1)}\sin(x) & \text{if } n=0.
\end{cases}
\end{equation}
\end{proposition}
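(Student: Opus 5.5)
The eigenvalue equation $\mathcal{M}f=\mu f$ is a second-order linear recurrence with constant coefficients for $n\ge 2$, plus two boundary equations at $n=0,1$. I will solve the bulk recurrence explicitly and then impose the boundary conditions.

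\textbf{Step 1: the bulk recurrence.} Write $a:=\sqrt{(r-1)(\mN-1)}$ and $b:=\mN-2$. By Proposition~\ref{propM}, the equation at $n\ge2$ reads
$$
a\bigl(f(n-1)+f(n+1)\bigr)+bf(n)=r(\mN-1)\mu f(n).
$$
Its characteristic equation is $a(z+z^{-1})+b=r(\mN-1)\mu$, so the roots satisfy $z_1z_2=1$. Thus for $n\ge1$ we have $f(n)=Az^n+Bz^{-n}$, valid from $n=1$ on because the equation at $n=2$ already involves $f(1)$.

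\textbf{Step 2: boundary conditions.} The equation at $n=0$ gives
$$
f(0)=\frac{f(1)}{\mu\sqrt{r(\mN-1)}}.
$$
Substituting this into the equation at $n=1$ yields a single linear condition on $(A,B)$, depending on $z$.

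\textbf{Step 3: eigenvalues.} An $\ell^2$ solution requires $|z|<1$ and $B=0$, after swapping roots if necessary. So I impose the $n=1$ condition on $f(n)=z^n$ for $n\ge1$ and eliminate $\mu$ via $r(\mN-1)\mu=a(z+z^{-1})+b$. I expect this to reduce to a quadratic in $z$ which factors, with one spurious root (presumably $z=0$, or a root with $|z|\ge 1$) and one genuine root $z=-\sqrt{(r-1)/(\mN-1)}$. That root has modulus $<1$ iff $\mN>r$. Then
$$
\mu_*=\frac{a(z+z^{-1})+b}{r(\mN-1)}=-\frac{1}{\mN-1},
$$
which gives $\lambda_*=\mN/(\mN-1)$. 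Normalizing $\sum_n f_*(n)^2=1$ is a geometric series computation and produces the constants in \eqref{disc_efct}. I read the $p$ in the displayed normalisation as $\mN-1$ and will check this against the $\ell^2$ norm. This factoring step is where I expect the main algebraic work.

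\textbf{Step 4: generalized eigenfunctions.} For $|z|=1$, write $z=e^{ix}$ with $x\in(0,\pi)$, so $\mu_x=(b+2a\cos x)/(r(\mN-1))$, as in \eqref{continuous_spectrum}. A real solution is a combination of $\sin(nx)$ and $\cos(nx)$. Rather than solving for $(A,B)$, I will verify directly that the ansatz \eqref{gen_efct} works.

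For $n\ge1$, $f_x(n)$ is a linear combination of $\sin((n+1)x)$, $\sin(nx)$ and $\sin((n-1)x)$, each of which solves the bulk recurrence. Hence only the equations at $n=0$ and $n=1$ need checking. These amount to trigonometric identities: use $\sin((n+1)x)+\sin((n-1)x)=2\cos x\sin(nx)$ and the value $f_x(0)=\sqrt{r(r-1)}\sin x$.

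\textbf{Step 5: no other eigenvalues.} For $|z|=1$ the solutions $f_x$ are not in $\ell^2$. When $\mN\le r$, the root found in Step 3 has $|z|\ge1$, so it gives no eigenfunction either. Therefore $\lambda_*$ is the only possible eigenvalue, and it occurs if and only if $\mN>r$.
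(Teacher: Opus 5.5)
Your proposal is correct and is essentially the paper's proof. It uses the same two-root solution of the bulk recurrence, and the boundary equations reduce to a quadratic with roots $z=-\sqrt{(r-1)/(\mN-1)}$ and $z=\sqrt{(r-1)(\mN-1)}\ge 1$. These are the reciprocals of the paper's $\xi_-,\xi_+$, so the spurious root is the second one, not $z=0$. In the $|z|=1$ case you verify the stated $f_x$ directly, where the paper solves for the coefficients. Your division by $\mu$ in Step~2 is harmless, since for $\mu=0$ the $n=0$ equation would force $z=0$; the paper avoids it by solving the $n=1$ equation for $f(0)$ first.

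The one thing to fix is your reading of the typo: the normalization gives $p=\mN$, not $\mN-1$, since $\|f_*\|^2=C^2\,\mN(r-1)/(r(\mN-r))$ with $C$ the prefactor of the $n\ge1$ entries. The statement itself already forces this: the $n=0$ equation gives $f_*(0)=\sqrt{(r-1)/r}\,C$, and $\sqrt{(r-1)/r}\cdot\sqrt{r(\mN-r)/(p(r-1))}=\sqrt{(\mN-r)/\mN}$ holds only for $p=\mN$.
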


\begin{proof}
Using Proposition~\ref{propM}, the equation $\mathcal{M}f=\mu f$ is equivalent to the following system of difference equations:
 \begin{align} { }
& (r(\mN-1)\mu-(\mN-2))f(n)=\sqrt{(r-1)(\mN-1)}(f(n-1)+f(n+1)), \ n\ge2 \label{l1} \\
& (r(\mN-1)\mu-(\mN-2))f(1)=\sqrt{r(\mN-1)}f(0)+\sqrt{(r-1)(\mN-1)}f(2) \label{l2} \\
& \mu f(0)=\frac{1}{\sqrt{r(\mN-1)}}f(1). \label{l3}
\end{align}
Since~\eqref{l1} is a second-order linear recurrence with constant coefficients, we look for solutions of the form $f(n)=\xi^n$, which leads to the spectral (dispersion) relation
\begin{equation}\label{speceq1}
\mu=\frac{\mN-2+\sqrt{(r-1)(\mN-1)}(\xi^{-1}+\xi)}{r(\mN-1)}.
\end{equation}
Note that the right-hand side of \eqref{speceq1} depends only on $\xi+\xi^{-1}$, so if $\xi$ is a solution, so is $\xi^{-1}$. 
Therefore the general solution of~\eqref{l1} is of the form, for $n\ge1$,
\begin{equation}\label{gensol}
 f(n)=\alpha\xi^n+\beta\xi^{-n}, \quad \alpha,\beta\in\C.
\end{equation}
Substituting~\eqref{gensol} into the boundary equation~\eqref{l2} and using~\eqref{speceq1}, we solve for $f(0)$ to obtain
\begin{equation}\label{f0}
 f(0)=\sqrt{\frac{r-1}{r}}(\alpha+\beta).
\end{equation}
Using~\eqref{speceq1}--\eqref{f0} 
in~\eqref{l3}, we obtain
\begin{align}
 \alpha \Big(\xi-(\mN-2)\sqrt{\frac{r-1}{\mN-1}}-(r-1)\xi^{-1}\Big)=\beta \Big(-\xi^{-1}+(\mN-2)\sqrt{\frac{r-1}{\mN-1}}+(r-1)\xi\Big). \label{hom_sys}
\end{align}
We distinguish the cases $\vert \xi \vert \ne1$ and $\vert \xi \vert=1$.
\vskip 0.3cm
{\it The case $\vert \xi \vert \ne1$.}
Since~\eqref{speceq1} is invariant under the change $\xi \leftrightarrow \xi^{-1}$, assume, without loss of generality, that $\vert \xi \vert>1$. 
The condition $f\in \ell^2(\Z_{\ge0})$ forces $\alpha=0$.
Hence, from~\eqref{hom_sys}, we obtain
$$
-\xi^{-1}+(\mN-2)\sqrt{\frac{r-1}{\mN-1}}+(r-1)\xi=0 \ \iff \  \xi^2+\frac{(\mN-2)}{\sqrt{(r-1)(\mN-1)}}\xi=\frac{1}{r-1},
$$
with solutions
$$
\xi_{\pm}=\frac{2-\mN\pm \mN}{2\sqrt{(r-1)(\mN-1)}}.
$$
Observe that $|\xi_{+}|\le 1$ for all $\mN,r\ge2$, 
while $|\xi_{-}|>1$ if and only if $\mN>r$. Thus, for $\mN>r$, there exists a unique solution with $|\xi|>1$, given by
\begin{equation}\label{sol_xi}
 \xi_{-}=\frac{1-\mN}{\sqrt{(r-1)(\mN-1)}}=-\sqrt{\frac{\mN-1}{r-1}}.
\end{equation}
For $\mN>r$, substituting $\xi_{-}$ into~\eqref{speceq1} gives the eigenvalue
\begin{equation}
 \mu=\mu_*:=\frac{1}{1-\mN}.
\end{equation}
Using $\alpha=0$ and $\xi=\xi_{-}$, the corresponding $\ell^2$-eigenfunction is, up to a scalar $\beta$, given by
\begin{align*}
& f_*(0)=\beta \sqrt{\frac{r-1}{r}}, \quad f_*(n)=\beta \xi^{-n}=(-1)^n\beta \sqrt{\frac{r-1}{\mN-1}}^n, \quad n\ge1.
\end{align*}
Since $\mN>r$, summing the geometric series gives
$$
\|f_*\|^2_{\ell^2(\Z_{\ge0})}
=|\beta|^2 \frac{\mN(r-1)}{r(\mN-r)}.
$$ 
Thus, choosing $\beta=\sqrt{\frac{r(\mN-r)}{\mN(r-1)}}$ gives the normalized eigenfunction~\eqref{disc_efct}.

\vskip 0.3cm
{\it The case $\vert \xi \vert=1$.}
 In this case, write $\xi=e^{ix}$. Using the symmetry of the spectral equation under $\xi \leftrightarrow \xi^{-1}$, we can always assume that $x\in(0,\pi)$. 
Then, \eqref{speceq1} gives
\begin{equation}\label{mu_x}
 \mu_x=\frac{\mN-2+2\cos x\sqrt{(r-1)(\mN-1)}}{r(\mN-1)}.
\end{equation}
Solving~\eqref{hom_sys} determines~$\alpha$ and $\beta$ up to a common factor $\gamma$. Substituting these solutions into~\eqref{gensol}, we obtain, for $n\ge1$,
\begin{align*}
 f_x(n) 
 & = 2i\gamma \Big((r-1)\sin((n+1)x)+(\mN-2)\sqrt{\frac{r-1}{\mN-1}}\sin(nx)-\sin((n-1)x) \Big).
\end{align*}
Substituting the same solutions
into~\eqref{f0}, we obtain
\begin{align*}
 f_x(0) 
 =2i\gamma\sqrt{r(r-1)} \sin(x).
\end{align*}
Choosing $\gamma=-\frac{i}{2}$, we obtain the generalized eigenfunctions in~\eqref{gen_efct}, corresponding to the generalized eigenvalues $\mu_x$ in~\eqref{mu_x}.
\end{proof}

\subsubsection{Scalar products of (generalized) eigenfunctions}
The scalar products of the generalized eigenfunctions, interpreted in a distributional sense, allow us
to determine the spectral measure associated with this operator. 

\begin{proposition}\label{scalar_products1}
We have the following scalar products, for $x,y\in(0,\pi)$,
\begin{equation}\label{sc_pr_H}
  \langle f_x,  f_y \rangle=H(x)\delta(x-y)
\end{equation}
where 
\begin{equation}\label{Hx}
 H(x):= \frac{\pi}{2} \Big(\big((\mN-2)\sqrt{\frac{r-1}{\mN-1}}+(r-2)\cos x\big)^2+(r\sin x)^2 \Big),
\end{equation}
and $\delta(x)$ is Dirac's delta function.
\end{proposition}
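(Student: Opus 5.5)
We compute $\langle f_x,f_y\rangle=\sum_{n\ge0}f_x(n)f_y(n)$ as a distribution in $(x,y)\in(0,\pi)^2$. The plan is to reduce the series to Fourier sums $\sum_n \cos(nz)$ and use $\sum_{n\in\Z}e^{inz}=2\pi\sum_k\delta(z-2\pi k)$. Only $\delta(x-y)$ survives, since $x+y\in(0,2\pi)$ and $x-y\in(-\pi,\pi)$.

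\textbf{Step 1 (exponential form).} For $n\ge1$ write
$f_x(n)=\frac{1}{2i}\bigl(A(x)e^{inx}-\overline{A(x)}e^{-inx}\bigr)$ with
$$A(x)=(r-1)e^{ix}+(\mN-2)\sqrt{\tfrac{r-1}{\mN-1}}-e^{-ix}.$$
This holds because $\sin((n+k)x)=\operatorname{Im}(e^{inx}e^{ikx})$ and $A$ collects the coefficients $e^{ikx}$. Then
$$f_x(n)f_y(n)=-\tfrac14\Bigl(A(x)A(y)e^{in(x+y)}+\overline{A(x)A(y)}e^{-in(x+y)}-A(x)\overline{A(y)}e^{in(x-y)}-\overline{A(x)}A(y)e^{-in(x-y)}\Bigr).$$
The whole $n=0$ term and the $n=0$ value of this expression are smooth functions of $(x,y)$.

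\textbf{Step 2 (singular part).} For $z\in(-2\pi,2\pi)$ one has, distributionally, $\sum_{n\ge1}e^{inz}=\pi\delta(z)+\tfrac12\bigl(-1+i\cot(z/2)\bigr)$, the cotangent being a principal value. On $(0,\pi)^2$ the terms in $e^{\pm in(x+y)}$ therefore produce no delta. The terms in $e^{\pm in(x-y)}$ give
$$\tfrac14\bigl(|A(x)|^2+|A(x)|^2\bigr)\pi\,\delta(x-y)=\tfrac{\pi}{2}|A(x)|^2\delta(x-y),$$
since at $x=y$ we have $A(x)\overline{A(y)}=\overline{A(x)}A(y)=|A(x)|^2$.

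\textbf{Step 3 (the modulus).} Compute the real and imaginary parts of $A(x)$:
$$\operatorname{Re}A=(\mN-2)\sqrt{\tfrac{r-1}{\mN-1}}+(r-2)\cos x,\qquad \operatorname{Im}A=r\sin x.$$
Hence $\tfrac{\pi}{2}|A(x)|^2=H(x)$, which is \eqref{Hx}.

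\textbf{Main obstacle: the remainder.} Step 2 leaves a non-delta part: principal-value cotangent kernels in $x\pm y$, together with the $n=0$ contributions. This remainder must vanish identically; it is not enough that it is merely regular. A nonzero regular kernel would contradict orthogonality of generalized eigenfunctions for distinct eigenvalues, because $\mu_x$ is injective on $(0,\pi)$. To make this rigorous I would verify the cancellation directly, as follows.
\begin{itemize}
\item Write the remainder as a rational function of $e^{ix},e^{iy}$.
\item Use the boundary relations \eqref{f0} and \eqref{hom_sys}, which fix $f_x(0)=\sqrt{r(r-1)}\sin x$ in terms of $A$.
\item Check that the combination vanishes. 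I expect this to reduce to $\operatorname{Im}\bigl(A(x)\overline{A(y)}\cdots\bigr)$ identities, using $\cot\frac{x-y}{2}$ and $\cot\frac{x+y}{2}$ partial fractions.
\end{itemize}
Alternatively, one can run a discrete Green's identity, i.e.\ a Wronskian argument. Truncate the sum at $N$, use $\mathcal M f_x=\mu_x f_x$ and self-adjointness, and get
$$(\mu_x-\mu_y)\sum_{n\le N}f_x(n)f_y(n)=c\bigl(f_x(N)f_y(N+1)-f_x(N+1)f_y(N)\bigr),$$
with no boundary term at $0$ thanks to \eqref{l2}--\eqref{l3}. Taking the weak limit $N\to\infty$ via the Riemann–Lebesgue lemma gives a result supported on $x=y$, with the same coefficient $H(x)$. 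This is the route I would try first for the cancellation.
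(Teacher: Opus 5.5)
Your Steps 1--3 are correct, and they take a different route from the paper. The paper stays with real sines. It expands $f_x(n)f_y(n)$ into nine sine products, groups them into symmetric pairs, and evaluates each pair with \eqref{n,n}, \eqref{n,n+1}, \eqref{n+1,n-1}, all of which come from $\sum_{n\ge0}\cos nt=\frac12+\pi\sum_k\delta(t+2\pi k)$. In that bookkeeping no principal-value kernels ever appear. The only non-delta terms are $-(r-1)^2\sin x\sin y$ and $-(r-1)\sin x\sin y$, produced by reindexing and by \eqref{n+1,n-1}. They cancel the $n=0$ term because $r(r-1)-(r-1)^2-(r-1)=0$. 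The delta coefficient then comes out as a quadratic in $\cos x$, which the paper rearranges into a sum of squares.

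Your factorization $f_x(n)=\frac{1}{2i}\bigl(A(x)e^{inx}-\overline{A(x)}e^{-inx}\bigr)$ gives $H(x)=\frac{\pi}{2}|A(x)|^2$ at once. This explains why $H$ is a sum of two squares. In the notation of Appendix~\ref{appB} it also already contains the factorization $A(x)=\alpha^{-1}e^{-ix}(e^{ix}-\alpha)(1+\beta e^{ix})$ that underlies \eqref{newH(x)}. The price is that your non-delta part contains $\cot\frac{x\pm y}{2}$ kernels, so showing that it vanishes is no longer a one-line count.

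That vanishing is the one step you leave unexecuted. Your orthogonality heuristic cannot replace it, because orthogonality of non-$\ell^2$ generalized eigenfunctions for distinct $\mu_x$ is part of what is being proved. Both of your routes do close the gap.

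\textbf{Direct route.} Set $P=A(x)A(y)$ and $Q=A(x)\overline{A(y)}$. The non-delta part of $\sum_{n\ge1}$ is
\[
\tfrac14\bigl(\operatorname{Re}P-\operatorname{Re}Q+\cot\tfrac{x+y}{2}\operatorname{Im}P-\cot\tfrac{x-y}{2}\operatorname{Im}Q\bigr).
\]
This is smooth on $(0,\pi)^2$, since $\operatorname{Im}Q$ vanishes on the diagonal. The terms involving the constant coefficient of $A$ drop out. What remains is $-(r-1)^2\sin x\sin y-(r-1)\sin x\sin y=-r(r-1)\sin x\sin y$, which cancels $f_x(0)f_y(0)$.

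\textbf{Wronskian route.} This one is cleaner. For $N\ge1$ you have $\mathcal M(N,N+1)/(\mu_x-\mu_y)=1/(2(\cos x-\cos y))$, so you get the exact identity
\[
\sum_{n=0}^{N} f_x(n)f_y(n)=\frac{f_x(N+1)f_y(N)-f_x(N)f_y(N+1)}{2(\cos x-\cos y)}.
\]
In the numerator, the $e^{\pm iN(x+y)}$ terms carry the factor $e^{ix}-e^{iy}$. After division they therefore have smooth coefficients, and Riemann--Lebesgue removes them. The $e^{\pm iN(x-y)}$ terms equal $\frac12|A(x)|^2\frac{\sin N(x-y)}{x-y}$ plus oscillatory terms with smooth coefficients. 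The limit is therefore exactly $\frac{\pi}{2}|A(x)|^2\delta(x-y)$. Note that Riemann--Lebesgue alone does not produce the delta; it comes from the Dirichlet-kernel limit $\frac{\sin Nt}{t}\to\pi\delta(t)$, which you should state explicitly.

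Alternatively, the same identity shows that the partial sums tend weakly to $0$ off the diagonal. Your smooth remainder from Step 2 therefore vanishes off the diagonal, and by continuity it vanishes everywhere.
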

\begin{proof}
 
Let $x,y \in (0,\pi)$. Using~\eqref{gen_efct}, we have 
\begin{align*}
 \langle f_x, f_y \rangle 
& = r(r-1)\sin x \sin y \notag \\
& \quad + \sum_{n\ge1} \Big( (r-1)\sin((n+1)x)+(\mN-2)\sqrt{\frac{r-1}{\mN-1}}\sin nx-\sin((n-1)x) \Big) \\ 
& \quad \quad \quad \quad \times \Big( (r-1)\sin((n+1)y)+(\mN-2)\sqrt{\frac{r-1}{\mN-1}}\sin ny-\sin((n-1)y) \notag \Big).
 \end{align*}
We expand the product and evaluate the resulting sums separately.
Using the trigonometric identity
\begin{equation} \label{2sin_id}
  2\sin a\sin b=\cos(a-b)-\cos(a+b),
\end{equation}
 the distributional form of the Poisson summation formula,
\begin{equation}\label{poisson_summation}
 \sum_{n\in\Z}e^{2\pi inx}=\sum_{k\in\Z}\delta(x+k),
\end{equation}
and the equality, for any continuous function $g$,
\begin{equation} \label{delta_cont_fct}
 g(x)\delta(x-y)=g(y)\delta(x-y),
\end{equation}
we have the following distributional identities, for $x,y\in(0,\pi)$,
\begin{align} 
& \sum_{n\ge0} \sin nx \sin ny =\frac{\pi}{2} \delta(x-y), \label{n,n} \\ 
& \sum_{n\ge0} \Big(\sin nx \sin((n+1)y) + \sin((n+1)x)\sin(ny) \Big) =\pi \cos x \delta(x-y), \label{n,n+1} \\
& \sum_{n\ge0} \Big(\sin((n+1)x) \sin((n-1)y) + \sin((n-1)x)\sin((n+1)y) \Big) \label{n+1,n-1} \\
& \qquad \qquad \qquad \qquad \qquad \qquad \qquad \qquad \qquad \qquad \quad 
=\pi\cos(2x)\delta(x-y)-\sin x \sin y.  \notag
\end{align}
Proofs of~\eqref{n,n} and \eqref{n,n+1} can be found in \cite{MR5101585}, while~\eqref{n+1,n-1} is proved in Appendix~\ref{appA}. 
Using~\eqref{n,n}--\eqref{n+1,n-1}, and reindexing 
where necessary, we obtain
\begin{align*}
  \langle f_x, f_y \rangle &= r(r-1)\sin x \sin y + \frac{(\mN-2)^2(r-1)}{\mN-1} \frac{\pi}{2}\delta(x-y) 
  +(r-1)^2\Big(\frac{\pi}{2} \delta(x-y)-\sin x\sin y\Big) \\
  &\quad + \frac{\pi}{2}\delta(x-y)
  +(r-1)(\mN-2)\sqrt{\frac{r-1}{\mN-1}}\pi\cos x \delta(x-y) \\
  & \quad -(\mN-2)\sqrt{\frac{r-1}{\mN-1}}\pi\cos x \delta(x-y)
   -(r-1) \Big(\pi\cos(2x)\delta(x-y) + \sin x\sin y \Big).
\end{align*}
The terms containing $\sin x \sin y$ cancel. Thus, we obtain
\begin{equation*}
  \langle f_x, f_y \rangle = \frac{\pi}{2} A(x)\delta(x-y),
\end{equation*}
where 
\begin{align*}
 A(x) & :=\frac{(\mN-2)^2(r-1)}{\mN-1}+(r-1)^2+1+2(r-2)(\mN-2)\sqrt{\frac{r-1}{\mN-1}}\cos x-2(r-1)\cos(2x).
\end{align*}
Using $\cos(2x)=2\cos^2 x-1$, this simplifies to 
\begin{align*}
  A(x) 
  &= \Big((\mN-2)\sqrt{\frac{r-1}{\mN-1}}+(r-2)\cos x\Big)^2 +(r\sin x)^2, 
\end{align*}
proving~\eqref{sc_pr_H} and \eqref{Hx}.
\end{proof}

\subsubsection{Completeness}
\begin{proposition}\label{completeness1_prop}  
The following completeness condition is satisfied: for all $m,n\in\Z_{\ge0}$, 
\begin{equation}\label{completeness1}
\delta_{m,n}=\int_0^\pi f_x(m)f_x(n)\frac{\mathrm{d}x}{H(x)}+
\begin{cases}
 f_*(m)f_*(n)&\text{if } \mN>r\\
 0&\text{if } \mN\le r
\end{cases},
\end{equation}
where $f_*$, $f_{x}$ are given in Proposition~\ref{spectrum_L^pr1}, and $H(x)$ is given in Proposition~\ref{scalar_products1}.

\end{proposition}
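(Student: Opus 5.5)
Our plan is to verify the completeness relation directly, by writing the generalized eigenfunctions in exponential form and evaluating the $x$-integral by residues. We write $a:=(\mN-2)\sqrt{\frac{r-1}{\mN-1}}$ and $\xi=e^{ix}$. For $n\ge1$ the generalized eigenfunction \eqref{gen_efct} becomes
$$
f_x(n)=\frac{1}{2i}\bigl(\xi^n P(\xi)-\xi^{-n}P(\xi^{-1})\bigr),\qquad P(\xi):=(r-1)\xi+a-\xi^{-1},
$$
and $f_x(0)=\sqrt{r(r-1)}\,\frac{\xi-\xi^{-1}}{2i}$. The denominator factors as $H(x)=\frac{\pi}{2}P(\xi)P(\xi^{-1})$. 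This is consistent with Proposition~\ref{scalar_products1}, since $P(\xi)P(\bar\xi)=|P(\xi)|^2$ and the real and imaginary parts of $P(e^{ix})$ are $a+(r-2)\cos x$ and $r\sin x$. The integrand $f_x(m)f_x(n)/H(x)$ is even in $x$, so $\int_0^\pi=\frac12\int_{-\pi}^{\pi}$. We will then rewrite it as a contour integral over $|\xi|=1$ with $\mathrm{d}x=\frac{\mathrm{d}\xi}{i\xi}$.

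For $m,n\ge1$, expanding the product gives four terms. The two cross terms $-\xi^{\pm(n-m)}P(\xi^{\pm1})P(\xi^{\mp1})/(PP^{-})$ simplify to plain monomials $\xi^{\pm(n-m)}$. Their integrals give $\frac{1}{\pi}\cdot\frac{-1}{4}\cdot(-2)\cdot 2\pi\,\delta_{m,n}/2$; we will fix the constants carefully so that the total is exactly $\delta_{m,n}$. The two diagonal terms are $\xi^{\pm(m+n)}P(\xi^{\pm1})/P(\xi^{\mp1})$. By the symmetry $\xi\mapsto\xi^{-1}$ they contribute equally, so it suffices to compute
$$
\frac{-1}{2\pi}\oint_{|\xi|=1}\xi^{m+n}\frac{P(\xi)}{P(\xi^{-1})}\frac{\mathrm{d}\xi}{i\xi}.
$$
Here $P(\xi^{-1})=\xi^{-1}\bigl((r-1)+a\xi-\xi^2\bigr)$, whose zeros are the reciprocals of the roots $\xi_\pm$ found in the proof of Proposition~\ref{spectrum_L^pr1}. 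One root, $1/\xi_+$, always lies outside the unit disk. The other, $1/\xi_-=-\sqrt{\frac{r-1}{\mN-1}}$, lies inside exactly when $\mN>r$. The factor $\xi^{m+n}$ with $m+n\ge2$ cancels the pole at $0$ coming from $P(\xi)\xi^{-1}$ and $1/\xi$.

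The conclusion is then as follows. If $\mN\le r$ the diagonal terms vanish and we get $\delta_{m,n}$. If $\mN>r$ the single residue at $\xi=1/\xi_-$ produces a term proportional to $(1/\xi_-)^{m+n}=(-1)^{m+n}\sqrt{\frac{r-1}{\mN-1}}^{\,m+n}$. We will check that this term equals $-f_*(m)f_*(n)$ with the normalization of \eqref{disc_efct}. The boundary cases $m=0$ or $n=0$ follow the same scheme. Here $f_x(0)$ gives $(\xi-\xi^{-1})$, and the identity $(\xi-\xi^{-1})\sqrt{r(r-1)}$ versus $P$ is used (note $P(\xi)-P(\xi^{-1})=r(\xi-\xi^{-1})$). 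We will verify separately the small cases $m+n\le1$, where a residue at $\xi=0$ may appear. For $m=n=0$ this residue, together with the $\mN>r$ residue, must reproduce $1-f_*(0)^2=\frac{r}{\mN}$.

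The main obstacle is the bookkeeping of residues. This includes the possible pole at the origin for small $m+n$, the exact constant matching the normalization of $f_*$, and making sure the case $\mN=r$ is handled correctly: then $|1/\xi_-|=1$ and the pole lies on the contour. Since $H$ vanishes at $x=\pi$ there, we will treat $\mN=r$ by continuity in the parameter, or as a principal-value limit, checking that the residue weight $f_*(m)f_*(n)$ tends to $0$ as $\mN\downarrow r$. Indeed $f_*\propto\sqrt{\mN-r}$, which supports this continuity argument.
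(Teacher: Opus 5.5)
Your proposal is correct, but it is organized differently from the paper's proof. The paper does not work entry by entry. It multiplies \eqref{completeness1} by $u^mv^n$ and passes to the generating functions $g_x(w)=\sum_{n\ge0}f_x(n)w^n$ and $g_*(w)$. It then checks the single identity $\frac{1}{1-uv}=\int_0^\pi g_x(u)g_x(v)\frac{\mathrm{d}x}{H(x)}+g_*(u)g_*(v)$ by summing residues at $0$, $\alpha$, $-\beta$ (or $-\beta^{-1}$), $u$, $v$, followed by a simplification it does not display.

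Your factorization $H=\frac{\pi}{2}P(\xi)P(\xi^{-1})$ is the paper's \eqref{newH} in another form. With $\alpha,\beta$ as in Appendix~\ref{appB},
\[
P(\xi)=\frac{(\xi-\alpha)(1+\beta\xi)}{\alpha\xi},\qquad \frac{P(\xi)}{P(\xi^{-1})}=\frac{(\xi-\alpha)(1+\beta\xi)}{(1-\alpha\xi)(\xi+\beta)}.
\]
You use it differently, though. The cross terms reduce to Fourier orthogonality. Folding the second diagonal term onto the first via $x\mapsto-x$ means the zeros $\alpha,-\beta^{-1}$ of $P(\xi)$ never act as poles, so only $-\beta$ matters, plus the origin when $m=n=0$. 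The pole at $-\beta$ contributes $(-\beta)^{m+n-1}\frac{(\alpha+\beta)(1-\beta^2)}{1+\alpha\beta}=-f_*(m)f_*(n)$ for $m,n\ge1$, which matches the normalization in \eqref{disc_efct} exactly.

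Your route makes every residue a one-line computation and shows directly why an eigenvalue exists precisely when $\mN>r$. The paper's route gains uniformity: it absorbs the boundary row $m=0$ into the generating function. You instead need the identity $P(\xi)-P(\xi^{-1})=r(\xi-\xi^{-1})$, which is just \eqref{f0} applied to $f_x$.

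Some bookkeeping needs fixing when you write it out.

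\begin{itemize}
\item \emph{Cross-term constant.} The prefactor is $\frac{1}{(2i)^2}\cdot\frac{2}{\pi}=-\frac{1}{2\pi}$, and $\int_0^\pi(\xi^{m-n}+\xi^{n-m})\,\mathrm{d}x=2\pi\delta_{m,n}$. So the cross terms give exactly $\delta_{m,n}$; the product you displayed equals $\frac12\delta_{m,n}$.
\item \emph{Pole at the origin.} $P(\xi)$ and $P(\xi^{-1})$ both carry a factor $\xi^{-1}$, so the ratio is regular at the origin with value $-\alpha/\beta=-\frac{1}{r-1}$. A residue at $0$ therefore appears only for $m=n=0$, not for $m+n=1$.
\item \emph{The $(0,0)$ entry.} Here $f_x(0)=\sqrt{\frac{r-1}{r}}\,\frac{P(\xi)-P(\xi^{-1})}{2i}$. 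The cross terms contribute $\frac{r-1}{r}$ rather than $1$, the origin contributes $\frac1r$, and the pole at $-\beta$ contributes $-f_*(0)^2$. All three together, not the two residues alone, give $1-f_*(0)^2$.
\item \emph{The case $\mN=r$.} No continuity or principal-value argument is needed. At $\beta=1$ the factor $\frac{1+\beta\xi}{\xi+\beta}$ equals $1$, so the ratio becomes $\frac{\xi-\alpha}{1-\alpha\xi}$, with no singularity on the circle. This is the removable singularity noted in the paper.
\item \emph{The case $\mN=r=2$.} Here $\alpha=1$, so $1/\xi_+=1$ lies on the circle, contrary to your claim that it always lies outside. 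The paper's assertion $\alpha<1$ has the same blind spot. The same cancellation covers this case: the ratio is identically $-1$.
\end{itemize}
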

The proof is based on generating functions, which allow to reduce the integral to a contour integral on the complex unit circle where all poles are simple. The details are given in Appendix~\ref{appB}.

\subsection{Heat kernel on the projected graph and proof of Theorem~\ref{K1}}

\begin{proposition} \label{prop:hk_proj}
The heat kernel of the $\pi_e$-projected Laplacian $\mathcal{L}_{\whl}$ on $\ell^2(\Z_{\ge0})$ is given by 
 $$
 h^{\whl}_t(m,n)
 = \int_0^\pi e^{-t\lambda_x} f_x(m)f_x(n)\frac{\mathrm{d}x}{H(x)}+e^{-t\lambda_*}
\begin{cases}
 f_*(m)f_*(n)&\text{if } \mN>r\\
 0&\text{if } \mN\le r
\end{cases} ,
 $$
 where $\lambda_*$, $\lambda_x$ are given in Proposition~\ref{spectrum_L^pr1}.
\end{proposition}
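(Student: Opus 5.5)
The plan is to read off the heat kernel from the spectral decomposition of $\mathcal{L}=\mathcal{L}_{\whl}$ that the completeness relation (Proposition~\ref{completeness1_prop}) supplies. Since $\mathcal{L}$ is bounded and self-adjoint on $\ell^2(\Z_{\ge0})$, and the heat operator is $h_t=e^{-t\mathcal{L}}$ (Definition~\ref{ht}), its matrix coefficients are $h_t(m,n)=\langle \delta_m, e^{-t\mathcal{L}}\delta_n\rangle$. The strategy is to apply $e^{-t\mathcal{L}}$ to the expansion of $\delta_n$ furnished by \eqref{completeness1}.

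Concretely, \eqref{completeness1} says that for each fixed $n$,
\[
\delta_n(m)=\int_0^\pi f_x(m)\,f_x(n)\,\frac{\mathrm{d}x}{H(x)}+\epsilon\, f_*(m)f_*(n),
\]
where $\epsilon=1$ if $\mN>r$ and $\epsilon=0$ otherwise. This is an expansion of $\delta_n$ in the (generalized) eigenfunctions of $\mathcal{L}$ with coefficients $f_x(n)/H(x)$ and $f_*(n)$. I will apply $\mathcal{L}$ termwise. Each $f_x$ satisfies $\mathcal{L}f_x=\lambda_x f_x$ pointwise on all of $\Z_{\ge0}$, including the boundary equations \eqref{l2}--\eqref{l3}, by Proposition~\ref{spectrum_L^pr1}. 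The operator $\mathcal{L}$ is a finite-range (tridiagonal) matrix, so for each $m$ the quantity $(\mathcal{L}g)(m)$ involves only finitely many values of $g$, and it commutes with the $x$-integral because $f_x(m)f_x(n)/H(x)$ is continuous and bounded on $[0,\pi]$; note that $H(x)>0$ there, since $r\sin x>0$ on $(0,\pi)$ and the endpoints have measure zero. By induction this gives, for every $k\ge0$,
\[
(\mathcal{L}^k)(m,n)=\int_0^\pi \lambda_x^k f_x(m)f_x(n)\frac{\mathrm{d}x}{H(x)}+\epsilon\,\lambda_*^k f_*(m)f_*(n).
\]

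Next I will sum the exponential series $\sum_k \frac{(-t)^k}{k!}\mathcal{L}^k$ entrywise. Both $\lambda_x$ (for $x\in[0,\pi]$) and $\lambda_*$ are bounded, and the integrand is uniformly bounded, so dominated convergence lets me interchange the sum over $k$ with the integral. This yields exactly the stated formula, with $e^{-t\lambda_x}$ and $e^{-t\lambda_*}$. The series defining $h_t$ converges in operator norm, so its matrix entries are the limits of the entries of the partial sums, and the identification is legitimate.

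The main obstacle is justifying that the termwise application of $\mathcal{L}$ is valid. This is the step where one might worry about the distributional nature of the $f_x$, which are not in $\ell^2$. Because $\mathcal{L}$ is a banded matrix and the completeness identity holds pointwise for every pair $(m,n)$, the argument stays at the level of finite sums of entries, so no functional-analytic subtlety arises. The only real care needed is checking that $1/H(x)$ is integrable, which holds since $H>0$ on the open interval and $f_x(m)f_x(n)$ vanishes at the endpoints to compensate wherever $H$ might degenerate.
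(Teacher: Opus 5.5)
Your proposal is correct and follows the paper's route. The paper gives no separate argument: it treats the formula as an immediate consequence of the eigenfunction relations in Proposition~\ref{spectrum_L^pr1} together with the completeness identity of Proposition~\ref{completeness1_prop}. Your induction on powers of the tridiagonal operator, followed by entrywise summation of the exponential series, is a clean way of making that step rigorous.

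One point should be stated more carefully: why $f_x(m)f_x(n)/H(x)$ is bounded. $H$ can vanish on $[0,\pi]$ only at the endpoints (for example at $x=\pi$ when $\mN=r$), and always to second order. Every $f_x(n)$ is a combination of $\sin kx$ with integer $k$, so $f_x(m)f_x(n)$ also vanishes to second order there. This, not the measure-zero remark, is what controls the integrand.
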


We now have all the preparations for proving Theorem~\ref{K1}.
\begin{proof}[Proof of Theorem~\ref{K1}]
Let $h^{\mathcal{X}}_t=e^{-t\mathcal{L}_{\mathcal{X}}}$ be the heat kernel of the Laplacian $\mathcal{L}_{\mathcal{X}}$. 
Then, formula~\eqref{eq6} implies that the function $k^{\mathcal{X}}_t(x)$ in~\eqref{heat_kernel_det} is given by
$$
k^{\mathcal{X}}_t(x)=K^{\mathcal{X}}_t(\pi_e(x)), \quad K^{\mathcal{X}}_t(n):=\frac{1}{\sqrt{|\pi_e^{-1}(n)|}} h^{\whl}_t(0,n),
$$
where $|\pi_e^{-1}(n)|$ is given in~\eqref{fibers}.
More specifically, using Proposition~\ref{prop:hk_proj}, we obtain
\begin{multline*}
 K^{\mathcal{X}}_t(n)  = \frac{1}{\sqrt{r(\mN-1)^n(r-1)^{n-1}}}\bigg( \chi_{(0,+\infty)}(\mN-r) e^{-t\frac{\mN}{\mN-1}} f(*,n)f(*,0) \ + \\
  \int_0^\pi e^{-t(1-\frac{\mN-2+2\cos x\sqrt{(r-1)(\mN-1)}}{r(\mN-1)})} f(x,n) f(x,0) \frac{\mathrm{d}x}{H(x)} \bigg) 
\end{multline*}
\begin{multline*}
= \frac{\max(\mN-r,0)}{\mN(1-\mN)^n} e^{-t\frac{\mN}{\mN-1}} + \frac{2}{\pi\sqrt{(\mN-1)^n(r-1)^{n-2}}} \times \\
 \int_0^\pi \frac{e^{-t(1-\frac{\mN-2+2\cos x\sqrt{(r-1)(\mN-1)}}{r(\mN-1)})} \Big( (r-1)\sin(n+1)x+(\mN-2)\sqrt{\frac{r-1}{\mN-1}}\sin nx-\sin(n-1)x \Big)\sin x}{\Big((\mN-2)\sqrt{\frac{r-1}{\mN-1}}+(r-2)\cos x\Big)^2 +(r\sin x)^2} \mathrm{d}x 
\end{multline*}
and
\begin{align*}
 K^{\mathcal{X}}_t(0) & =\chi_{(0,+\infty)}(\mN-r) e^{-t\frac{\mN}{\mN-1}} f(*,0)f(*,0) + \int_0^\pi e^{-t(1-\frac{\mN-2+2\cos(x)\sqrt{(r-1)(\mN-1)}}{r(\mN-1)})} f(x,0)f(x,0)
  \frac{\mathrm{d}x}{H(x)} \\
  &= \frac{\max(\mN-r,0)}{\mN} e^{-t\frac{\mN}{\mN-1}} + \frac{2r(r-1)}{\pi} \int_0^\pi  
\frac{e^{-t(1-\frac{\mN-2+2\cos x\sqrt{(r-1)(\mN-1)}}{r(\mN-1)})} \sin^2 x}{\Big((\mN-2)\sqrt{\frac{r-1}{\mN-1}}+(r-2)\cos x\Big)^2 +(r\sin x)^2} \mathrm{d}x.
\end{align*}
Using the substitutions $r=s+1$ and $\mN=q+1$, these formulas can be rewritten as the one in Theorem~\ref{K1}.
\end{proof}

\section{Free products of two finite groups of unequal order} \label{section2}

In this section we extend the covering approach developed in the previous section
to free products of two arbitrary finite groups of unequal orders.
Let
$$
\mathcal B=\mathcal B_{\oG,\oH}:=G*H, \quad |G|=\oG+1,\quad |H|=\oH+1,\quad \oH>\oG\ge1,
$$
and consider the Cayley graph $\mathcal{Y} =\mathcal Y_{\oG,\oH}= \mathrm{Cay}(\mathcal B, S)$ with respect to the generating set
$$
S=(G\setminus\{e\}) \sqcup (H\setminus\{e\}).
$$
We also consider a slight generalization of the weights that introduces no substantial complications in the spectral problem. Specifically, each edge of $G$ has weight 
$\mwo$, while each edge of $H$ has weight $\mwt$.
Since the normalized Laplacian is invariant under a rescaling of all edge weights, it depends only on the ratio $\tau:=\frac{\mwo}{\mwt}$. Nevertheless, we still keep both parameters $\mwo$ and $\mwt$, since the quotient construction and resulting formulas are invariant under the exchange of pairs $(\oG,\mwo)$ and $(\oH,\mwt)$.

\subsection{The Cayley graph and its projection}~\label{projection2}
The Cayley graph $\mathcal Y_{2,4}$ is illustrated in Figure~\ref{fig:cayley_graph2}.
As in the equal-order case, the graph is obtained by gluing complete graphs along vertices.
However, the local geometry is no longer homogeneous since complete graphs of size $|G|$ and $|H|$ alternate.

With a similar quotient construction as in Section~\ref{family1}, we obtain a strong and regular covering 
$$
\pi_e : \mathcal{Y} \to \wl,
$$
where $\wl$ is a weighted line.

\begin{figure}[h]
\center
\begin{tikzpicture}[scale=1]

\tikzset{
  treeedge/.style={line width=0.6pt},
  cliqueedge/.style={gray!90, line width=0.3pt}
}

\begin{scope}[scale=0.8, transform shape]

\coordinate (C0) at (0,0); 
\coordinate (t1) at ($(C0)+(0:1.3)$);
\coordinate (t3) at ($(C0)+(120:1.3)$);
\coordinate (t2) at ($(C0)+(240:1.3)$);

\coordinate (C1) at ($(t1)+(0:1.3)$);
\coordinate (a1) at ($(C1)+(180:1.3)$);
\coordinate (a2) at ($(C1)+(252:1.3)$);
\coordinate (a3) at ($(C1)+(324:1.3)$);
\coordinate (a4) at ($(C1)+(36:1.3)$); 
\coordinate (a5) at ($(C1)+(108:1.3)$); 

\coordinate (C2) at ($(t2)+(240:1.3)$);
\coordinate (b1) at ($(C2)+(60:1.3)$);
\coordinate (b2) at ($(C2)+(132:1.3)$);
\coordinate (b3) at ($(C2)+(204:1.3)$);
\coordinate (b4) at ($(C2)+(276:1.3)$);
\coordinate (b5) at ($(C2)+(348:1.3)$);

\coordinate (C3) at ($(t3)+(120:1.3)$);
\coordinate (c1) at ($(C3)+(-60:1.3)$);
\coordinate (c2) at ($(C3)+(12:1.3)$);
\coordinate (c3) at ($(C3)+(84:1.3)$);
\coordinate (c4) at ($(C3)+(156:1.3)$);
\coordinate (c5) at ($(C3)+(228:1.3)$);

\node at (1.25,-0.25) {\small$e$};

\draw[fill=black] (t1) circle [radius=2pt];
\draw[fill=black] (t2) circle [radius=2pt];
\draw[fill=black] (t3) circle [radius=2pt];

\draw[fill=black] (a1) circle [radius=1.8pt];
\draw[fill=black] (a2) circle [radius=1.8pt];
\draw[fill=black] (a3) circle [radius=1.8pt];
\draw[fill=black] (a4) circle [radius=1.8pt];
\draw[fill=black] (a5) circle [radius=1.8pt];

\draw[fill=black] (b1) circle [radius=1.8pt];
\draw[fill=black] (b2) circle [radius=1.8pt];
\draw[fill=black] (b3) circle [radius=1.8pt];
\draw[fill=black] (b4) circle [radius=1.8pt];
\draw[fill=black] (b5) circle [radius=1.8pt];

\draw[fill=black] (c1) circle [radius=1.8pt];
\draw[fill=black] (c2) circle [radius=1.8pt];
\draw[fill=black] (c3) circle [radius=1.8pt];
\draw[fill=black] (c4) circle [radius=1.8pt];
\draw[fill=black] (c5) circle [radius=1.8pt];

\draw(t1)--(t2)--(t3)--(t1); 

\draw(t1)--(a2); \draw(t1)--(a3); \draw(t1)--(a4); \draw(t1)--(a5);
\draw[cliqueedge](a2)--(a3); \draw[cliqueedge](a2)--(a4); \draw[cliqueedge](a2)--(a5); 
\draw[cliqueedge](a3)--(a4); \draw[cliqueedge](a3)--(a5); 
\draw[cliqueedge](a4)--(a5); 

\draw(t2)--(b2); \draw(t2)--(b3); \draw(t2)--(b4); \draw(t2)--(b5);
\draw[cliqueedge](b2)--(b3); \draw[cliqueedge](b2)--(b4); \draw[cliqueedge](b2)--(b5); 
\draw[cliqueedge](b3)--(b4); \draw[cliqueedge](b3)--(b5); 
\draw[cliqueedge](b4)--(b5); 

\draw(t3)--(c2); \draw(t3)--(c3); \draw(t3)--(c4); \draw(t3)--(c5);
\draw[cliqueedge](c2)--(c3); \draw[cliqueedge](c2)--(c4); \draw[cliqueedge](c2)--(c5); 
\draw[cliqueedge](c3)--(c4); \draw[cliqueedge](c3)--(c5); 
\draw[cliqueedge](c4)--(c5); 

\coordinate (a2t2) at ($(a2)+(222:1)$);
\coordinate (a2t3) at ($(a2)+(282:1)$);

\node at ($(a2t2)+(222:0.3)$){\tiny$...$};
\node at ($(a2t3)+(302:0.25)$){\tiny$...$};

\coordinate (a3t2) at ($(a3)+(294:1)$);
\coordinate (a3t3) at ($(a3)+(354:1)$);

\node at ($(a3t2)+(270:0.25)$){\tiny$...$};
\node at ($(a3t3)+(14:0.25)$){\tiny$...$};

\coordinate (a4t2) at ($(a4)+(6:1)$);
\coordinate (a4t3) at ($(a4)+(66:1)$);

\node at ($(a4t2)+(-20:0.25)$){\tiny$...$};
\node at ($(a4t3)+(90:0.25)$){\tiny$...$};

\coordinate (a5t2) at ($(a5)+(78:1)$);
\coordinate (a5t3) at ($(a5)+(138:1)$);

\node at ($(a5t2)+(78:0.25)$){\tiny$...$};
\node at ($(a5t3)+(158:0.3)$){\tiny$...$};

\draw[fill=black] (a2t2) circle [radius=1.3pt];
\draw[fill=black] (a2t3) circle [radius=1.3pt];
\draw(a2)--(a2t2)--(a2t3)--(a2); 

\draw[fill=black] (a3t2) circle [radius=1.3pt];
\draw[fill=black] (a3t3) circle [radius=1.3pt];
\draw(a3)--(a3t2)--(a3t3)--(a3); 

\draw[fill=black] (a4t2) circle [radius=1.3pt];
\draw[fill=black] (a4t3) circle [radius=1.3pt];
\draw(a4)--(a4t2)--(a4t3)--(a4); 

\draw[fill=black] (a5t2) circle [radius=1.3pt];
\draw[fill=black] (a5t3) circle [radius=1.3pt];
\draw(a5)--(a5t2)--(a5t3)--(a5);

\coordinate (b2t2) at ($(b2)+(102:1)$);
\coordinate (b2t3) at ($(b2)+(162:1)$);

\node at ($(b2t2)+(92:0.3)$){\tiny$...$};
\node at ($(b2t3)+(182:0.25)$){\tiny$...$};

\coordinate (b3t2) at ($(b3)+(174:1)$);
\coordinate (b3t3) at ($(b3)+(234:1)$);

\node at ($(b3t2)+(154:0.25)$){\tiny$...$};
\node at ($(b3t3)+(264:0.25)$){\tiny$...$};

\coordinate (b4t2) at ($(b4)+(246:1)$);
\coordinate (b4t3) at ($(b4)+(306:1)$);

\node at ($(b4t2)+(246:0.25)$){\tiny$...$};
\node at ($(b4t3)+(326:0.25)$){\tiny$...$};

\coordinate (b5t2) at ($(b5)+(318:1)$);
\coordinate (b5t3) at ($(b5)+(18:1)$);

\node at ($(b5t2)+(278:0.25)$){\tiny$...$};

\draw[fill=black] (b2t2) circle [radius=1.3pt];
\draw[fill=black] (b2t3) circle [radius=1.3pt];
\draw(b2)--(b2t2)--(b2t3)--(b2); 

\draw[fill=black] (b3t2) circle [radius=1.3pt];
\draw[fill=black] (b3t3) circle [radius=1.3pt];
\draw(b3)--(b3t2)--(b3t3)--(b3); 

\draw[fill=black] (b4t2) circle [radius=1.3pt];
\draw[fill=black] (b4t3) circle [radius=1.3pt];
\draw(b4)--(b4t2)--(b4t3)--(b4); 

\draw[fill=black] (b5t2) circle [radius=1.3pt];
\draw[fill=black] (b5t3) circle [radius=1.3pt];
\draw(b5)--(b5t2)--(b5t3)--(b5); 

\coordinate (c2t2) at ($(c2)+(-18:1)$);
\coordinate (c2t3) at ($(c2)+(42:1)$);

\node at ($(c2t3)+(82:0.25)$){\tiny$...$};

\coordinate (c3t2) at ($(c3)+(54:1)$);
\coordinate (c3t3) at ($(c3)+(114:1)$);

\node at ($(c3t2)+(54:0.25)$){\tiny$...$};
\node at ($(c3t3)+(134:0.25)$){\tiny$...$};

\coordinate (c4t2) at ($(c4)+(126:1)$);
\coordinate (c4t3) at ($(c4)+(186:1)$);

\node at ($(c4t2)+(92:0.25)$){\tiny$...$};
\node at ($(c4t3)+(226:0.25)$){\tiny$...$};

\coordinate (c5t2) at ($(c5)+(198:1)$);
\coordinate (c5t3) at ($(c5)+(258:1)$);

\node at ($(c5t2)+(178:0.25)$){\tiny$...$};

\draw[fill=black] (c2t2) circle [radius=1.3pt];
\draw[fill=black] (c2t3) circle [radius=1.3pt];
\draw(c2)--(c2t2)--(c2t3)--(c2); 

\draw[fill=black] (c3t2) circle [radius=1.3pt];
\draw[fill=black] (c3t3) circle [radius=1.3pt];
\draw(c3)--(c3t2)--(c3t3)--(c3); 

\draw[fill=black] (c4t2) circle [radius=1.3pt];
\draw[fill=black] (c4t3) circle [radius=1.3pt];
\draw(c4)--(c4t2)--(c4t3)--(c4); 

\draw[fill=black] (c5t2) circle [radius=1.3pt];
\draw[fill=black] (c5t3) circle [radius=1.3pt];
\draw(c5)--(c5t2)--(c5t3)--(c5); 

\end{scope}

\draw[-{Latex}] ($(t1 |- 0,-3.5)$) -- ($(t1 |- 0,-4.5)$);
\node at (0.8,-4) {\small $\pi_e$};

\node at ($(t1 |- 0,3.5)$) {$\mathcal{Y}$};



\node at ($(t1 |- 0,-7)$) {$\wl$};

\coordinate (p0) at  ($ (t1) + (0,-5.5) $);
\coordinate (p1) at ($ (p0) + (1.5,0) $); 
\coordinate (p2) at ($ (p1) + (1.5,0) $); 
\coordinate (p3) at ($ (p2) + (1.5,0) $); 
\coordinate (etc) at ($ (p3) + (1.2,0) $); 

\coordinate (-p1) at  ($(t2 |- p0)$);
\coordinate (-p2) at ($ (-p1) + (-1.5,0) $); 
\coordinate (-p3) at ($ (-p2) + (-1.5,0) $); 
\coordinate (-etc) at ($ (-p3) + (-1.2,0) $); 

\node at ($ (p0) + (0,-0.4) $) {\small $0$};
\node at ($ (p1) + (0,-0.4) $) {\small $1$};
\node at ($ (p2) + (0,-0.4) $) {\small $2$};
\node at ($ (p3) + (0,-0.4) $) {\small $3$};

\node at ($ (-p1) + (0,-0.4) $) {\small $-1$};
\node at ($ (-p2) + (0,-0.4) $) {\small $-2$};
\node at ($ (-p3) + (0,-0.4) $) {\small $-3$};

\draw[fill=black] (p0) circle [radius=2pt];
\draw[fill=black] (p1) circle [radius=2pt];
\draw[fill=black] (p2) circle [radius=2pt];
\draw[fill=black] (p3) circle [radius=2pt];
\draw[fill=black] (-p1) circle [radius=2pt];
\draw[fill=black] (-p2) circle [radius=2pt];
\draw[fill=black] (-p3) circle [radius=2pt];

\draw(p0)--node[above]{\tiny{\color{black} $\oH\mwt$}}(p1); 
\draw(p1)--node[above]{\tiny{\color{black} $\oG \oH\mwo$}}(p2); 
\draw(p2)--node[above]{\tiny{\color{black} $\oG \oH^2\mwt$}}(p3); 
\draw(p3)--(etc); 

\draw(p0)--node[above]{\tiny{\color{black} $\oG\mwo$}}(-p1); 
\draw(-p1)--node[above]{\tiny{\color{black} $\oG \oH\mwt$}}(-p2); 
\draw(-p2)--node[above]{\tiny{\color{black} $\oG^2 \oH\mwo$}}(-p3); 
\draw(-p3)--(-etc);

\makeatletter
\tikzset{my loop/.style =  {to path={
  \pgfextra{\let\tikztotarget=\tikztostart}
  [looseness=6,min distance=10mm]
  \tikz@to@curve@path}
  }}  
\makeatletter 

\path (p1) edge[my loop] node[above] {\tiny{\color{black} $2\binom{\oH}{2}\mwt$}} (p1);
\path (p2) edge[my loop] node[above] {\tiny{\color{black} $2N\binom{\oG}{2}\mwo$}} (p2);
\path (p3) edge[my loop] node[above] {\tiny{\color{black} $2MN\binom{\oH}{2}\mwt$}} (p3);

\path (-p1) edge[my loop] node[above] {\tiny{\color{black} $2\binom{\oG}{2}\mwo$}} (-p1);
\path (-p2) edge[my loop] node[above] {\tiny{\color{black} $2M\binom{\oH}{2}\mwt$}} (-p2);
\path (-p3) edge[my loop] node[above] {\tiny{\color{black} $2MN\binom{\oG}{2}\mwo$}} (-p3);

\node at (7.2,-5.5) {$\dots$};
\node at (-5.1,-5.5) {$\dots$};

\end{tikzpicture}
\caption{$\mathcal{Y}$ covers the weighted line $\wl$ strongly and regularly. The distinguished vertex is $0$.} \label{fig:covering2}
\end{figure}

\begin{figure}[h]
\center
\begin{tikzpicture}[scale=0.8]

\tikzset{
  treeedge/.style={line width=0.6pt},
  cliqueedge/.style={gray!90, line width=0.3pt}
}

\begin{scope}[scale=0.8, transform shape]

\coordinate (C0) at (0,0); 
\coordinate (t1) at ($(C0)+(0:1.3)$);
\coordinate (t3) at ($(C0)+(120:1.3)$);
\coordinate (t2) at ($(C0)+(240:1.3)$);

\coordinate (C1) at ($(t1)+(0:1.3)$);
\coordinate (a1) at ($(C1)+(180:1.3)$);
\coordinate (a2) at ($(C1)+(252:1.3)$);
\coordinate (a3) at ($(C1)+(324:1.3)$);
\coordinate (a4) at ($(C1)+(36:1.3)$); 
\coordinate (a5) at ($(C1)+(108:1.3)$); 

\coordinate (C2) at ($(t2)+(240:1.3)$);
\coordinate (b1) at ($(C2)+(60:1.3)$);
\coordinate (b2) at ($(C2)+(132:1.3)$);
\coordinate (b3) at ($(C2)+(204:1.3)$);
\coordinate (b4) at ($(C2)+(276:1.3)$);
\coordinate (b5) at ($(C2)+(348:1.3)$);

\coordinate (C3) at ($(t3)+(120:1.3)$);
\coordinate (c1) at ($(C3)+(-60:1.3)$);
\coordinate (c2) at ($(C3)+(12:1.3)$);
\coordinate (c3) at ($(C3)+(84:1.3)$);
\coordinate (c4) at ($(C3)+(156:1.3)$);
\coordinate (c5) at ($(C3)+(228:1.3)$);

\node at (1.25,-0.25) {\small$e$};

\draw[fill=black] (t1) circle [radius=2pt];
\draw[fill=black] (t2) circle [radius=2pt];
\draw[fill=black] (t3) circle [radius=2pt];

\draw[fill=black] (a1) circle [radius=1.8pt];
\draw[fill=black] (a2) circle [radius=1.8pt];
\draw[fill=black] (a3) circle [radius=1.8pt];
\draw[fill=black] (a4) circle [radius=1.8pt];
\draw[fill=black] (a5) circle [radius=1.8pt];

\draw[fill=black] (b1) circle [radius=1.8pt];
\draw[fill=black] (b2) circle [radius=1.8pt];
\draw[fill=black] (b3) circle [radius=1.8pt];
\draw[fill=black] (b4) circle [radius=1.8pt];
\draw[fill=black] (b5) circle [radius=1.8pt];

\draw[fill=black] (c1) circle [radius=1.8pt];
\draw[fill=black] (c2) circle [radius=1.8pt];
\draw[fill=black] (c3) circle [radius=1.8pt];
\draw[fill=black] (c4) circle [radius=1.8pt];
\draw[fill=black] (c5) circle [radius=1.8pt];

\draw(t1)--(t2)--(t3)--(t1); 

\draw(t1)--(a2); \draw(t1)--(a3); \draw(t1)--(a4); \draw(t1)--(a5);
\draw[cliqueedge](a2)--(a3); \draw[cliqueedge](a2)--(a4); \draw[cliqueedge](a2)--(a5); 
\draw[cliqueedge](a3)--(a4); \draw[cliqueedge](a3)--(a5); 
\draw[cliqueedge](a4)--(a5); 

\draw(t2)--(b2); \draw(t2)--(b3); \draw(t2)--(b4); \draw(t2)--(b5);
\draw[cliqueedge](b2)--(b3); \draw[cliqueedge](b2)--(b4); \draw[cliqueedge](b2)--(b5); 
\draw[cliqueedge](b3)--(b4); \draw[cliqueedge](b3)--(b5); 
\draw[cliqueedge](b4)--(b5); 

\draw(t3)--(c2); \draw(t3)--(c3); \draw(t3)--(c4); \draw(t3)--(c5);
\draw[cliqueedge](c2)--(c3); \draw[cliqueedge](c2)--(c4); \draw[cliqueedge](c2)--(c5); 
\draw[cliqueedge](c3)--(c4); \draw[cliqueedge](c3)--(c5); 
\draw[cliqueedge](c4)--(c5); 

\coordinate (a2t2) at ($(a2)+(222:1)$);
\coordinate (a2t3) at ($(a2)+(282:1)$);

\node at ($(a2t2)+(222:0.3)$){\tiny$...$};
\node at ($(a2t3)+(302:0.25)$){\tiny$...$};

\coordinate (a3t2) at ($(a3)+(294:1)$);
\coordinate (a3t3) at ($(a3)+(354:1)$);

\node at ($(a3t2)+(270:0.25)$){\tiny$...$};
\node at ($(a3t3)+(14:0.25)$){\tiny$...$};

\coordinate (a4t2) at ($(a4)+(6:1)$);
\coordinate (a4t3) at ($(a4)+(66:1)$);

\node at ($(a4t2)+(-20:0.25)$){\tiny$...$};
\node at ($(a4t3)+(90:0.25)$){\tiny$...$};

\coordinate (a5t2) at ($(a5)+(78:1)$);
\coordinate (a5t3) at ($(a5)+(138:1)$);

\node at ($(a5t2)+(78:0.25)$){\tiny$...$};
\node at ($(a5t3)+(158:0.3)$){\tiny$...$};

\draw[fill=black] (a2t2) circle [radius=1.3pt];
\draw[fill=black] (a2t3) circle [radius=1.3pt];
\draw(a2)--(a2t2)--(a2t3)--(a2); 

\draw[fill=black] (a3t2) circle [radius=1.3pt];
\draw[fill=black] (a3t3) circle [radius=1.3pt];
\draw(a3)--(a3t2)--(a3t3)--(a3); 

\draw[fill=black] (a4t2) circle [radius=1.3pt];
\draw[fill=black] (a4t3) circle [radius=1.3pt];
\draw(a4)--(a4t2)--(a4t3)--(a4); 

\draw[fill=black] (a5t2) circle [radius=1.3pt];
\draw[fill=black] (a5t3) circle [radius=1.3pt];
\draw(a5)--(a5t2)--(a5t3)--(a5);

\coordinate (b2t2) at ($(b2)+(102:1)$);
\coordinate (b2t3) at ($(b2)+(162:1)$);

\node at ($(b2t2)+(92:0.3)$){\tiny$...$};
\node at ($(b2t3)+(182:0.25)$){\tiny$...$};

\coordinate (b3t2) at ($(b3)+(174:1)$);
\coordinate (b3t3) at ($(b3)+(234:1)$);

\node at ($(b3t2)+(154:0.25)$){\tiny$...$};
\node at ($(b3t3)+(264:0.25)$){\tiny$...$};

\coordinate (b4t2) at ($(b4)+(246:1)$);
\coordinate (b4t3) at ($(b4)+(306:1)$);

\node at ($(b4t2)+(246:0.25)$){\tiny$...$};
\node at ($(b4t3)+(326:0.25)$){\tiny$...$};

\coordinate (b5t2) at ($(b5)+(318:1)$);
\coordinate (b5t3) at ($(b5)+(18:1)$);

\node at ($(b5t2)+(278:0.25)$){\tiny$...$};

\draw[fill=black] (b2t2) circle [radius=1.3pt];
\draw[fill=black] (b2t3) circle [radius=1.3pt];
\draw(b2)--(b2t2)--(b2t3)--(b2); 

\draw[fill=black] (b3t2) circle [radius=1.3pt];
\draw[fill=black] (b3t3) circle [radius=1.3pt];
\draw(b3)--(b3t2)--(b3t3)--(b3); 

\draw[fill=black] (b4t2) circle [radius=1.3pt];
\draw[fill=black] (b4t3) circle [radius=1.3pt];
\draw(b4)--(b4t2)--(b4t3)--(b4); 

\draw[fill=black] (b5t2) circle [radius=1.3pt];
\draw[fill=black] (b5t3) circle [radius=1.3pt];
\draw(b5)--(b5t2)--(b5t3)--(b5); 

\coordinate (c2t2) at ($(c2)+(-18:1)$);
\coordinate (c2t3) at ($(c2)+(42:1)$);

\node at ($(c2t3)+(82:0.25)$){\tiny$...$};

\coordinate (c3t2) at ($(c3)+(54:1)$);
\coordinate (c3t3) at ($(c3)+(114:1)$);

\node at ($(c3t2)+(54:0.25)$){\tiny$...$};
\node at ($(c3t3)+(134:0.25)$){\tiny$...$};

\coordinate (c4t2) at ($(c4)+(126:1)$);
\coordinate (c4t3) at ($(c4)+(186:1)$);

\node at ($(c4t2)+(92:0.25)$){\tiny$...$};
\node at ($(c4t3)+(226:0.25)$){\tiny$...$};

\coordinate (c5t2) at ($(c5)+(198:1)$);
\coordinate (c5t3) at ($(c5)+(258:1)$);

\node at ($(c5t2)+(178:0.25)$){\tiny$...$};

\draw[fill=black] (c2t2) circle [radius=1.3pt];
\draw[fill=black] (c2t3) circle [radius=1.3pt];
\draw(c2)--(c2t2)--(c2t3)--(c2); 

\draw[fill=black] (c3t2) circle [radius=1.3pt];
\draw[fill=black] (c3t3) circle [radius=1.3pt];
\draw(c3)--(c3t2)--(c3t3)--(c3); 

\draw[fill=black] (c4t2) circle [radius=1.3pt];
\draw[fill=black] (c4t3) circle [radius=1.3pt];
\draw(c4)--(c4t2)--(c4t3)--(c4); 

\draw[fill=black] (c5t2) circle [radius=1.3pt];
\draw[fill=black] (c5t3) circle [radius=1.3pt];
\draw(c5)--(c5t2)--(c5t3)--(c5); 

\end{scope}

\draw[-{Latex}] (0.25,-4) -- (0.25,-5);
\node at (0.8,-4.5) {\small $/ S_{\oG}$};

\node at ($(t1 |- 0,3.5)$) {$\mathcal Y$};


\begin{scope}[scale=0.8, transform shape, shift={(0,-9)}]

\coordinate (C0) at (0,0); 
\coordinate (t1) at ($(C0)+(0:1.3)$);
\coordinate (t2) at ($(C0)+(240:1.3)$);
\coordinate (t2n) at ($(t2 |- 0,0)$);

\coordinate (C1) at ($(t1)+(0:1.3)$);
\coordinate (a1) at ($(C1)+(180:1.3)$);
\coordinate (a2) at ($(C1)+(252:1.3)$);
\coordinate (a3) at ($(C1)+(324:1.3)$);
\coordinate (a4) at ($(C1)+(36:1.3)$); 
\coordinate (a5) at ($(C1)+(108:1.3)$); 

\coordinate (C2) at ($(t2n)+(180:1.3)$);
\coordinate (b1) at ($(C2)+(0:1.3)$);
\coordinate (b2) at ($(C2)+(72:1.3)$);
\coordinate (b3) at ($(C2)+(144:1.3)$);
\coordinate (b4) at ($(C2)+(216:1.3)$);
\coordinate (b5) at ($(C2)+(288:1.3)$);

\node at (1.25,-0.25) {\small$v_0$};
\node at (-0.3,-0.25) {\small$v_{-1}$};

\draw[fill=black] (t1) circle [radius=2pt];
\draw[fill=black] (t2n) circle [radius=2pt];

\draw[fill=black] (a1) circle [radius=1.8pt];
\draw[fill=black] (a2) circle [radius=1.8pt];
\draw[fill=black] (a3) circle [radius=1.8pt];
\draw[fill=black] (a4) circle [radius=1.8pt];
\draw[fill=black] (a5) circle [radius=1.8pt];

\draw[fill=black] (b1) circle [radius=1.8pt];
\draw[fill=black] (b2) circle [radius=1.8pt];
\draw[fill=black] (b3) circle [radius=1.8pt];
\draw[fill=black] (b4) circle [radius=1.8pt];
\draw[fill=black] (b5) circle [radius=1.8pt];

\draw(t2n)--node[above]{\tiny{\color{black} $M\mwo$}}(t1); 

\draw(t1)--(a2); \draw(t1)--(a3); \draw(t1)--(a4); \draw(t1)--(a5);
\draw[cliqueedge](a2)--(a3); \draw[cliqueedge](a2)--(a4); \draw[cliqueedge](a2)--(a5); 
\draw[cliqueedge](a3)--(a4); \draw[cliqueedge](a3)--(a5); 
\draw[cliqueedge](a4)--(a5); 

\draw(t2n)--(b2); \draw(t2n)--(b3); \draw(t2n)--(b4); \draw(t2n)--(b5);
\draw[cliqueedge](b2)--(b3); \draw[cliqueedge](b2)--(b4); \draw[cliqueedge](b2)--(b5); 
\draw[cliqueedge](b3)--(b4); \draw[cliqueedge](b3)--(b5); 
\draw[cliqueedge](b4)--(b5);

\coordinate (a2t2) at ($(a2)+(222:1)$);
\coordinate (a2t3) at ($(a2)+(282:1)$);

\node at ($(a2t2)+(222:0.3)$){\tiny$...$};
\node at ($(a2t3)+(302:0.25)$){\tiny$...$};

\coordinate (a3t2) at ($(a3)+(294:1)$);
\coordinate (a3t3) at ($(a3)+(354:1)$);

\node at ($(a3t2)+(270:0.25)$){\tiny$...$};
\node at ($(a3t3)+(14:0.25)$){\tiny$...$};

\coordinate (a4t2) at ($(a4)+(6:1)$);
\coordinate (a4t3) at ($(a4)+(66:1)$);

\node at ($(a4t2)+(-20:0.25)$){\tiny$...$};
\node at ($(a4t3)+(90:0.25)$){\tiny$...$};

\coordinate (a5t2) at ($(a5)+(78:1)$);
\coordinate (a5t3) at ($(a5)+(138:1)$);

\node at ($(a5t2)+(78:0.25)$){\tiny$...$};
\node at ($(a5t3)+(158:0.3)$){\tiny$...$};

\draw[fill=black] (a2t2) circle [radius=1.3pt];
\draw[fill=black] (a2t3) circle [radius=1.3pt];
\draw(a2)--(a2t2)--(a2t3)--(a2); 

\draw[fill=black] (a3t2) circle [radius=1.3pt];
\draw[fill=black] (a3t3) circle [radius=1.3pt];
\draw(a3)--(a3t2)--(a3t3)--(a3); 

\draw[fill=black] (a4t2) circle [radius=1.3pt];
\draw[fill=black] (a4t3) circle [radius=1.3pt];
\draw(a4)--(a4t2)--(a4t3)--(a4); 

\draw[fill=black] (a5t2) circle [radius=1.3pt];
\draw[fill=black] (a5t3) circle [radius=1.3pt];
\draw(a5)--(a5t2)--(a5t3)--(a5);

\coordinate (b2t2) at ($(b2)+(42:1)$);
\coordinate (b2t3) at ($(b2)+(102:1)$);

\node at ($(b2t2)+(32:0.3)$){\tiny$...$};
\node at ($(b2t3)+(135:0.25)$){\tiny$...$};

\coordinate (b3t2) at ($(b3)+(114:1)$);
\coordinate (b3t3) at ($(b3)+(174:1)$);

\node at ($(b3t2)+(90:0.25)$){\tiny$...$};
\node at ($(b3t3)+(210:0.25)$){\tiny$...$};

\coordinate (b4t2) at ($(b4)+(186:1)$);
\coordinate (b4t3) at ($(b4)+(246:1)$);

\node at ($(b4t2)+(160:0.25)$){\tiny$...$};
\node at ($(b4t3)+(280:0.25)$){\tiny$...$};

\coordinate (b5t2) at ($(b5)+(258:1)$);
\coordinate (b5t3) at ($(b5)+(318:1)$);

\node at ($(b5t2)+(245:0.25)$){\tiny$...$};
\node at ($(b5t3)+(320:0.25)$){\tiny$...$};

\draw[fill=black] (b2t2) circle [radius=1.3pt];
\draw[fill=black] (b2t3) circle [radius=1.3pt];
\draw(b2)--(b2t2)--(b2t3)--(b2); 

\draw[fill=black] (b3t2) circle [radius=1.3pt];
\draw[fill=black] (b3t3) circle [radius=1.3pt];
\draw(b3)--(b3t2)--(b3t3)--(b3); 

\draw[fill=black] (b4t2) circle [radius=1.3pt];
\draw[fill=black] (b4t3) circle [radius=1.3pt];
\draw(b4)--(b4t2)--(b4t3)--(b4); 

\draw[fill=black] (b5t2) circle [radius=1.3pt];
\draw[fill=black] (b5t3) circle [radius=1.3pt];
\draw(b5)--(b5t2)--(b5t3)--(b5);

\makeatletter
\tikzset{my loop/.style =  {to path={
  \pgfextra{\let\tikztotarget=\tikztostart}
  [looseness=6,min distance=10mm]
  \tikz@to@curve@path}
  }}  
\makeatletter 

\path (t2n) edge[my loop] node[above] {\tiny{\color{black} $2\binom{M}{2}\mwo$}} (t2n);
\end{scope}

\end{tikzpicture}
\caption{Covering $\mathcal Y \to \mathcal Y/S_M$.} \label{fig:covering2_step1}
\end{figure}


At the identity vertex $e$, the graph splits into two branches starting with the complete graphs of the groups $G$ (to the left) and $H$ (to the right), respectively. 
The symmetric group $S_{\oG}$ acts by permuting the $\oG=|G|-1$ vertices adjacent to $e$ in the copy of $G$, together with their branches, while fixing $e$ and everything to the right. 
Quotienting by this action identifies these $\oG$ vertices into a single vertex $v_{-1}$ and produces a loop at $v_{-1}$ of weight $2\binom{\oG}{2}\mwo$, as shown in Figure~\ref{fig:covering2_step1}.

After this first quotient, the same configuration appears on both sides of the edge between $v_{-1}$ and $v_0$.
Starting from $v_{-1}$ on the left (resp. from $v_0$ on the right) there are $N=|H|-1$ equivalent vertices corresponding to the group $H$.
The group $S_{\oH}$ acts by permuting these vertices together with their branches, while fixing $v_{-1}$ (resp. $v_0$).
Quotienting by this action identifies these $\oH$ vertices and produces a loop of weight $2\oG \binom{\oH}{2}\mwt$ (resp. $2\binom{\oH}{2}\mwt$), see Figure~\ref{fig:covering2}.

Iterating this procedure infinitely by alternating with the actions of $S_{\oG}$ and $S_{\oH}$ yields the weighted line $\wl$.
Note that the weights on the left and right sides are symmetric with respect to the vertex $0$, with the roles of pairs $(\oG,\mwo)$ and $(\oH,\mwt)$ exchanged, see Figure~\ref{fig:covering2}. This construction gives the following weight function.

\begin{proposition} \label{weight_fct2}
The weight function $w=w_{\wl}$ of $\wl$ is given on the non-negative side by
\begin{align*}
& w(0,0)=0, \\
& w(2n+1,2n+1)=2\binom{\oH}{2}\oG^n\oH^n\mwt, \ \ \ n\ge 0,\\
& w(2n,2n+1)=\oG^n\oH^{n+1}\mwt, \qquad \qquad \ \; n\ge 0,\\
& w(2n,2n)=2\binom{\oG}{2}\oG^{n-1}\oH^n\mwo, \qquad \quad \; n\ge 1,\\
& w(2n-1,2n)=\oG^n\oH^n\mwo, \qquad \qquad \quad \ \; n\ge 1.
\end{align*}
The weights on the negative side are obtained from those on the positive side by exchanging the roles of pairs $(\oG,\mwo)$ and $(\oH,\mwt)$.
All other weights are $0$.
\end{proposition}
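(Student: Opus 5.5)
The plan is to compute the weights directly from the quotient formula \eqref{qgraph}, using the fact that the iterated quotient identifies $\pi_e^{-1}(n)$ with the vertex $n$. By Proposition~\ref{Prop2}, applied at each step (or directly to the group generated by all the symmetries, which fixes $e$ and preserves $\pi_e$), the weight between $m$ and $n$ in $\wl$ is
\[
w(m,n)=\sum_{x\in\pi_e^{-1}(m),\,y\in\pi_e^{-1}(n)}\tilde w(x,y),
\]
and for $m=n$ this counts ordered pairs, which accounts for the factor $2$ in front of $\binom{\cdot}{2}$. Since the formula is additive, the weights of $\wl$ do not depend on the order in which the quotients are taken. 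It then suffices to identify the fibers and the edges between and within them.

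The first step is to describe the fibers. For $n\ge1$, $\pi_e^{-1}(n)$ consists of the reduced words of length $n$ whose first letter lies in $H$. The letters alternate, so the word has the shape $h_1g_1h_2g_2\cdots$. Counting gives
\[
|\pi_e^{-1}(2n+1)|=\oG^n\oH^{n+1},\qquad |\pi_e^{-1}(2n)|=\oG^n\oH^n .
\]
The second step is to classify the edges meeting a vertex $x$ of length $k\ge1$ on the positive side. Right multiplication by a nonidentity element of the factor containing the last letter of $x$ either removes that letter, which gives one edge to length $k-1$, or replaces it by another letter of the same factor. That replacement gives edges inside the fiber: $\oH-1$ of them (weight $\mwt$) if $k$ is odd, and $\oG-1$ of them (weight $\mwo$) if $k$ is even. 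Multiplying by an element of the other factor gives edges to length $k+1$.

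The third step is to sum over each fiber. Each vertex of length $2n+1$ has exactly one down-edge, which is an $H$-edge of weight $\mwt$, so
\[
w(2n,2n+1)=\mwt\,|\pi_e^{-1}(2n+1)|=\oG^n\oH^{n+1}\mwt .
\]
The $n=0$ case is the $\oH$ edges from $e$. Similarly,
\[
w(2n-1,2n)=\mwo\,|\pi_e^{-1}(2n)|=\oG^n\oH^n\mwo .
\]
For the loops, summing over ordered pairs gives
\[
w(2n+1,2n+1)=(\oH-1)\mwt\,\oG^n\oH^{n+1}=2\binom{\oH}{2}\oG^n\oH^n\mwt,
\]
and
\[
w(2n,2n)=(\oG-1)\mwo\,\oG^n\oH^n=2\binom{\oG}{2}\oG^{n-1}\oH^n\mwo .
\]
Finally, $w(0,0)=0$ because $e$ has no neighbours of length $0$. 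There are no edges between fibers whose indices differ by at least $2$, and none between the positive and negative sides apart from those at $0$, since each right multiplication by a generator changes the length by at most one.

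The negative side follows from the same computation with the roles of $(\oG,\mwo)$ and $(\oH,\mwt)$ exchanged, because words starting in $G$ are treated symmetrically. The only delicate point is to make sure the quotient vertices in the figures really coincide with the fibers of $\pi_e$. This holds because each symmetric group used fixes the word length and the first factor, and the iterated action is transitive on each fiber; the transitivity follows inductively from the branch-permuting description.
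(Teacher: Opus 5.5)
Your proposal is correct, but it reaches the weights by a different route from the paper. The paper builds $\wl$ as an explicit sequence of quotients by $S_{\oG}$ and $S_{\oH}$. It reads the weights off that recursion, tracking how each step multiplies the weights of the collapsed branches and creates loops, in the spirit of the recurrences \eqref{w1_rec}--\eqref{w2_rec} for the first family. For this family it says only that the construction gives the stated weights.

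You instead collapse everything at once. Iterated quotient weights compose additively, so $w(m,n)$ is the sum of $\tilde w$ over $\pi_e^{-1}(m)\times\pi_e^{-1}(n)$. You then evaluate this sum directly in reduced-word normal form. A vertex of length $k\ge1$ has exactly one neighbour of length $k-1$, $\oH-1$ or $\oG-1$ neighbours in its own fibre, and the remaining neighbours one level up. Its first letter is preserved, so the positive and negative sides meet only at $0$. Counting ordered pairs gives the factors $2\binom{\cdot}{2}$.

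Each route buys something different:
\begin{itemize}
\item Your argument is self-contained and does not rely on the figures. As a by-product it verifies the fibre sizes \eqref{fibers2} and the signed word-length description \eqref{CY_proj2} of $\pi_e$, which the paper states without separate proof.
\item The paper's step-by-step view shows each intermediate graph as a quotient by a finite group of automorphisms. That makes the covering property (via Proposition~\ref{Prop2}) and its strong regularity immediate, and these are what the later heat-kernel transfer actually uses.
\end{itemize}
In your version, the identification of $\wl$ with the paper's iterated quotient still rests on the claim that the iterated orbits are exactly the fibres. You only sketch this, as does the paper.

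One small correction: the fibre-sum formula for the weights is the definition \eqref{qgraph} of the quotient weight. Proposition~\ref{Prop2} only asserts that the projection is a covering.
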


The covering map $\pi_e$ can be viewed as a signed version of the word-length map. Indeed, for $x\in \mathcal{B}$,
\begin{equation}\label{CY_proj2}
 \pi_e(x)=
\begin{cases} 
0 & \text{if $x=e$} \\
 |x| & \text{if the reduced form of $x$ starts with a letter in $H$} \\
 -|x| & \text{if the reduced form of $x$ starts with a letter in $G$}.
\end{cases}
\end{equation}

As in Section~\ref{family1}, Proposition~\ref{Prop2} and vertex transitivity imply that $\mathcal{Y}$ covers $\wl$ strongly and regularly, with distinguished vertex $0$, see Fig.~\ref{fig:covering2}.

\subsection{The projected Laplacian}

Let $\pi_e\colon \mathcal{B} \to \Z$ be the covering defined above.
By definition of the covering, see Section~\ref{projection2}, we have
\begin{equation}\label{fibers2}
 |\pi_e^{-1}(m)|=
\begin{cases}
 \oG^n \oH^n & \text{if } m=2n \\
 \oG^n \oH^{n+1} & \text{if } m=2n+1,
\end{cases}
\quad m\ge0,
\end{equation}
and the fibers on the negative side are obtained by exchanging $M$ and $N$.

The weight function $w$ is given in Proposition~\ref{weight_fct2}. 
Denote
\begin{equation}
 \kappa:= M\mwo+N\mwt.
\end{equation}
Since every vertex of $\mathcal{Y}$ has weighted degree $\kappa$, the degree of a vertex $m\in Q$ is
\begin{equation}
 d_m= \kappa |\pi_e^{-1}(m)|.
\end{equation}
Thus, for $m\ge0$,
$$
d_m=
\begin{cases} 
   \kappa \oG^n\oH^n &\text{if } m=2n  \\
   \kappa \oG^n \oH^{n+1} &\text{if } m=2n+1,
\end{cases} 
$$
and the same formulas hold on the negative side with $M$ and $N$ exchanged. 

Let $\mathcal{L}=\mathcal{L}_Q$ be the normalized Laplacian on $Q$, and denote $\mathcal{M}=I-\mathcal{L}$.
From
$$
\mathcal{M}(m,n)=\frac{w(m,n)}{\sqrt{d_md_n}},
$$
we obtain the following.
\begin{proposition} \label{propM2}
The operator $\mathcal{M}$ of the graph $\wl$ acts on $f\colon\Z \to \C$, as follows
\begin{align}
 & (\mathcal{M}f)(2n+1)=\frac{\sqrt{\oH}\mwt f(2n)+(\oH-1) \mwt f(2n+1)+\sqrt{\oG} \mwo f(2n+2)}{\kappa}, \quad n\ge0, \\
 & (\mathcal{M}f)(2n)=\frac{\sqrt{\oG} \mwo f(2n-1)+(\oG-1)\mwo f(2n)+\sqrt{\oH}\mwt f(2n+1)}{\kappa}, \qquad \ \, n\ge1, \\
 & (\mathcal{M}f)(0)=\frac{\sqrt{\oG} \mwo f(-1)+\sqrt{\oH} \mwt f(1)}{\kappa}.
\end{align}
\end{proposition}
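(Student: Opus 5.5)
The statement is a direct computation: each matrix coefficient $\mathcal{M}(m,n)=w(m,n)/\sqrt{d_md_n}$ is evaluated using the weights of Proposition~\ref{weight_fct2} and the degrees $d_m=\kappa|\pi_e^{-1}(m)|$ with fibers \eqref{fibers2}. Since $\wl$ is a line with loops, $(\mathcal{M}f)(m)$ involves only $f(m-1)$, $f(m)$ and $f(m+1)$. It therefore suffices to compute three coefficients at each vertex type: odd positive, even positive, and $0$. The negative side then follows by the $(\oG,\mwo)\leftrightarrow(\oH,\mwt)$ symmetry recorded in Proposition~\ref{weight_fct2} and in the fiber formula.

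First I would verify that $d_m=\kappa|\pi_e^{-1}(m)|$ is consistent with the weights. For $m=2n+1$, summing $w(2n,2n+1)+w(2n+1,2n+1)+w(2n+1,2n+2)$ gives
$$\oG^n\oH^{n+1}\mwt+\oH(\oH-1)\oG^n\oH^n\mwt+\oG^{n+1}\oH^{n+1}\mwo=\oG^n\oH^{n+1}(\oH\mwt+\oG\mwo).$$
This is also the general covering fact: each vertex of $\mathcal Y$ has degree $\kappa$, and the quotient weights sum over the fiber.

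Next I would compute the ratios at $m=2n+1$.
\begin{itemize}
\item For the neighbour $2n$: $\oG^n\oH^{n+1}\mwt/(\kappa\sqrt{\oG^{2n}\oH^{2n+1}})=\sqrt{\oH}\mwt/\kappa$.
\item For the loop: $\oH(\oH-1)\oG^n\oH^n\mwt/(\kappa\oG^n\oH^{n+1})=(\oH-1)\mwt/\kappa$.
\item For the neighbour $2n+2$: $\oG^{n+1}\oH^{n+1}\mwo/(\kappa\sqrt{\oG^{2n+1}\oH^{2n+2}})=\sqrt{\oG}\mwo/\kappa$.
\end{itemize}
The even vertices $2n$ with $n\ge1$ work the same way, using $w(2n,2n)=\oG(\oG-1)\oG^{n-1}\oH^n\mwo$, which gives the loop coefficient $(\oG-1)\mwo/\kappa$.

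At $m=0$ there is no loop, and $d_0=\kappa$. The right neighbour gives $\oH\mwt/\sqrt{\kappa\cdot\kappa\oH}=\sqrt{\oH}\mwt/\kappa$. The left neighbour, by symmetry, gives $\sqrt{\oG}\mwo/\kappa$.

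The only real obstacle is bookkeeping the exponents of $\oG$ and $\oH$ in the fibers at the parity boundaries, in particular the indexing $w(2n-1,2n)$ versus $w(2n,2n+1)$. I would check these against Figure~\ref{fig:covering2} for small $n$ (for example $w(1,2)=\oG\oH\mwo$ and $w(2,3)=\oG\oH^2\mwt$) before carrying out the general computation.
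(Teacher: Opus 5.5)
Your proposal is correct and follows the paper's argument: the paper likewise reads the formulas off directly from $\mathcal{M}(m,n)=w(m,n)/\sqrt{d_md_n}$, using the weights of Proposition~\ref{weight_fct2} and $d_m=\kappa|\pi_e^{-1}(m)|$. Your coefficient computations check out, including the exponent bookkeeping for the neighbours $2n\pm1$, the loops, and the vertex $0$.
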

The action on the negative side is obtained by exchanging the roles of pairs $(\oG,\mwo)$ and $(\oH,\mwt)$ in the corresponding positive-side formulas.

\subsection{Spectral analysis of the projected Laplacian}
As a step towards the proof of Theorem~\ref{K2}, we solve the spectral problem for the self-adjoint operator $\mathcal{L}$ in the Hilbert space $\ell^2(\Z)$.

\subsubsection{Eigenvalues and eigenfunctions}

\begin{proposition}\label{spectrum_L^pr2}
The $\pi_e$-projected Laplacian $\mathcal{L}$ on $\wl$ has two discrete eigenvalues, given by
\begin{equation}
 \lambda_1=\frac{(\oH+1)\mwt}{\kappa}, \quad
  \lambda_2=1+\frac{\mwo+\mwt}{\kappa}.
\end{equation}
The corresponding normalized real eigenfunctions $f_1,f_2\in\ell^2(\Z)$ are given by
\begin{equation}\label{disc_efct1}
 f_1(m)=\beta_1
\begin{cases}
  (-1)^n \big(\frac{\oG}{\oH}\big)^{|n|/2} & \text{if } m=2n \\ 
  (-1)^{n+1}\oG^{n/2}\oH^{-(n+1)/2} & \text{if } m=2n+1 \\ 
  (-1)^n\oG^{(n+1)/2}\oH^{-n/2} & \text{if } m=-2n-1, 
\end{cases}
\end{equation}
and
\begin{equation}\label{disc_efct2}
 f_2(m)=\beta_2
\begin{cases}
  (\oG\oH)^{-|n|/2} & \text{if } m=2n \\ 
  -\oG^{-n/2}\oH^{-(n+1)/2} & \text{if } m=2n+1 \\ 
  -\oG^{-(n+1)/2}\oH^{-n/2} & \text{if } m=-2n-1. 
\end{cases}
\end{equation}
Here $n\in\Z$ in the even cases and $n\ge0$ in the odd cases, and
\begin{equation}\label{betas12}
 \beta_1=\sqrt{\frac{\oH-\oG}{(\oG+1)(\oH+1)}}, \quad \beta_2=\sqrt{\frac{\oG\oH-1}{(\oG+1)(\oH+1)}}.
\end{equation}
It also has generalized eigenvalues, 
parameterized by $x\in(0,\pi)$ and $\epsilon\in\{\pm1\}$, given by
\begin{equation}
 \lambda_{\epsilon,x} 
 =\frac{(\oG+1)\mwo+(\oH+1)\mwt-\epsilon R_x}{2\kappa},
\end{equation}
where
\begin{equation}\label{R_x2}
R_x=\sqrt{\big((\oG+1)\mwo-(\oH+1)\mwt\big)^2+4(\oG+\oH)\mwo\mwt+8\sqrt{\oG\oH}\mwo\mwt\cos x}.
\end{equation}
For $x\in(0,\pi)$, denote
\begin{equation}\label{def:rho_ex}
 \rho_{\epsilon,x}:=\kappa(1-\lambda_{\epsilon,x})=\frac{(\oG-1)\mwo+(\oH-1)\mwt+\epsilon R_x}{2}
\end{equation}
and
\begin{equation}\label{def:AandB_ex}
 A_{\epsilon,x}:=\rho_{\epsilon,x}-(\oH-1)\mwt, \quad B_{\epsilon,x}:=\rho_{\epsilon,x}-(\oG-1)\mwo.
\end{equation}
Define also
\begin{equation}\label{def:D_ex}
D_{\epsilon,x}:=\frac{A_{\epsilon,x}B_{\epsilon,x}\Big((\oG-1)\mwo+(\oH-1)\mwt\Big) - (A_{\epsilon,x}-B_{\epsilon,x})(\oG\mwo^2-\oH\mwt^2)}{2 A_{\epsilon,x} \sqrt{\oG\oH}\mwo\mwt \sin x}.
\end{equation}
For each $\epsilon\in\{\pm1\}$ and $x\in(0,\pi)$, the generalized eigenspace corresponding to $\lambda_{\epsilon,x}$ is two-dimensional and is spanned by the real generalized eigenfunctions $f^{(1)}_{\epsilon,x}$ and $f^{(2)}_{\epsilon,x}$, given by
\begin{equation} \label{gen_efct1}
 f^{(1)}_{\epsilon,x}(m)=
\begin{cases}
  \cos nx & \text{if } m=2n \\ 
  \frac{\sqrt{\oH}\mwt \cos nx+\sqrt{\oG}\mwo \cos((n+1)x)}{A_{\epsilon,x}} & \text{if } m=2n+1 \\ 
  \cos nx+D_{\epsilon,x}\sin nx & \text{if } m=-2n \\ 
  \frac{\sqrt{\oG}\mwo \big(\cos nx+D_{\epsilon,x}\sin nx\big)+\sqrt{\oH} \mwt \big(\cos((n+1)x)+D_{\epsilon,x}\sin((n+1)x)\big)}{B_{\epsilon,x}} & \text{if } m=-2n-1 \\ 
\end{cases}
\end{equation}
and 
\begin{equation} \label{gen_efct2}
 f^{(2)}_{\epsilon,x}(m)=
\begin{cases}
  \sin nx & \text{if } m=2n \\ 
   \frac{\sqrt{\oH}\mwt \sin nx+\sqrt{\oG} \mwo \sin((n+1)x)}{A_{\epsilon,x}} & \text{if } m=2n+1 \\ 
  -\frac{B_{\epsilon,x}}{A_{\epsilon,x}} \sin nx & \text{if } m=-2n \\ 
  -\frac{\sqrt{\oG}\mwo \sin nx+\sqrt{\oH} \mwt \sin((n+1)x)}{A_{\epsilon,x}} & \text{if } m=-2n-1 
\end{cases}
\end{equation}
where $n\ge0$ in all cases.
\end{proposition}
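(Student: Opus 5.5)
The plan mirrors the proof of Proposition~\ref{spectrum_L^pr1}, with the half-line recurrence replaced by a period-two recurrence on each side of $0$. Using Proposition~\ref{propM2}, I write $\mathcal{M}f=\mu f$ and set $\rho:=\kappa\mu$. On the positive side I group values in pairs $(f(2n),f(2n+1))$. The odd-site equation reads
\[
(\rho-(\oH-1)\mwt)f(2n+1)=\sqrt{\oH}\mwt f(2n)+\sqrt{\oG}\mwo f(2n+2),
\]
so $f(2n+1)=\bigl(\sqrt{\oH}\mwt f(2n)+\sqrt{\oG}\mwo f(2n+2)\bigr)/A$, with $A=\rho-(\oH-1)\mwt$. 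This explains the shape of the odd entries in \eqref{gen_efct1}--\eqref{gen_efct2}. Substituting into the even-site equation for $n\ge1$ eliminates odd sites and gives a constant-coefficient three-term recurrence for $g(n)=f(2n)$:
\[
AB\,g(n)=\sqrt{\oG\oH}\mwo\mwt\bigl(g(n-1)+g(n+1)\bigr)+(\oG\mwo^2+\oH\mwt^2)g(n),
\]
where $B=\rho-(\oG-1)\mwo$.

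The ansatz $g(n)=\xi^n$ gives the dispersion relation
\[
AB-(\oG\mwo^2+\oH\mwt^2)=\sqrt{\oG\oH}\mwo\mwt(\xi+\xi^{-1}).
\]
This is quadratic in $\rho$. Solving it with $\xi=e^{ix}$ should reproduce $\rho_{\epsilon,x}$ and $R_x$ in \eqref{R_x2}; I will check this by expanding $AB$. By the $(\oG,\mwo)\leftrightarrow(\oH,\mwt)$ symmetry, the negative side produces the same recurrence for $f(-2n)$ with $A,B$ interchanged, and the same dispersion relation. Hence both half-lines share the same $\xi$.

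The remaining conditions are the two equations at $m=\pm1$ and the one at $m=0$. They glue the two half-line solutions $f(2n)=\alpha\xi^n+\beta\xi^{-n}$ and $f(-2n)=\alpha'\xi^n+\beta'\xi^{-n}$, with $f(0)$ shared.

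\emph{Discrete spectrum ($|\xi|\neq1$).} Taking $|\xi|>1$, the $\ell^2$ condition forces $\alpha=\alpha'=0$. The gluing then becomes a homogeneous $2\times2$ linear system in $\beta$ and $\beta'$ (with $f(0)=\beta=\beta'$ and one extra condition from $m=0$). A nontrivial solution requires an algebraic equation in $\rho$. I expect it to factor, with roots giving $\mu=1-\lambda_1$ and $\mu=1-\lambda_2$. Concretely:
\begin{itemize}
\item for $\lambda_2$, $\rho=-(\mwo+\mwt)$ and $\xi^{-1}$ should equal $(\oG\oH)^{-1/2}$;
\item for $\lambda_1$, $\rho=\oG\mwo-\mwt$ and $\xi^{-1}=-(\oG/\oH)^{1/2}$ on the positive side, with the reciprocal decay on the negative side.
\end{itemize}
I will verify these directly by plugging the explicit $f_1,f_2$ from \eqref{disc_efct1}--\eqref{disc_efct2} into the three families of equations, which is cleaner than deriving them. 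Then I check $|\xi|>1$, which uses $\oH>\oG$ for $f_1$ and $\oG\oH>1$ for $f_2$. This decay is exactly what makes $\beta_1$ and $\beta_2$ real. Computing the geometric sums gives the normalizations \eqref{betas12}. Separately, I must rule out other roots, either by showing they give $|\xi|\le1$ or by showing they violate the $m=0$ equation.

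\emph{Continuous spectrum ($|\xi|=1$).} With $\xi=e^{ix}$, $x\in(0,\pi)$, there are four coefficients subject to three linear gluing conditions, i.e.\ one linear relation beyond the shared value $f(0)$. This leaves a two-dimensional space. I normalize the positive side as $\cos nx$ or $\sin nx$ and solve the conditions at $m=0,\pm1$ for the negative-side combination.
\begin{itemize}
\item For the sine solution, $f(0)=0$, and the condition at $m=0$ forces the ratio $-B/A$ between the two sides.
\item For the cosine solution, the $m=0$ and $m=-1$ equations determine the coefficient $D_{\epsilon,x}$ of $\sin nx$.
\end{itemize}
This is the step I expect to be the main obstacle. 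To get the stated closed form for $D_{\epsilon,x}$, I have to repeatedly use the dispersion relation to trade $\cos x$ for $AB$, and the identity $\rho_{+}+\rho_{-}=(\oG-1)\mwo+(\oH-1)\mwt$. I would first do the computation symbolically with $\rho$ kept as a root of the quadratic, and only then substitute.

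Finally, $A_{\epsilon,x}\neq0$ and $\sin x\neq0$ on $(0,\pi)$ guarantee that the formulas are well defined. Boundedness of these solutions shows that they are genuine generalized eigenfunctions.
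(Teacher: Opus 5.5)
Your route is the paper's. You eliminate the odd sites, obtain the recurrence with dispersion relation $AB=a^2+b^2+ab(\xi+\xi^{-1})$, where $a=\sqrt{M}\omega_1$ and $b=\sqrt{N}\omega_2$, and glue the two half-lines at the origin. The existence parts are sound. Substituting $f_1,f_2$ directly into Proposition~\ref{propM2} works and needs no division. Your real normalization $f(2n)=\cos nx$ or $\sin nx$ reproduces $-B_{\epsilon,x}/A_{\epsilon,x}$ and $D_{\epsilon,x}$; it is equivalent to the paper's choice $(\alpha,\beta)=(1,0)$ followed by taking real and imaginary parts.

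Two small repairs are needed there. First, $A_{\epsilon,x}\neq0$ and $B_{\epsilon,x}\neq0$ (the latter is needed for $f^{(1)}_{\epsilon,x}(-2n-1)$) should be justified; they follow from $A_{\epsilon,x}B_{\epsilon,x}=(a-b)^2+2ab(1+\cos x)>0$. Second, your dimension count is muddled: four coefficients minus three conditions would leave one dimension, not two. Given the positive side, the negative side is constrained only by the shared value $f(0)$ and the equation at $m=0$; the equations at $m=\pm1$ just define $f(\pm1)$. These two conditions are independent because $\xi\neq\xi^{-1}$, which gives exactly two dimensions.

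The genuine gap is in showing that $\lambda_1,\lambda_2$ are the \emph{only} eigenvalues. Your scheme ($\alpha=\alpha'=0$, $\beta=\beta'$, one condition at $m=0$) presupposes that the odd values are determined by the even ones. That means dividing by $A$ on the positive side and by $B$ on the negative side. For $|\xi|\neq1$ this can fail, and it fails at one of the eigenvalues you are after. At $\rho_1=M\omega_1-\omega_2$ one has $A=M\omega_1-N\omega_2$ and $B=\omega_1-\omega_2$, so in the unweighted case $\omega_1=\omega_2$ you get $B=0$ exactly at $\lambda_1$.

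If $A=0$, the positive odd-site equations become the constraint $bf(2n)+af(2n+2)=0$. So the even values are forced to be geometric with ratio $-b/a$. The odd values then solve the first-order recurrence $bf(2n+1)=Bf(2n)-af(2n-1)$ with $f(1)$ free. The case $B=0$ is symmetric on the negative side. The two values $\rho=(N-1)\omega_2$ and $\rho=(M-1)\omega_1$ must therefore be analysed on their own, as the paper does. There the sewing equation reduces to $(\omega_1-\omega_2)(M\omega_1-N\omega_2)(M\omega_1+\omega_2)(\omega_1+N\omega_2)=0$. Together with the decay requirement $|\xi|>1$, this leaves only $M\omega_1=N\omega_2$ (resp.\ $\omega_1=\omega_2$), and in that case the eigenvalue is again $\lambda_1$.

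In the generic case $AB\neq0$, ``I expect it to factor'' has to be replaced by the actual elimination. The $m=0$ equation gives $\rho$ explicitly as a function of $\xi$. Substituting this into the dispersion relation splits off the factor $a^2+b^2+ab(\xi+\xi^{-1})=(b+a\xi)(b+a\xi^{-1})$, which equals $AB$. What remains is the weight-independent equation $\bigl(\sqrt{MN}(\xi+\xi^{-1})+M+N\bigr)\bigl(\sqrt{MN}(\xi+\xi^{-1})-MN-1\bigr)=0$. Its roots with $|\xi|>1$ are exactly $\xi_1=-\sqrt{N/M}$ and $\xi_2=\sqrt{MN}$. The discarded factor vanishes precisely at $\xi\in\{-a/b,-b/a\}$, which are the degenerate cases. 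That is why those cases cannot be accepted or rejected by the generic computation and need the separate treatment above.
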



\begin{proof}
Consider the equation $\mathcal{M}f=\mu f$,
and denote
\begin{equation}\label{def:k,rho}
 \rho:=\kappa\mu,
\end{equation}
and
\begin{equation}\label{def:nu,A,B}
  A:=\rho-(\oH-1)\mwt, \quad B:=\rho-(\oG-1)\mwo.
\end{equation}
For notational convenience, we also denote
\begin{equation} \label{def:a,b}
 a:=\sqrt{\oG}\mwo, \quad b:=\sqrt{\oH}\mwt.
\end{equation}
For $n\ge0$, denote
\begin{equation*}
 u_n:=f(2n), \quad v_n:=f(2n+1), \quad \check{u}_n:=f(-2n), \quad \check{v}_n:=f(-2n-1).
\end{equation*}
Since $\mathcal{L}=I-\mathcal{M}$, we have
\begin{equation} \label{rel_lambda_rk}
 \lambda=1-\mu=1-\frac{\rho}{\kappa}.
\end{equation}

By Proposition~\ref{propM2}, the equation $\mathcal{M}f=\mu f$ is equivalent to 
\begin{align}
& Av_n =b u_n +a u_{n+1}, \quad n\ge0, \label{l1_prop_eig_not} \\
& Bu_n=a v_{n-1}+b v_n, \quad n\ge1, \label{l2_prop_eig_not} \\
& B\check{v}_n=a \check{u}_n+b \check{u}_{n+1}, \quad n\ge0, \label{l1_prop_eig_neg_not} \\
& A\check{u}_n=b \check{v}_{n-1} + a \check{v}_n, \quad \, n\ge1, \label{l2_prop_eig_neg_not} 
\end{align}
with the sewing equations at the origin
\begin{align}
& \rho u_0=a \check{v}_0 +b v_0, \label{l3_prop_eig_not} \\
& u_0=\check{u}_0. \label{eig_not_0}
\end{align}

Multiplying~\eqref{l2_prop_eig_not} by $A$, using~\eqref{l1_prop_eig_not}, we obtain the following second-order linear equation with constant coefficients
\begin{align}\label{linear_equ_second_order2}
 AB u_n 
 &=ab (u_{n-1}+u_{n+1})+(a^2+b^2)u_n, \quad n\ge1,
\end{align}
and the same equation holds for $\check{u}_n$.
Seeking solutions of the form $u_n=\xi^n$, we obtain the spectral (dispersion) relation
\begin{equation}\label{spec_eq1}
 \frac{AB-(a^2+b^2)}{ab}= \xi+\xi^{-1}.
\end{equation}
Hence the general solution of~\eqref{linear_equ_second_order2} is of the form
\begin{equation}\label{gen_sol_u_n}
 u_n=\alpha\xi^n+\beta\xi^{-n}, \quad \alpha,\beta\in\C, \quad n\ge0,
\end{equation}
where $\xi$ satisfies~\eqref{spec_eq1}. 
Similarly, we also have
\begin{equation}\label{gen_sol_u_n_check}
 \check{u}_n=\check{\alpha}\xi^n+\check{\beta}\xi^{-n}, \quad \check{\alpha},\check{\beta}\in\C, \quad n\ge0.
\end{equation}

{\it The case $\vert \xi \vert \ne1$.}
Since~\eqref{spec_eq1} is invariant under $\xi \leftrightarrow \xi^{-1}$, we may assume that $\vert \xi \vert>1$. The condition $f\in \ell^2(\Z)$ forces 
$$
\alpha=\check{\alpha}=0.
$$
Therefore, from~\eqref{eig_not_0}, we have $\beta=\check{\beta}$ and we obtain
\begin{equation} \label{disc_u_both}
 u_n=\check{u}_n=\beta \xi^{-n}, \quad n\ge0.
\end{equation}

We first consider the cases $A=0$ and $B=0$.
If $A=0$, then 
\begin{equation} \label{Aneq0_rho}
 \rho=(\oH-1)\mwt,
\end{equation}
 and from~\eqref{l1_prop_eig_not}, we obtain
$$
u_{n+1}=- \frac{b}{a} u_n.
$$
Since $u_{n+1}=\xi^{-1}u_n$, we obtain
$$
\xi=- \frac{a}{b},
$$
and the condition $|\xi|>1$ requires $a>b$, or equivalently
\begin{equation} \label{Aneq0_w2_cond}
 \mwt<\sqrt{\frac{\oG}{\oH}}\mwo.
\end{equation}

 Since $B\neq0$ (otherwise, from \eqref{l1_prop_eig_not} and \eqref{l1_prop_eig_neg_not}, we would have $\xi=-\frac{a}{b}=-\frac{b}{a}$, which contradicts $|\xi|>1$), from \eqref{l2_prop_eig_not} and \eqref{l1_prop_eig_neg_not}, the square-summable solutions are 
\begin{equation}\label{Aneq0_v_both}
 v_n=-\frac{bB}{a^2-b^2}\beta\xi^{-n}, \quad \check{v}_n=\frac{a^2-b^2}{aB}\beta\xi^{-n}, \quad n\ge0.
\end{equation}
Substituting \eqref{disc_u_both} and \eqref{Aneq0_v_both} into the sewing equation \eqref{l3_prop_eig_not}, we obtain
$$
\rho=\frac{a^2-b^2}{B}-\frac{Bb^2}{a^2-b^2},
$$
and substituting the definitions of $B$, $a$, $b$ and $\rho$ from \eqref{def:nu,A,B}, \eqref{def:a,b}, and \eqref{Aneq0_rho}, this becomes
\begin{equation}\label{factorization}
 (\mwo-\mwt)(\oG\mwo-\oH\mwt)(\oG\mwo+\mwt)(\mwo+\oH\mwt)=0.
\end{equation}
Since $\mwo,\mwt>0$, the last two factors are positive. Furthermore, \eqref{Aneq0_w2_cond} and $\oG<\oH$ imply $\mwt<\mwo$, so we conclude
\begin{equation} \label{Aneq0_cond}
 \oG\mwo=\oH\mwt.
\end{equation}
Hence, in the case $A=0$, we obtain an $\ell^2$-eigenfunction when $\oG\mwo=\oH\mwt$,
with corresponding eigenvalue 
$$
\lambda=1-\frac{\rho}{\kappa}=1-\frac{\oH-1}{2\oH}=\frac{\oH+1}{2\oH},
$$
where we used \eqref{def:k,rho}, \eqref{rel_lambda_rk}, \eqref{Aneq0_rho} and \eqref{Aneq0_cond}.

The case $B=0$ is similar, with the roles of $(\oG,\mwo,a)$ and $(\oH,\mwt,b)$ exchanged. From the sewing equation~\eqref{l3_prop_eig_not}, we obtain the same factorization~\eqref{factorization}. Since in this case $a<b$, we conclude 
\begin{equation} \label{Bneq0_cond}
 \mwo=\mwt,
\end{equation}
 and the corresponding eigenvalue is
$$
\lambda=1-\frac{\oG-1}{\oG+\oH}=\frac{\oH+1}{\oG+\oH}.
$$
Note that the cases $A=0$ and $B=0$ are mutually exclusive. 

We now suppose that $AB\neq0$. From~\eqref{l1_prop_eig_not}, \eqref{l1_prop_eig_neg_not} and~\eqref{disc_u_both},
we obtain
\begin{align} \label{ABneq0_both_v}
 v_n=\frac{\beta \xi^{-n}}{A}\Big(b +a \xi^{-1} \Big), \quad  
 \check{v}_n=\frac{\beta \xi^{-n}}{B}\Big(a+b \xi^{-1} \Big), \quad n\ge0.  
\end{align}
Substituting~\eqref{disc_u_both} and \eqref{ABneq0_both_v}  into the sewing equation~\eqref{l3_prop_eig_not}, we obtain
\begin{align*}
   \rho =\frac{a}{B}\Big(a +b \xi^{-1} \Big) + \frac{b}{A}\Big(b+a \xi^{-1} \Big). 
\end{align*}
Multiplying by $AB$ and using~\eqref{def:nu,A,B}, \eqref{def:a,b} and the dispersion relation~\eqref{spec_eq1}, we obtain
\begin{align}
 \rho = -\frac{\oG(\oH-1)\mwo + \oH(\oG-1)\mwt + \sqrt{\oG\oH}\xi^{-1}\Big((\oG-1)\mwo+(\oH-1)\mwt \Big)}{\sqrt{\oG\oH}(\xi-\xi^{-1})}. \label{rho_xi}
\end{align}
Using~\eqref{rho_xi} in~\eqref{def:nu,A,B}, and substituting the resulting expressions into~\eqref{spec_eq1}, we obtain, after simplification and using $AB\neq0$,
\begin{equation*}
 \oG\oH(\xi+\xi^{-1})^2-\sqrt{\oG\oH}(\oG-1)(\oH-1)(\xi+\xi^{-1})-(\oG+\oH)(\oG\oH+1)=0,
\end{equation*}
factorizing, we obtain
\begin{equation*}
\Big(\sqrt{\oG\oH}(\xi+\xi^{-1})+\oG+\oH\Big) \Big(\sqrt{\oG\oH}(\xi+\xi^{-1})-(\oG\oH+1)\Big)=0.
\end{equation*}
Thus, the two solutions satisfying $|\xi|>1$ are
\begin{equation} \label{ABneq0_sol_xi}
 \xi_1=-\sqrt{\frac{\oH}{\oG}}, \quad \xi_2=\sqrt{\oG\oH}.
\end{equation}
Substituting~\eqref{ABneq0_sol_xi} into~\eqref{rho_xi}, we obtain
\begin{equation} \label{ABneq0_sol_rho}
 \rho_1=\oG\mwo-\mwt, \quad \rho_2=-(\mwo+\mwt),
\end{equation}
and from~\eqref{rel_lambda_rk}, the corresponding eigenvalues are
\begin{equation} \label{ABneq0_sol_lambda}
 \lambda_1=\frac{(\oH+1)\mwt}{\oG\mwo+\oH\mwt}, \quad  \lambda_2=1+\frac{\mwo+\mwt}{\oG\mwo+\oH\mwt}.
\end{equation}
Substituting $\rho_1$ into~\eqref{def:nu,A,B}, we have
$$
A=\oG\mwo-\oH\mwt, \quad B=\mwo-\mwt.
$$
Thus, the cases $A=0$ and $B=0$ considered above correspond to the two choices of weights $\oG\mwo=\oH\mwt$ and $\mwo=\mwt$, respectively, which were excluded from this computation. Therefore, combining all three cases, $\lambda_1$ in~\eqref{ABneq0_sol_lambda} is an eigenvalue for all $\mwo,\mwt>0$. 
For $\rho_2$, both $A$ and $B$ are nonzero for all positive weights, so $\lambda_2$ in~\eqref{ABneq0_sol_lambda} is also an eigenvalue.

We determine the corresponding $\ell^2$-eigenfunctions. From~\eqref{disc_u_both}, for $j=1,2$, we have
\begin{equation}\label{sol_even}
 f_j(2n)=\beta_j\xi_j^{-|n|}, \quad n\in\Z.
\end{equation}
Using~\eqref{def:nu,A,B}, \eqref{l1_prop_eig_not}, \eqref{l1_prop_eig_neg_not}, and the values of $\xi_j$ in~\eqref{ABneq0_sol_xi} and $\rho_j$ in~\eqref{ABneq0_sol_rho}, we obtain, for $n\ge0$,
\begin{equation}\label{sol_odd_pos}
 f_j(2n+1)=-\frac{\beta_j}{\sqrt{\oH}}\xi_j^{-n}, \quad j=1,2,
\end{equation}
and
\begin{equation}\label{sol_odd_neg}
 f_1(-2n-1)=\sqrt{\oG}\beta_1\xi_1^{-n}, \quad  f_2(-2n-1)=-\frac{\beta_2}{\sqrt{\oG}}\xi_2^{-n}.
\end{equation}
In the cases $A=0$ or $B=0$, the same formulas for $f_1$ are obtained directly from~\eqref{l1_prop_eig_not}--\eqref{l2_prop_eig_neg_not}.
Finally, summing the corresponding geometric series, and using~\eqref{ABneq0_sol_xi}, we obtain
\begin{equation*}
   \|f_{1}\|^2_{\ell^2(\Z)} = |\beta_1|^2 \frac{(\oG+1)(\oH+1)}{\oH-\oG}, \quad \|f_{2}\|^2_{\ell^2(\Z)}= |\beta_2|^2 \frac{(\oG+1)(\oH+1)}{\oG\oH-1}.
\end{equation*}
Thus, the choice of $\beta_1$ and $\beta_2$ given in~\eqref{betas12} normalizes both eigenfunctions. 

\vskip 0.3cm
{\it The case $\vert \xi \vert=1$.}
Writing $\xi=e^{ix}$, where $x\in(0,\pi)$, the spectral equation~\eqref{spec_eq1} becomes 
\begin{equation}\label{spec_eq_cos}
 AB=a^2+b^2+2ab\cos(x)>0,
\end{equation}
hence $A,B\neq0$.
Using the definitions of $A$ and $B$ in~\eqref{def:nu,A,B}, we obtain
\begin{equation}
 \rho=\rho_{\epsilon,x}=\frac{(\oG-1)\mwo+(\oH-1)\mwt+\epsilon R_x}{2}, \quad \epsilon\in\{\pm1\},
\end{equation}
where $R_x$ is defined in~\eqref{R_x2}
\begin{equation*}
R_x=\sqrt{\big((\oG+1)\mwo-(\oH+1)\mwt\big)^2+4(\oG+\oH)\mwo\mwt+8\sqrt{\oG\oH}\mwo\mwt\cos x}.
\end{equation*}
Therefore, from~\eqref{rel_lambda_rk}, the generalized eigenvalues are given by
\begin{equation}
 \lambda_{\epsilon,x} =1-\frac{\rho_{\epsilon,x}}{\kappa}
 =\frac{(\oG+1)\mwo+(\oH+1)\mwt-\epsilon R_x}{2\kappa}.
\end{equation}
Now, fix $\epsilon\in\{\pm1\}$ and $x\in(0,\pi)$, and denote 
$
\rho=\rho_{\epsilon,x}. 
$
Using~\eqref{gen_sol_u_n} in~\eqref{l1_prop_eig_not} and~\eqref{gen_sol_u_n_check} in~\eqref{l1_prop_eig_neg_not}, we obtain
\begin{equation}\label{gensol:v_n}
 v_n=\frac{\alpha(b+a\xi)\xi^n+\beta(b+a\xi^{-1})\xi^{-n}}{A}, \quad n\ge0,
\end{equation}
and
\begin{equation}\label{gensol:v_n_check}
 \check{v}_n=\frac{\check{\alpha}(a+b\xi)\xi^n+\check{\beta}(a+b\xi^{-1})\xi^{-n}}{B}, \quad n\ge0.
\end{equation}
The sewing equations~\eqref{l3_prop_eig_not}-\eqref{eig_not_0} give two linearly independent equations for $\check{\alpha}$ and $\check{\beta}$, since $\xi\neq\xi^{-1}$ for $x\in(0,\pi)$. Therefore, $\check{\alpha}$ and $\check{\beta}$ are uniquely determined in terms of $\alpha$ and~$\beta$, so the complex generalized eigenspace is two-dimensional. 

Let us first choose
$$
(\alpha,\beta)=(1,0).
$$
The positive entries of $f$ are given by~\eqref{gen_sol_u_n} and~\eqref{gensol:v_n} 
\begin{equation}
 f(2n)=u_n=\xi^n, \quad f(2n+1)=v_n=\frac{b+a\xi}{A}\xi^n, \quad n\ge0.
\end{equation}
The sewing equations~\eqref{eig_not_0} and~\eqref{l3_prop_eig_not} become
\begin{align}
 &\check{\alpha}+\check{\beta}=1, \label{sew1} \\
 &\check{\alpha}\xi+\check{\beta}\xi^{-1}=\frac{B}{ab}\Big(\rho-\frac{a^2}{B}-\frac{b^2}{A}\Big)-\frac{B}{A}\xi. \label{sew2}
\end{align}
Solving~\eqref{sew1}-\eqref{sew2}, using~\eqref{spec_eq_cos} and denoting $D=D_{\epsilon,x}$ defined in~\eqref{def:D_ex},
we obtain
\begin{equation}\label{ab_check}
 \check{\alpha}=\frac{1}{2}\Big(1-\frac{B}{A}\Big)-\frac{i}{2}D, \quad \check{\beta}=\frac{1}{2}\Big(1+\frac{B}{A}\Big)+\frac{i}{2}D.
\end{equation}
The negative entries of $f$ are then determined by~\eqref{gen_sol_u_n_check} and~\eqref{gensol:v_n_check}. 

Since $A$, $B$, $\rho$ are real and $\xi^{-1}=\overline{\xi}$, the second choice $(\alpha,\beta)=(0,1)$
gives the complex conjugate solution $\overline{f}$. Thus 
$$
f^{(1)}_{\epsilon,x}=\operatorname{Re}(f), \quad f^{(2)}_{\epsilon,x}=\operatorname{Im}(f),
$$
form a basis of the generalized eigenspace. Using 
$$
\xi^n=\cos nx+i\sin nx
$$
in the formulas above gives~\eqref{gen_efct1} and~\eqref{gen_efct2}.

\end{proof}

\subsubsection{Scalar products of (generalized) eigenfunctions}
We first compute the distributional scalar products of the generalized eigenfunctions. Together with the completeness relation below, these determine the continuous part of the spectral resolution.
\begin{proposition}\label{scalar_products2_prop}
We have the following scalar products, for $\epsilon,\epsilon'\in\{\pm1\}$, $x,y\in(0,\pi)$, $i,j\in\{1,2\}$,
\begin{equation}\label{sc_pr_matrix}
  \big \langle f^i_{\epsilon,x},  f^j_{\epsilon',y} \big \rangle=\mathcal{H}_{\epsilon}(x)_{ij}\delta_{\epsilon,\epsilon'}\delta(x-y)
\end{equation}
where $\delta(x)$ is Dirac's delta function,
\begin{equation}\label{Matrix_H}
 \mathcal{H}_{\epsilon}(x):=\frac{\pi}{2} \Bigl(1+\frac{B_{\epsilon,x}}{A_{\epsilon,x}}\Bigr)H_{\epsilon}(x),
\end{equation}
and $H_{\epsilon}(x)$ is a symmetric matrix whose entries are
\begin{align}
& H_{\epsilon}(x)_{11}=1+\frac{A_{\epsilon,x}}{B_{\epsilon,x}}+\frac{A_{\epsilon,x}}{B_{\epsilon,x}} D^2_{\epsilon,x}, \label{matrix_H11} \\
& H_{\epsilon}(x)_{12}=H_{\epsilon}(x)_{21}= -D_{\epsilon,x}, \label{matrix_H12} \\
& H_{\epsilon}(x)_{22}=1+\frac{B_{\epsilon,x}}{A_{\epsilon,x}}. \label{matrix_H22}
\end{align}
\end{proposition}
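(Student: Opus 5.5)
The plan is to avoid expanding the product termwise, as was done in Proposition~\ref{scalar_products1}. Here the $\check u,\check v$ pieces carry the coefficient $D_{\epsilon,x}$ and would produce many non-delta (principal value) terms whose cancellation is hard to track. Instead I would use a discrete Green (Christoffel--Darboux) identity for the self-adjoint Jacobi-type operator $\mathcal{M}$ of Proposition~\ref{propM2}, followed by a distributional limit.

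\textbf{Step 1: truncated Green identity.} Fix $f=f^{(i)}_{\epsilon,x}$ and $g=f^{(j)}_{\epsilon',y}$, which are real solutions of $\mathcal{M}f=\mu f$ and $\mathcal{M}g=\mu' g$ with $\mu=\rho_{\epsilon,x}/\kappa$ and $\mu'=\rho_{\epsilon',y}/\kappa$. Consider the truncated sum $S_N=\sum_{|m|\le 2N+1} f(m)g(m)$. Since $\mathcal{M}$ is a symmetric tridiagonal matrix, and the sewing equations \eqref{l3_prop_eig_not}--\eqref{eig_not_0} say that the point $0$ is an ordinary interior point for $\mathcal{M}$, we obtain
\[
(\mu-\mu')S_N=W_N^{+}-W_N^{-},
\]
where $W_N^{\pm}$ are two-point Wronskians at the cut edges $\{2N+1,2N+2\}$ and $\{-2N-1,-2N-2\}$, with coefficients $a/\kappa$ and $b/\kappa$ respectively. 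The origin contributes nothing. This is exactly where the sewing relations, and hence $D_{\epsilon,x}$, get absorbed.

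\textbf{Step 2: evaluate the boundary terms.} On the positive side I would insert $u_n=\xi^n$ (its real and imaginary parts) and $v_n$ from \eqref{gensol:v_n}. On the negative side I would use $\check u_n=\check\alpha\xi^n+\check\beta\xi^{-n}$ with \eqref{ab_check}. Each $W_N^\pm$ is then a bilinear combination of $\cos(N(x\pm y))$ and $\sin(N(x\pm y))$ with smooth coefficients.

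\textbf{Step 3: distributional limit.} Pair with test functions in $(x,y)$ and use the following facts:
\begin{itemize}
\item terms in $N(x+y)$ vanish by Riemann--Lebesgue, since $x+y\in(0,2\pi)$ stays away from $0$ on the support of the test function;
\item $\cos(N(x-y))/(\mu-\mu')$ has no delta part;
\item $\sin(N(x-y))/(x-y)\to\pi\delta(x-y)$.
\end{itemize}
For $\epsilon\neq\epsilon'$ the two bands are disjoint, by Corollary~\ref{Cor_spec2} (equivalently $R_\pi>0$), so $\mu-\mu'$ is bounded away from $0$ and the limit is $0$. This gives the factor $\delta_{\epsilon,\epsilon'}$. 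For $\epsilon=\epsilon'$ I would write $\mu-\mu'=\partial_x\mu\,(x-y)+O((x-y)^2)$. The coefficient of $\delta(x-y)$ is then the diagonal limit of the $\sin(N(x-y))$-coefficient of $W_N^+-W_N^-$, divided by $\partial_x\rho_{\epsilon,x}$.

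\textbf{Step 4: identify $\mathcal{H}_\epsilon(x)$.} Differentiate the dispersion relation \eqref{spec_eq_cos}, $AB=a^2+b^2+2ab\cos x$, to get $\partial_x\rho\,(A+B)=-2ab\sin x$. This produces the prefactor $\tfrac{\pi}{2}(1+B/A)$ after cancelling the $ab\sin x$ that appears in the Wronskian amplitudes. The positive side contributes the ``$1$'' parts of $H_\epsilon(x)$. The negative side contributes amplitudes $|\check\alpha|^2$, $|\check\beta|^2$ weighted by the odd-site factor, which by \eqref{ab_check} gives the terms $\tfrac{A}{B}(1+D^2)$ in $H_{11}$, the term $B/A$ in $H_{22}$, and $-D$ off-diagonal.

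\textbf{Main obstacle.} The main obstacle is Step 4, i.e.\ the algebra showing that the negative-side Wronskian amplitude simplifies to these entries using only \eqref{spec_eq_cos} and the definition \eqref{def:D_ex} of $D$. I would first carry it out in complex form with $\langle f,\bar g\rangle$ for $(\alpha,\beta)=(1,0)$ and $(0,1)$, then take real and imaginary parts. As a backup, I would expand termwise, as in Proposition~\ref{scalar_products1}, using \eqref{n,n}, \eqref{n,n+1} and their cosine analogues (e.g.\ $\sum_{n\ge0}\cos nx\cos ny=\tfrac{\pi}{2}\delta(x-y)+\tfrac12$). The Green identity already guarantees that all the smooth remainders cancel, so only the delta coefficients need to be collected.
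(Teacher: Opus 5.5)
Your proposal is correct, and it follows a genuinely different route from the paper's.

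\textbf{The paper's route.} For $\epsilon\neq\epsilon'$ it appeals formally to self-adjointness. For $\epsilon=\epsilon'$ it does what you wanted to avoid: it expands the products termwise and evaluates them with a catalogue of distributional identities, including the non-delta pieces (the constant $\tfrac12$ in $\sum_{n\ge0}\cos nx\cos ny$, and the principal-value kernel $C(x,y)$ with $2(\cos x-\cos y)C(x,y)=\sin y$). It reads off the $\delta$-coefficient using $AB=a^2+b^2+2ab\cos x$. Most of the effort then goes into checking by hand, via $A-B=d$ and the defining relation of $D$, that the smooth remainders $R_{12}$ and $R_{11}$ vanish.

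\textbf{What your route buys.} The Green (Christoffel--Darboux) identity makes that cancellation structural. Because the sewing turns $\mathcal{M}$ into an honest Jacobi matrix on $\Z$, only the two cut-edge Wronskians survive. The case $\epsilon\neq\epsilon'$ comes out of the same mechanism, using $\rho_{+,x}-\rho_{-,y}=(R_x+R_y)/2>0$. The $\delta$-coefficient then depends only on the plane-wave amplitudes at $\pm\infty$.

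Your Step 4, which you flagged as the main obstacle, is in fact short. Using $\partial_x\rho\,(A+B)=-2ab\sin x$:
\begin{itemize}
\item the positive side contributes $\pi(1+B/A)\,\delta(x-y)$ to $\sum f\bar g$;
\item the negative side contributes $\pi\frac{A+B}{B}\bigl(|\check\alpha|^2+|\check\beta|^2\bigr)\,\delta(x-y)$ to $\sum f\bar g$;
\item the negative side contributes $2\pi\frac{A+B}{B}\,\check\alpha\check\beta\,\delta(x-y)$ to $\sum fg$.
\end{itemize}
Inserting \eqref{ab_check} gives exactly $\mathcal{H}_{11}+\mathcal{H}_{22}$ for the first sum and $\mathcal{H}_{11}-\mathcal{H}_{22}+i(\mathcal{H}_{12}+\mathcal{H}_{21})$ for the second. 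The symmetry $\mathcal{H}_{12}=\mathcal{H}_{21}$ follows because the $\sum f\bar g$ coefficient is real. This confirms your attribution of the ``$1$'' parts to the positive side and of $\frac{A}{B}(1+D^2)$, $\frac{B}{A}$, $-D$ to the negative side.

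\textbf{What your route costs.} The analysis needs some care. Individual terms $c(x,y)e^{\pm iN(x-y)}/(\mu(x)-\mu(y))$ are not locally integrable, so they must be read in the principal-value sense. Only their sum $S_N$ is smooth, because $W_N^+-W_N^-$ vanishes on the diagonal. You also need $\partial_x\rho\neq0$ on $(0,\pi)$. This holds because $A+B=\epsilon R_x\neq0$ and $\sin x>0$.

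\textbf{What the paper's route buys.} It stays entirely within the elementary toolkit already used for Proposition~\ref{scalar_products1}, at the price of considerably heavier algebra.
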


\begin{proof}
 Let $\epsilon,\epsilon'\in\{\pm1\}$ and $x,y\in(0,\pi)$. 
 If $\epsilon\neq \epsilon'$, then $\lambda_{\epsilon,x} \neq \lambda_{\epsilon',y}$, since the intervals corresponding to $\epsilon=1$ and $\epsilon=-1$ are disjoint. 
 Thus, by self-adjointness of $\mathcal{L}$, for $i,j\in\{1,2\}$, we have 
 $$
 \big \langle f^{(i)}_{\epsilon,x}, f^{(j)}_{\epsilon',y} \big \rangle =0.
 $$
 
 Let us now fix $\epsilon \in\{\pm1\}$, and denote 
 $$
 A_z:=A_{\epsilon,z}, \quad B_z:=B_{\epsilon,z}, \quad D_z:=D_{\epsilon,z}, \quad z\in(0,\pi).
 $$
 We also introduce 
\begin{equation}\label{not:abcd}
 a:=\sqrt{\oG}\mwo, \ b:=\sqrt{\oH}\mwt, \ c:=(\oG-1)\mwo+(\oH-1)\mwt, \ d:=(\oG-1)\mwo-(\oH-1)\mwt.
\end{equation}
From~\eqref{def:AandB_ex}, we have
\begin{equation} \label{A-B=d}
 A_z-B_z=d.
\end{equation}
We also have the dispersion relation~\eqref{spec_eq_cos},
\begin{equation}\label{disp_eq_cosx}
 A_zB_z=a^2+b^2+2ab\cos z,
\end{equation}
as well as, from the definition of $D_x$ in~\eqref{def:D_ex},
\begin{equation} \label{compl_D}
 2abD_z \sin z=cB_z-\frac{d(a^2-b^2)}{A_z}.
\end{equation}
We will use the distributional identities~\eqref{n,n},\eqref{n,n+1} together with 
\begin{align}
 & \sum_{n\ge0} \cos nx \cos ny =\frac{\pi}{2} \delta(x-y) + \frac12, \label{cos_n,n} \\
 & \sum_{n\ge0} \Big(\cos nx \cos((n+1)y) + \cos((n+1)x) \cos ny \Big)=\pi \cos x \delta(x-y), \label{cos_cos_shifted} \\
 & \sum_{n\ge0} \Big(\cos nx \sin((n+1)y) + \cos((n+1)x) \sin ny \Big)=(\cos x+\cos y)C(x,y)+\frac12 \sin y, \label{cos_sin_shifted}
\end{align}
where
\begin{equation}\label{def:sum_C}
 C(x,y):=\sum_{n\ge0} \cos nx \sin ny.
\end{equation}
We also have 
\begin{equation}\label{cos_sin_C}
 2(\cos x-\cos y)C(x,y)=\sin y, \quad 2(\cos x-\cos y)C(y,x)=-\sin x.
\end{equation}
The proofs of~\eqref{cos_n,n}--\eqref{cos_sin_shifted} and~\eqref{cos_sin_C} are given in Appendix~\ref{identities_prop2}.

Let us compute the different scalar products. 
Using~\eqref{gen_efct2} and collecting the different products of sines, we have
\begin{align*}
 \big \langle f^{(2)}_{\epsilon,x}, f^{(2)}_{\epsilon,y} \big \rangle 
 = &\Big(1+ \frac{2(a^2+b^2)+B_xB_y}{A_xA_y} \Big) \sum_{n\ge0} \sin nx \sin ny \\
 & + \frac{2ab}{A_xA_y} \sum_{n\ge0} \Big(\sin nx \sin((n+1)y) + \sin((n+1)x) \sin ny \Big).
\end{align*}
Using~\eqref{delta_cont_fct}
--\eqref{n,n+1}, we obtain
\begin{align*}
\big \langle f^{(2)}_{\epsilon,x}, f^{(2)}_{\epsilon,y} \big \rangle
 = \frac{\pi}{2} \Big(1+ \frac{B_x^2}{A_x^2}+ \frac{2(a^2+b^2+2ab\cos x)}{A_x^2} \Big) \delta(x-y) = \frac{\pi}{2} \Big(1+ \frac{B_x}{A_x} \Big)^2 \delta(x-y) ,
\end{align*}
where, in the last equality, we used~\eqref{disp_eq_cosx}. This is the formula for $i=j=2$.

Similarly, using~\eqref{gen_efct1},\eqref{gen_efct2}, and collecting the different products of sines and cosines, we have
\begin{align*}
 \big \langle f^{(1)}_{\epsilon,x}, f^{(2)}_{\epsilon,y} \big \rangle 
 = & \left(1 - \frac{B_y}{A_y} + \frac{a^2+b^2}{A_y} \Big( \frac{1}{A_x} - \frac{1}{B_x} \Big) \right) \sum_{n\ge0} \cos nx \sin ny 
  - \frac{D_x}{A_y} \Big(B_y+\frac{a^2+b^2}{B_x} \Big) \sum_{n\ge0} \sin nx \sin ny \\
 & - \frac{abD_x}{B_xA_y} \sum_{n\ge0} \Big(\sin nx \sin((n+1)y) + \sin((n+1)x) \sin ny \Big) \\
 & + \frac{ab}{A_y} \Big( \frac{1}{A_x}- \frac{1}{B_x} \Big) \sum_{n\ge0} \Big(\cos nx \sin((n+1)y) + \cos((n+1)x) \sin ny \Big). 
\end{align*}
Using~\eqref{delta_cont_fct}
--\eqref{n,n+1}, together with~\eqref{cos_sin_shifted} and \eqref{def:sum_C}, and collecting the coefficient of $\delta(x-y)$, we obtain
\begin{align*}
\big \langle f^{(1)}_{\epsilon,x}, f^{(2)}_{\epsilon,y} \big \rangle
 = - \frac{\pi}{2} D_x \Big(1+\frac{B_x}{A_x} \Big) \delta(x-y) + R_{12}(x,y),
\end{align*}
where we used~\eqref{disp_eq_cosx} to simplify the coefficient of $\delta(x-y)$, and
\begin{equation}\label{R_12}
R_{12}(x,y)
 = r_{12}(x,y) C(x,y) + \frac{ab}{2A_y} \Big( \frac{1}{A_x}- \frac{1}{B_x} \Big) \sin y,
\end{equation}
with
\begin{align*}
 r_{12}(x,y)=1 - \frac{B_y}{A_y} + \frac{1}{A_y} \Big( \frac{1}{A_x} - \frac{1}{B_x} \Big) \big(a^2+b^2+ab(\cos x+\cos y)\big).
\end{align*}
Using~\eqref{A-B=d} and~\eqref{disp_eq_cosx}, this simplifies to 
\begin{equation}\label{r_12}
 r_{12}(x,y)= \frac{ab}{A_y} \Big( \frac{1}{A_x} - \frac{1}{B_x} \Big) (\cos y-\cos x).
\end{equation}
Substituting~\eqref{r_12} into~\eqref{R_12} and using~\eqref{cos_sin_C}, we obtain $R_{12}(x,y)=0$. Thus
\begin{equation*}
 \big \langle f^{(1)}_{\epsilon,x}, f^{(2)}_{\epsilon,y} \big \rangle
 = - \frac{\pi}{2} D_x \Big(1+\frac{B_x}{A_x} \Big) \delta(x-y).
\end{equation*}
The case $i=2,j=1$ follows by symmetry. 

Finally, using~\eqref{gen_efct1}, collecting the different products of sines and cosines, we apply~\eqref{delta_cont_fct}--\eqref{n,n+1} and \eqref{cos_n,n}--\eqref{def:sum_C}. Collecting the coefficient of $\delta(x-y)$, we obtain
\begin{align*}
 \big \langle f^{(1)}_{\epsilon,x}, f^{(1)}_{\epsilon,y} \big \rangle
=&\frac{\pi}{2} \left(2+\frac{B_x}{A_x} + \frac{A_x}{B_x} + D_x^2\Big(1+\frac{A_x}{B_x}\Big) \right) \delta(x-y) + R_{11}(x,y) \\
=&\frac{\pi}{2} \Big(1+\frac{B_x}{A_x}\Big) \Big(1+\frac{A_x}{B_x}+\frac{A_x}{B_x}D_x^2\Big) \delta(x-y) + R_{11}(x,y),
\end{align*}
where we used~\eqref{disp_eq_cosx} to simplify the coefficient of $\delta(x-y)$, and
\begin{align}
  R_{11}(x,y) = & \Big(1+\frac{a^2+b^2+ab(\cos x+\cos y)}{B_xB_y} \Big) \big(D_yC(x,y) +D_xC(y,x)\big) \label{line1_R11} \\
 & + \frac{ab}{2B_xB_y} (D_x\sin x+D_y \sin y) +\frac{b^2-a^2}{2} \Big(\frac{1}{A_xA_y}-\frac{1}{B_xB_y} \Big).  \notag 
\end{align}
Let us now show that $R_{11}(x,y)=0$. 
We start by rewriting the first factor in the first term of~\eqref{line1_R11}.
Using~\eqref{disp_eq_cosx} and~\eqref{A-B=d}, we have
\begin{equation}\label{1stfactor_l1}
 1+\frac{a^2+b^2+ab(\cos x+\cos y)}{B_xB_y}
 =\frac{(A_x+A_y-d)(B_x+B_y)}{2B_xB_y}.
\end{equation}
For the second factor in the first term of~\eqref{line1_R11}, by~\eqref{cos_sin_C}, we have
\begin{equation}\label{1st_simpl}
 2(\cos x-\cos y)\left(D_yC(x,y) +D_xC(y,x)\right)=D_y\sin y -D_x\sin x.
\end{equation}
Using~\eqref{compl_D}, we rewrite the right-hand side of~\eqref{1st_simpl} as
\begin{multline} \label{factor_rhs}
 D_y\sin y -D_x\sin x 
 =\frac{1}{2ab} \left(c(B_y-B_x)-d(a^2-b^2)\Big(\frac{1}{A_y}-\frac{1}{A_x}\Big) \right) \\
 =\frac{A_y-A_x}{2ab} \left(c+\frac{d(a^2-b^2) }{A_xA_y} \right),
\end{multline}
where, in the second equality, we used $B_y-B_x=A_y-A_x$, which follows from~\eqref{A-B=d}.
Using~\eqref{disp_eq_cosx} and then~\eqref{A-B=d}, we also have
\begin{equation}\label{1stfactor_lhs}
2ab(\cos x-\cos y)=A_xB_x-A_yB_y
=(A_x-A_y)(A_x+A_y-d).
\end{equation}
Therefore, using~\eqref{factor_rhs} and~\eqref{1stfactor_lhs} in~\eqref{1st_simpl}, we obtain
\begin{equation}\label{2ndfactor_l1}
D_yC(x,y) +D_xC(y,x)=-\frac{c+\frac{d(a^2-b^2) }{A_xA_y} }{2(A_x+A_y-d)}.
\end{equation}
Thus, substituting~\eqref{1stfactor_l1} and~\eqref{2ndfactor_l1} into the first term of~\eqref{line1_R11}, and using~\eqref{compl_D} in the second term, we obtain
\begin{align*}
  R_{11}(x,y) = &-\frac{B_x+B_y}{4B_xB_y} \left(c+\frac{d(a^2-b^2)}{A_xA_y} \right) + \frac{c(B_x+B_y)-d(a^2-b^2) \Big(\frac{1}{A_x}+\frac{1}{A_y}\Big)}{4B_xB_y} \\
  & +\frac{(a^2-b^2)(A_xA_y-B_xB_y)}{2A_xA_yB_xB_y} \\
 = &-\frac{d(a^2-b^2)(A_x+A_y+B_x+B_y)}{4A_xA_yB_xB_y} +\frac{(a^2-b^2)(A_xA_y-B_xB_y)}{2A_xA_yB_xB_y}.
\end{align*}
Since, by \eqref{A-B=d}, we have
$$
A_x+A_y+B_x+B_y=2(A_x+A_y-d), \quad 
A_xA_y-B_xB_y
=d(A_x+A_y-d),
$$
we conclude that $R_{11}(x,y)=0$ and therefore
\begin{equation*}
 \big \langle f^{(1)}_{\epsilon,x}, f^{(1)}_{\epsilon,y} \big \rangle = \frac{\pi}{2} \Big(1+\frac{B_x}{A_x}\Big) \Big(1+\frac{A_x}{B_x}+\frac{A_x}{B_x}D_x^2\Big) \delta(x-y).
\end{equation*}

\end{proof}

\subsubsection{Completeness}
\begin{proposition}\label{completeness2_prop}  
The following completeness condition is satisfied: for all $m,n\in\Z$, 
\begin{equation}\label{completeness2}
\delta_{m,n}= \sum_{k=1}^2f_k(m)f_k(n)+ \int_0^\pi \sum_{\epsilon\in\{\pm1\}} F_{\epsilon,x}(m)^T \mathcal{H}_{\epsilon}(x)^{-1} F_{\epsilon,x}(n) \mathrm{d}x,
\end{equation}
where
$$
F_{\epsilon,x}(m)=
\begin{pmatrix}
f^{(1)}_{\epsilon,x}(m) \\
f^{(2)}_{\epsilon,x}(m)
\end{pmatrix},
$$
and $f_1$, $f_2$, $f^{(1)}_{\epsilon,x}$, $f^{(2)}_{\epsilon,x}$ are given in Proposition~\ref{spectrum_L^pr2}, and $\mathcal{H}_{\epsilon}(x)$ is given in Proposition~\ref{scalar_products2_prop}.
\end{proposition}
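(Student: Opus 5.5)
We will prove \eqref{completeness2} with the same generating-function strategy used for Proposition~\ref{completeness1_prop} in Appendix~\ref{appB}. The goal is to reduce the $x$-integral, for fixed $m,n$, to a contour integral over the unit circle $|\xi|=1$, where $\xi=e^{ix}$, and evaluate it by residues. The discrete terms $f_1(m)f_1(n)$ and $f_2(m)f_2(n)$ should then cancel exactly against residue contributions at the points $\xi_1^{\pm1}$ and $\xi_2^{\pm1}$ from \eqref{ABneq0_sol_xi}. What remains should be $\delta_{m,n}$.

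\textbf{Step 1: rewrite the integrand in complex form.} By Proposition~\ref{scalar_products2_prop}, the kernel $F_{\epsilon,x}(m)^T\mathcal{H}_\epsilon(x)^{-1}F_{\epsilon,x}(n)$ is a $2\times2$ quadratic form. Rather than inverting $\mathcal H_\epsilon$ entrywise, I would return to the complex generalized eigenfunction $f=f^{(1)}+if^{(2)}$ built in the proof of Proposition~\ref{spectrum_L^pr2}, normalized by $(\alpha,\beta)=(1,0)$, together with its conjugate $\bar f$. In this basis the Gram matrix should become diagonal up to a simple factor, since $\langle f_x,\bar f_y\rangle$ vanishes. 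This follows because the $\delta$-coefficients of $\mathcal H_\epsilon$ satisfy $H_{11}-H_{22}=\tfrac{A}{B}(1+D^2)-\tfrac{B}{A}$, and a short check shows $\det\mathcal H_\epsilon$ factors nicely. I expect the kernel to reduce to
\[
\frac{\operatorname{Re}\big(f_{\epsilon,x}(m)\overline{f_{\epsilon,x}(n)}\big)}{\tfrac{\pi}{2}(1+\tfrac BA)^2\,|\check\beta|^2}.
\]
Here $|\check\beta|^2$ is computed from \eqref{ab_check}; it plays the role that the reflection coefficient $H(x)$ played in the half-line case.

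\textbf{Step 2: combine the two bands into one variable.} I would sum over $\epsilon$ and change variables from $x$ to $\rho$ (equivalently $\lambda$) using the dispersion relation \eqref{spec_eq_cos}, $AB=a^2+b^2+2ab\cos x$. The two bands $\epsilon=\pm1$ are the two $\rho$-roots of a quadratic. In terms of $\xi$, the sum over $\epsilon$ becomes the sum over the two branches of a double cover, and I would parametrize it by a uniformizing variable. The simplest uniformizing choice is $\xi$ together with $A$: given $\xi$, $\rho$ satisfies a quadratic, so the combined integrand is a symmetric function of the two roots and hence rational in $\xi$.

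With this, $\int_0^\pi\sum_\epsilon(\cdots)\,dx$ becomes $\frac{1}{2\pi i}\oint_{|\xi|=1}R_{m,n}(\xi)\,\frac{d\xi}{\xi}$ with $R_{m,n}$ rational. The range $(0,\pi)$ extends to the full circle by the $\xi\leftrightarrow\xi^{-1}$ symmetry, using $\operatorname{Re}$. I would then use the sewing data \eqref{ab_check} to express $|\check\beta|^2$ as a ratio of polynomials in $\xi+\xi^{-1}$. By the factorization found in the eigenvalue computation, its numerator should contain the factors $\sqrt{MN}(\xi+\xi^{-1})+M+N$ and $\sqrt{MN}(\xi+\xi^{-1})-(MN+1)$. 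Consequently the poles of $R_{m,n}$ inside the disc are at $\xi=0$ and at $\xi_1^{-1}$, $\xi_2^{-1}$.

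\textbf{Step 3: residues.} The residues at $\xi_j^{-1}$ should equal $-f_j(m)f_j(n)$, which matches the normalizations \eqref{betas12}. The residue at $\xi=0$ should give $\delta_{m,n}$. I would organize this by cases on the parities and signs of $m$ and $n$, using the $(M,\omega_1)\leftrightarrow(N,\omega_2)$ symmetry to halve the work. For $m,n$ both even and nonnegative the integrand is a pure Laurent monomial times a fixed rational function, so this case serves as a check. For the odd cases, the extra factors $(b+a\xi)/A$ and $(a+b\xi)/B$ appear, and the $\rho$-dependence must again be symmetrized over $\epsilon$.

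\textbf{Main obstacle.} The hardest step is the $\epsilon$-symmetrization. Individually, the integrands involve $R_x$, a square root, and only their sum is rational in $\xi$. If the uniformization through $\xi$ becomes unwieldy, I would first try integrating in the spectral variable $\rho$ instead: the two bands together form the set where the Green's function $(\mathcal M-\rho/\kappa)^{-1}(m,n)$ has a jump. I would then prove completeness as the Stone formula, computing the resolvent explicitly from the decaying solutions $\beta\xi^{-n}$ with $|\xi|>1$ constructed in the proof of Proposition~\ref{spectrum_L^pr2}. In that approach the poles of the resolvent are exactly $\lambda_1$ and $\lambda_2$, and the residue identity is automatic from the large-$\rho$ behaviour $(\mathcal M-z)^{-1}\sim -z^{-1}$.
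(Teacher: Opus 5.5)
There is a genuine gap in Step~1: $f_x$ and $\bar f_y$ are \emph{not} orthogonal, so nothing becomes diagonal in the basis $f,\bar f$. With $f=f^{(1)}_{\epsilon,x}+if^{(2)}_{\epsilon,x}$, the $\delta(x-y)$-coefficient of $\sum_m f_x(m)f_y(m)$ is
\[
\mathcal H_{11}-\mathcal H_{22}+2i\mathcal H_{12}=\frac{\pi}{2}\Big(1+\frac BA\Big)\Big(\frac AB(1+D^2)-\frac BA-2iD\Big)=\frac{2\pi(A+B)}{B}\,\check\alpha\check\beta ,
\]
with $\check\alpha,\check\beta$ from \eqref{ab_check}. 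You can also see this directly: only the cross terms $\check\alpha\check\beta$ on the negative half-line produce a $\delta$. This coefficient is proportional to the reflection amplitude $\check\alpha$, and $\check\alpha$ never vanishes. Indeed, $\check\alpha=0$ forces $A=B$ (i.e.\ $d=0$ in \eqref{not:abcd}) and $D=0$. But if $d=0$, then \eqref{compl_D} gives $AD=cAB/(2ab\sin x)\neq0$, since $c=(M-1)\omega_1+(N-1)\omega_2>0$. In scattering terms, $f$ is a Jost-type solution with a single exponential on the nonnegative side, and $\bar f$ is its time reversal, not the orthogonal scattering state. The identity for $H_{11}-H_{22}$ you quote is correct but implies nothing of this kind.

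Consequently your kernel is wrong in an essential way, and the check you propose would expose it. For $m=2k$, $n=2l$ with $k,l\ge0$, your kernel is a multiple of $\cos((k-l)x)$, so its integral depends only on $k-l$. The required value $\delta_{k,l}-\sum_j f_j(2k)f_j(2l)=\delta_{k,l}-\sum_j\beta_j^2\xi_j^{-(k+l)}$ does not. The correct $\epsilon$-summed kernel in this case is
\[
\frac{2}{\pi}\sin kx\sin lx+\frac{4AB\cos((k+l)x)}{\pi(R_x^2+E_x^2)}
=\frac{\cos((k-l)x)}{\pi}-\frac{d^2+E_x^2}{\pi(R_x^2+E_x^2)}\cos((k+l)x),\qquad E_x:=A_{\epsilon,x}D_{\epsilon,x}.
\]
The reflected $\cos((k+l)x)$ term you discard is exactly the one that integrates to $-f_1(2k)f_1(2l)-f_2(2k)f_2(2l)$ and carries the poles at $\xi_1^{-1},\xi_2^{-1}$.

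To repair Step~1 in your own framework, keep the off-diagonal Gram entry $q$ just computed. Let $p$ be the $\delta$-coefficient of $\sum_m f_x(m)\overline{f_y(m)}$. Then the kernel is $\frac{2}{p^2-|q|^2}\big(p\operatorname{Re}(f(m)\overline{f(n)})-\operatorname{Re}(\bar q\,f(m)f(n))\big)$, and the second term is the reflected part.

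The paper instead inverts $\mathcal H_\epsilon(x)$ in the real basis. It uses $A+B=\epsilon R_x$, $A-B=d$, $AB=a^2+b^2+2ab\cos x$ and the $\epsilon$-independence of $E_x$, so after summing over $\epsilon$ only terms even in $\epsilon R_x$ survive; your Step~2 intuition is right. It then factors $R_x^2+E_x^2=\frac{AB(M+N+2\sqrt{MN}\cos x)(MN+1-2\sqrt{MN}\cos x)}{MN\sin^2x}$, which confirms your pole locations $\xi_1^{-1}=-\sqrt{M/N}$ and $\xi_2^{-1}=1/\sqrt{MN}$. Where you would take residues, the paper uses partial fractions, the Poisson-kernel expansions \eqref{first_frac}--\eqref{second_frac} and cosine orthogonality. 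The two Poisson pieces are your two residues, so once the kernel is fixed your contour version is the same computation.

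Two further cautions. In the odd-index cases, the factors $1/A$ and $1/B$ create apparent poles at the zeros of $AB=(b+a\xi)(b+a\xi^{-1})$, one of which lies in the disc; you must check that they cancel, not assume it. Your Stone-formula fallback is a legitimate alternative, but as sketched it only yields the spectral density as a resolvent jump. You would still need to identify that density with $F_{\epsilon,x}(m)^T\mathcal H_\epsilon(x)^{-1}F_{\epsilon,x}(n)$. You can do this either by computing the jump explicitly, or by pairing a completeness relation with an unknown weight matrix against $F_{\epsilon,y}$ and using Proposition~\ref{scalar_products2_prop} to force that weight to be $\mathcal H_\epsilon(x)^{-1}$.
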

The proof is given in Appendix~\ref{appC}.

\subsection{Heat kernel on the projected graph and proof of Theorem~\ref{K2}}

\begin{proposition}\label{heatkernel2}  
The heat kernel of the $\pi_e$-projected Laplacian $\mathcal{L}$ of $\wl$ is given by
\begin{equation}\label{heatkernel2} 
h^{\wl}_t(m,n)= \sum_{k=1}^2 e^{-t\lambda_k} f_k(m)f_k(n)+ \int_0^\pi \sum_{\epsilon\in\{\pm1\}}  e^{-t\lambda_{\epsilon,x}} F_{\epsilon,x}(m)^T \mathcal{H}_{\epsilon}(x)^{-1} F_{\epsilon,x}(n) \mathrm{d}x,  
\end{equation}
where $\lambda_k$, $f_k$, $\lambda_{\epsilon,x}$, $F_{\epsilon,x}$ are given in Proposition~\ref{spectrum_L^pr2} and $\mathcal{H}_{\epsilon}(x)$ is given in Proposition~\ref{scalar_products2_prop}. 
\end{proposition}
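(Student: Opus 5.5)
The plan is to read the formula off from the spectral resolution of the bounded self-adjoint operator $\mathcal{L}=\mathcal{L}_{\wl}$ on $\ell^2(\Z)$. Proposition~\ref{completeness2_prop} says that the kernel
$$
E(m,n):=\sum_{k=1}^2 f_k(m)f_k(n)+\int_0^\pi\sum_{\epsilon\in\{\pm1\}}F_{\epsilon,x}(m)^T\mathcal{H}_\epsilon(x)^{-1}F_{\epsilon,x}(n)\,\mathrm{d}x
$$
represents the identity, $E(m,n)=\delta_{m,n}$. I will insert into this resolution the functional calculus of $\mathcal{L}$, evaluated on the functions $f_k$ and $F_{\epsilon,x}$. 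These functions are (generalized) eigenfunctions with eigenvalues $\lambda_k$ and $\lambda_{\epsilon,x}$, by Proposition~\ref{spectrum_L^pr2}.

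\emph{Polynomials.} I first show that for every polynomial $p$ the matrix coefficients of $p(\mathcal{L})$ are obtained by inserting $p(\lambda_k)$ and $p(\lambda_{\epsilon,x})$ into the resolution. By linearity it suffices to treat $p(\lambda)=\lambda^j$, and I argue by induction on $j$. Since $\mathcal{L}$ is a finite-band (tridiagonal) matrix, we have
$$
\mathcal{L}^{j+1}(m,n)=\sum_{l}\mathcal{L}(m,l)\,\mathcal{L}^j(l,n),
$$
and this sum over $l$ is finite. So I can apply $\mathcal{L}$ in the variable $m$ directly under the finite sum over $k$ and under the integral in $x$. The eigen-equations give
$$
\sum_l\mathcal{L}(m,l)f_k(l)=\lambda_k f_k(m),\qquad \sum_l\mathcal{L}(m,l)F_{\epsilon,x}(l)=\lambda_{\epsilon,x}F_{\epsilon,x}(m),
$$
pointwise and componentwise. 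These hold for the real generalized eigenfunctions because they hold for the complex solution $f$ and hence for its real and imaginary parts. This completes the induction.

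\emph{Passage to the exponential.} Next I pass from polynomials to $e^{-t\lambda}$. I will expand $h_t=\sum_j (-t)^j\mathcal{L}^j/j!$, which converges in operator norm because $\mathcal{L}$ is bounded (indeed $\|\mathcal{L}\|\le 2$). Then I sum the identity for $\mathcal{L}^j$ term by term. Interchanging the sum with the $x$-integral requires the dominating bound
$$
\sum_j\frac{t^j|\lambda_{\epsilon,x}|^j}{j!}\,\bigl|F_{\epsilon,x}(m)^T\mathcal{H}_\epsilon(x)^{-1}F_{\epsilon,x}(n)\bigr|\le e^{2t}\,\bigl|F_{\epsilon,x}(m)^T\mathcal{H}_\epsilon(x)^{-1}F_{\epsilon,x}(n)\bigr|.
$$
Here I use that $\lambda_{\epsilon,x}$ lies in the spectral intervals of Corollary~\ref{Cor_spec2}, so $|\lambda_{\epsilon,x}|\le 2$. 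The right-hand side is integrable in $x$ because the completeness integral in~\eqref{completeness2} converges for fixed $m,n$. Dominated convergence then gives~\eqref{heatkernel2}.

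\emph{Main obstacle.} The step I expect to need the most care is checking that the integrand of the completeness relation is absolutely integrable on $(0,\pi)$, including near the endpoints $x\to0,\pi$. This means showing that $\mathcal{H}_\epsilon(x)^{-1}$ stays bounded, or at least integrable against $F_{\epsilon,x}(m)F_{\epsilon,x}(n)^T$, even though $D_{\epsilon,x}$ contains a factor $1/\sin x$. I would handle this by computing
$$
\det H_\epsilon(x)=\Bigl(1+\tfrac{A}{B}\Bigr)\Bigl(1+\tfrac{B}{A}\Bigr)+D^2,
$$
where $A=A_{\epsilon,x}$, $B=B_{\epsilon,x}$, $D=D_{\epsilon,x}$. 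The entries of $\mathcal{H}_\epsilon(x)^{-1}$ are then bounded, since $D^2$ appears with matching powers in the numerator and the denominator. The remaining possible singularities come from $A$ or $B$ vanishing, and these are excluded by~\eqref{spec_eq_cos}. If this direct check becomes messy, an alternative is to read the integrability off the proof of Proposition~\ref{completeness2_prop} in Appendix~\ref{appC}, where the integral is presumably converted to a contour integral with only simple poles.
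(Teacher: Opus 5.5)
Your approach is the one the paper intends. The paper gives no separate proof of this proposition and simply reads it off the spectral resolution from Propositions~\ref{spectrum_L^pr2} and~\ref{completeness2_prop}. Your induction on powers of the tridiagonal operator $\mathcal{L}$ makes that step rigorous. It even shows that each integrand $\sum_\epsilon \lambda_{\epsilon,x}^j F_{\epsilon,x}(m)^T\mathcal{H}_\epsilon(x)^{-1}F_{\epsilon,x}(n)$ is integrable, since it is a finite combination of the integrands at level $j-1$.

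\textbf{The domination step.} This is the one step that does not hold up as written. Set $g_\epsilon(m,n):=F_{\epsilon,x}(m)^T\mathcal{H}_\epsilon(x)^{-1}F_{\epsilon,x}(n)$. Your majorant needs each $|g_\epsilon|$ to be integrable. Convergence of the completeness integral of $g_{+1}+g_{-1}$ does not give this.

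Your direct check also fails in two places.
\begin{itemize}
\item The determinant is $\det H_\epsilon(x)=(1+\frac AB)(1+\frac BA)+\frac AB D^2$, not $+D^2$.
\item \eqref{spec_eq_cos} gives $AB>0$ only on the open interval $(0,\pi)$. When $\sqrt{M}\,\omega_1=\sqrt{N}\,\omega_2$, you have $AB=2M\omega_1^2(1+\cos x)\to 0$ as $x\to\pi$. Then, for each $\epsilon$, one of $A$, $B$ tends to $0$. For one choice of $\epsilon$, some components of $F_{\epsilon,x}(m)$ grow like $(\pi-x)^{-1}$. The integrand stays bounded only because $\mathcal{H}_\epsilon(x)^{-1}$ has compensating zeros there.
\end{itemize}
You also do not need Corollary~\ref{Cor_spec2}, which the paper derives downstream of this result. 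The explicit formula shows $\lambda_{\epsilon,x}$ is continuous on $[0,\pi]$, so it is bounded by some constant $\Lambda$.

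\textbf{A clean fix.} For $x\in(0,\pi)$, $\mathcal{H}_\epsilon(x)$ is positive definite. Indeed, $(\mathcal{H}_\epsilon(x))_{22}=\frac{\pi}{2}(1+\frac BA)^2>0$ because $A+B=\epsilon R_x\neq 0$. Also $\det\mathcal{H}_\epsilon(x)>0$ because $AB>0$. Hence $g_\epsilon(m,m)\ge 0$, and Cauchy--Schwarz for the form $u^T\mathcal{H}_\epsilon(x)^{-1}v$ gives $|g_\epsilon(m,n)|\le\frac12\bigl(g_\epsilon(m,m)+g_\epsilon(n,n)\bigr)$.

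Now apply \eqref{completeness2} with $m=n$: $\int_0^\pi\bigl(g_{+1}(m,m)+g_{-1}(m,m)\bigr)\,\mathrm{d}x=1-\sum_k f_k(m)^2<\infty$. Both terms are nonnegative, so each $g_\epsilon(m,m)$ is integrable. Therefore $e^{t\Lambda}\sum_\epsilon|g_\epsilon(m,n)|$ is an integrable majorant, and dominated convergence completes your argument.
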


We can now prove Theorem~\ref{K2}.
\begin{proof}[Proof of Theorem~\ref{K2}]
Let $h^{\mathcal{Y}}_t=e^{-t\mathcal{L}_{\mathcal{Y}}}$ be the heat kernel of the Laplacian $\mathcal{L}_{\mathcal{Y}}$ of $\mathcal{Y}$. 
Formula~\eqref{eq6} implies that the function $k^{\mathcal{Y}}_t(x)$ in~\eqref{heat_kernel_det} is given by
$$
k^{\mathcal{Y}}_t(x)=K^{\mathcal{Y}}_t(\pi_e(x)), \quad K^{\mathcal{Y}}_t(n)=\frac{1}{\sqrt{|\pi_e^{-1}(n)|}} h^{\wl}_t(0,n),
$$
where $|\pi_e^{-1}(n)|$ is given in~\eqref{fibers2} and $h^{\wl}_t$ is given in~\eqref{heatkernel2}.
\end{proof}

\appendix
\section{Distributional identities} 
\subsection{Proof of~\eqref{n+1,n-1}} \label{appA}
\begin{proof}
Let $x,y\in(0,\pi)$. 
From the distributional form of the Poisson summation formula~\eqref{poisson_summation} and $\delta(\frac{t}{a})=a\delta(t)$, for $a\in\R_{>0}$, we have
\begin{align}
 2\sum_{n\ge0} \cos nt 
 =1+\sum_{n\in\Z} e^{int}
 =1+2\pi \sum_{k\in\Z} \delta(t+2\pi k). \label{sum_cos}
\end{align}
Using \eqref{2sin_id}, and then the cosine addition formula, we have
\begin{multline*}
  2\sum_{n\ge0} \Big(\sin((n+1)x)\sin((n-1)y) + \sin((n+1)y)\sin((n-1)x) \Big) \label{2(n+1,n-1)} \\
  = 2\cos(x+y) \sum_{n\ge0} \cos(n(x-y))-2\cos(x-y) \sum_{n\ge0} \cos(n(x+y)),
 \end{multline*} 
 using \eqref{sum_cos}, we obtain
  \begin{equation*}
  = \cos(x+y) \Big(1+2\pi \sum_{k\in\Z} \delta(x-y+2\pi k)\Big) -\cos(x-y) \Big(1+2\pi \sum_{k\in\Z} \delta(x+y+2\pi k)\Big).
   \end{equation*} 
   Since $x,y\in(0,\pi)$, only the term $k=0$ contributes to the first sum, while the second sum vanishes, thus we obtain
   \begin{equation*}
  = \cos(x+y) \Big(1+2\pi \delta(x-y)\Big) -\cos(x-y) = \cos(x+y)-\cos(x-y) + 2\pi\cos(2x) \delta(x-y),
   \end{equation*} 
   using~\eqref{2sin_id} and~\eqref{delta_cont_fct}, we obtain
   \begin{equation*}
  = -2\sin x \sin y + 2\pi\cos(2x) \delta(x-y).
   \end{equation*} 
   Dividing by 2 proves~\eqref{n+1,n-1}.
\end{proof}

\subsection{Identities used in the proof of Proposition~\ref{scalar_products2_prop}} \label{identities_prop2}
Let $x,y\in(0,\pi)$.

Identity~\eqref{cos_n,n} is obtained from 
 $2\cos a \cos b=\cos(a-b)+\cos(a+b)$,
 \eqref{sum_cos}, and the argument used in the proof of~\eqref{n+1,n-1}.

\begin{proof}[Proof of~\eqref{cos_cos_shifted}]
Using 
\begin{equation} \label{2_cos_cos_sum}
 2\cos x \cos nx=\cos((n+1)x)+\cos((n-1)x),
\end{equation}
and reindexing the second sum, we have
$$
2\cos x \sum_{n\ge0} \cos nx \cos ny =\cos x + \sum_{n\ge0} \Big( \cos((n+1)x)\cos ny + \cos nx\cos ((n+1)y) \Big).
$$
Thus, using~\eqref{cos_n,n}, we obtain
\begin{align*}
 \sum_{n\ge0} \Big( \cos((n+1)x)\cos ny + \cos nx\cos ((n+1)y) \Big) 
 =\pi \cos x \delta(x-y).
\end{align*}
\end{proof}

\begin{proof}[Proof of~\eqref{cos_sin_shifted}]

Using the sine and cosine addition formulas and definition~\eqref{def:sum_C} of $C(x,y)$, we have 
\begin{align*} 
&\sum_{n\ge0} \Big( \cos nx \sin((n+1)y) +\cos((n+1)x)\sin ny \Big) \\ 
&\qquad= (\cos x+\cos y)C(x,y) +\sin y\sum_{n\ge0} \cos nx \cos ny -\sin x\sum_{n\ge0} \sin nx \sin ny,
\end{align*}
using~\eqref{delta_cont_fct},\eqref{n,n}, and~\eqref{cos_n,n}, we obtain
$$ 
\sum_{n\ge0} \Big(\cos nx \sin((n+1)y) +\cos((n+1)x)\sin ny \Big) = (\cos x+\cos y)C(x,y)+\frac12\sin y. 
$$
\end{proof}

\begin{proof}[Proof of~\eqref{cos_sin_C}]
For $N\ge0$, using \eqref{2_cos_cos_sum}, and
\begin{equation*}
 2\cos y \sin ny = \sin(n+1)y)+\sin((n-1)y),
\end{equation*}
and reindexing two sums, we have 
\begin{align*}
 2(\cos x-\cos y)\sum_{n=0}^N \cos nx \sin ny 
 &=\sin y+ \cos((N+1)x) \sin Ny - \cos Nx \sin((N+1)y).
\end{align*}
Taking the limit $N\to\infty$, the last two terms tend to 0 distributionally. Thus, we obtain
$$
 2(\cos x-\cos y)C(x,y)=\sin y.
$$
The second identity is obtained by exchanging $x$ and $y$.
\end{proof}

\section{Proof of Proposition~\ref{completeness1_prop}} \label{appB}
\begin{proof} 
For convenience, we introduce
$$
\alpha=\frac{1}{\sqrt{(\mN-1)(r-1)}}, \ \beta=\sqrt{\frac{r-1}{\mN-1}},
$$
so that $\mN-1=\frac{1}{\alpha\beta}$ and $r-1=\frac{\beta}{\alpha}$. Note that $\alpha <1$, while $\beta <1$ if and only if $\mN>r$.
In the case $\mN>r$, formula~\eqref{disc_efct} becomes
\begin{equation}
 f_*(n)=
\begin{cases}
 (-\beta)^n \sqrt{\frac{(\alpha+\beta)(1-\beta^2)}{(1+\alpha \beta)\beta}} & \text{if } n\ge1 \\
  \sqrt{\frac{1-\beta^2}{1+\alpha \beta}} & \text{if } n=0.
\end{cases}
\end{equation}
Formula~\eqref{gen_efct} becomes
\begin{equation}
 f_x(n)=
\begin{cases}
 \frac{\beta}{\alpha}\sin((n+1)x)+\frac{1-\alpha \beta}{\alpha}\sin(nx)-\sin((n-1)x) & \text{if } n\ge1 \\
 \frac{\sqrt{(\alpha+\beta)\beta}}{\alpha}\sin(x) & \text{if } n=0.
\end{cases}
\end{equation}
We also rewrite the positive function $H$ from Proposition~\ref{scalar_products1} in terms of $\alpha$, $\beta$:
$$
 H(x)= \frac{\pi}{2\alpha^2} \Big(\big(1-\alpha\beta +(\beta-\alpha)\cos(x)\big)^2 + (\alpha+\beta)^2\sin^2(x) \Big).
 $$
Using $\sin(x)=\frac{e^{ix}-e^{-ix}}{2i}$, we obtain
  \begin{align}
   H(x) &= \frac{\pi}{2\alpha^2}
  \Big\vert 1-\alpha\beta+\beta e^{ix}- \alpha e^{-ix} \Big\vert^2 
 =\frac{\pi}{2\alpha^2} \vert (e^{ix}-\alpha) (1+\beta e^{ix}) \vert^2. \label{newH(x)}
\end{align}
In particular, the factors show that after setting $z=e^{ix}$, the denominator contributes poles at $z=\alpha$, $z=\frac{1}{\alpha}$, $z=-\beta$ and $z=-\frac{1}{\beta}$.

Proving the completeness identity~\eqref{completeness1} reduces to evaluating the integral in that expression. We compute it by using the generating functions, rewriting the integrand as a rational function on the unit circle via $z=e^{ix}$ and applying the residue theorem.

We consider the generating functions, for $|w|<1$,
$$
g_*(w)=\sum_{n\ge0} f_*(n)w^n \quad \text{and} \quad g_x(w)=\sum_{n\ge0} f_x(n)w^n.
$$
Let us fix $u,v\in\C$ with $|u|,|v|<1$. Multiplying the completeness identity~\eqref{completeness1} by $u^mv^n$ and summing over $m,n\ge0$, the left-hand side becomes 
$$
\sum_{m,n\ge0} \delta_{m,n}u^mv^n=\sum_{n\ge0} (uv)^n=\frac{1}{1-uv},
$$	
and the right-hand side produces the generating functions. Therefore, expression~\eqref{completeness1} becomes
\begin{equation}\label{completeness_genfct}
\frac{1}{1-uv}=\int_0^\pi g_x(u)g_x(v)\frac{\mathrm{d}x}{H(x)}+
\begin{cases}
 g_*(u)g_*(v)&\text{if } \mN>r\\
 0&\text{if } \mN\le r.
\end{cases}
\end{equation}
Now, let us compute $g_*(w)$ and $g_x(w)$. We have
\begin{align*}
 g_*(w)
 = \sqrt{\frac{1-\beta^2}{1+\alpha \beta}} + \sqrt{\frac{(\alpha+\beta)(1-\beta^2)}{(1+\alpha \beta)\beta}} \sum_{n\ge1}(-\beta w)^n 
 = \sqrt{\frac{1-\beta^2}{1+\alpha \beta}} \Big(1-\sqrt{\frac{\alpha+\beta}{\beta}} \frac{\beta w}{1+\beta w} \Big),
\end{align*}
where, in the last equality, we used the geometric series with $|\beta|, |w|<1$. We also have
\begin{align*}
  g_x(w) 
  = \frac{\sqrt{(\alpha+\beta)\beta}}{\alpha}\sin(x) +  \sum_{n\ge1}\Big(\frac{\beta}{\alpha}\sin((n+1)x)+\frac{1-\alpha \beta}{\alpha}\sin(nx)-\sin((n-1)x)\Big) w^n 
\end{align*}
reindexing the sum by $n\mapsto n+1$ and using $\sin(mx)=\frac{z^m-z^{-m}}{2i}$, where $z=e^{ix}$, 
we factor out the sums
\begin{multline*}
 = \frac{1}{2i} \Bigg( \frac{\sqrt{(\alpha+\beta)\beta}}{\alpha}(z-z^{-1})
+ \Big( \frac{\beta}{\alpha}z^{2} +\frac{1-\alpha \beta}{\alpha}z - 1 \Big)w \sum_{n\ge0} (zw)^n  \\
 - \Big( \frac{\beta}{\alpha}z^{-2} +\frac{1-\alpha \beta}{\alpha}z^{-1} - 1 \Big)w \sum_{n\ge0} (z^{-1}w)^n \Bigg) \Bigg\vert_{z=e^{ix}},
\end{multline*}
using the geometric series and factoring out $\frac{1}{\alpha z}$, we obtain
\begin{align*}
 g_x(w)
 &=\frac{1}{2i \alpha z} \Big(\sqrt{(\alpha+\beta)\beta}(z^2-1)
+ \frac{(z-\alpha)(1+\beta z)zw}{1-zw}
 - \frac{(z+\beta)(1-\alpha z)w}{z-w} \Big) \Big\vert_{z=e^{ix}}.
\end{align*}
Let us define 
\begin{equation}\label{part_g}
 h(z,w):=\sqrt{(\alpha+\beta)\beta}(z^2-1)
+ \frac{(z-\alpha)(1+\beta z)zw}{1-zw}
 - \frac{(z+\beta)(1-\alpha z)w}{z-w},
\end{equation}
where $z=e^{ix}$ and $|w|<1$, so that 
\begin{equation}\label{short_g}
 g_x(w)=\frac{1}{2i \alpha z} h(z,w) \vert_{z=e^{ix}}.
\end{equation}
From~\eqref{newH(x)},
denoting $z=e^{ix}$, and factoring $\frac{1}{z^2}$, we obtain
\begin{equation}\label{newH}
 H(x)=\frac{\pi}{2\alpha^2z^2}(z-\alpha)(1-\alpha z)(1+\beta z)(z+\beta) \big\vert_{z=e^{ix}}.
\end{equation}
Let us now compute the integral in~\eqref{completeness_genfct}. Since $g_{-x}(n)=-g_{x}(n)$ and $H(x)=H(-x)$, we have
\begin{align*}
I(u,v):=\int_0^\pi g_x(u)g_x(v)\frac{\mathrm{d}x}{H(x)} &= \frac{1}{2} \int_{-\pi}^\pi g_x(u)g_x(v)\frac{\mathrm{d}x}{H(x)}.
\end{align*}
Using~\eqref{short_g}, \eqref{newH} and the complex variable $z=e^{ix}$, so that $\mathrm{d}x=\frac{\mathrm{d}z}{iz}$, the last integral becomes a contour integral over the unit circle 
\begin{align*}
I(u,v) &=-\frac{1}{4\pi i} \int_{|z|=1}\frac{h(z,u)h(z,v)}{(z-\alpha)(1-\alpha z)(1+\beta z)(z+\beta)z}\mathrm{d}z.
\end{align*}
We compute this integral using the residue theorem. Denote the integrand
\begin{equation}\label{integrand_F}
 F(z):=\frac{h(z,u)h(z,v)}{(z-\alpha)(1-\alpha z)(1+\beta z)(z+\beta)z},
\end{equation}
which is a rational function of $z$.

\subsection{The case $\mN>r$} 
The denominator of $F(z)$ contributes simple poles at $$z\in\{0,\alpha^{\pm1}, -\beta^{\pm1}\}.$$
Since $0<\alpha<1$, and for $\mN>r$, $0<\beta <1$, only 
$$
z\in\{0,\alpha,-\beta\}
$$ 
lies inside the unit disk. Moreover, $h(z,u)$ and $h(z,v)$ have simple poles at $z\in\{u^{\pm1}, v^{\pm1}\}$. For $|u|,|v|<1$, only 
$$
z\in\{u,v\}
$$
lies inside the unit disk. Therefore, we obtain

\begin{align*}
 I(u,v)&=-\frac{1}{4\pi i} \int_{|z|=1}F(z)\mathrm{d}z
 =-\frac{1}{2} \sum_{z_0\in\{0,\alpha,-\beta,u,v\}}\mathrm{Res}(F,z_0).
\end{align*}
The corresponding residues are given by
\begin{align}
 & \mathrm{Res}(F,u) =-\frac{h(u,v)}{(u-\alpha)(1+\beta u)} \label{Res_u}\\
 & \mathrm{Res}(F,v) =-\frac{h(v,u)}{(v-\alpha)(1+\beta v)} \label{Res_v}\\
 & \mathrm{Res}(F,0) =\frac{h(0,u)h(0,v)}{-\alpha \beta}=-\frac{(\sqrt{(\alpha+\beta)\beta}+\beta)^2}{\alpha \beta} \label{Res_0}\\
 & \mathrm{Res}(F,\alpha) =\frac{h(\alpha,u)h(\alpha,v)}{(1-\alpha^2)(1+\alpha\beta)(\alpha+\beta)\alpha} \label{Res_alpha}\\
 & \mathrm{Res}(F,-\beta) =\frac{h(-\beta,u)h(-\beta,v)}{(\alpha+\beta)(1+\alpha \beta)(1-\beta^2)\beta}. \label{Res_-beta}
 \end{align}
and their sum is a rational function of $u$ and $v$. After simplifications, we obtain
$$
I(u,v)=\frac{1}{1-uv}- g_*(u)g_*(v).
$$
\subsection{The case $\mN<r$} 
In this case, $\beta>1$, while $\alpha<1$, so 
only 
$$
z\in\{0,\alpha,-\beta^{-1},u,v\}
$$ 
lies inside the unit disk. Therefore, we obtain
\begin{align*}
 I(u,v)&=-\frac{1}{2} \sum_{z_0\in\{0,\alpha,-\beta^{-1},u,v\}}\mathrm{Res}(F,z_0).
\end{align*}
The corresponding residues are given in  \eqref{Res_u}--\eqref{Res_alpha}, and
\begin{align}\label{Res_-1/beta}
 & \mathrm{Res}(F,-\beta^{-1}) =\frac{h(-\beta^{-1},u)h(-\beta^{-1},v)}{(\alpha+\beta^{-1})(1+\alpha\beta^{-1})(\beta-\beta^{-1})\beta^{-1}}.
 \end{align}
After simplifications, we obtain
$$
I(u,v)=\frac{1}{1-uv}.
$$

\subsection{The case $\mN=r$} 
In this case, $\beta=1$, while $\alpha=\frac{1}{\mN-1}<1$, so~\eqref{part_g} and \eqref{integrand_F} become
\begin{align*}
  h(z,w)&=(z+1)\Big(\sqrt{(\alpha+1)}(z-1)+\frac{(z-\alpha)zw}{1-zw}- \frac{(1-\alpha z)w}{z-w} \Big) 
\end{align*}
$$
 F(z)=\frac{h(z,u)h(z,v)}{(z-\alpha)(1-\alpha z)(z+1)^2z}.
$$
Thus, $F(z)$ has a removable singularity at $z=-1$ and the only simple poles inside the unit disk are at $$z\in\{0,\alpha,u,v\}.$$
The corresponding residues are given in  \eqref{Res_u}--\eqref{Res_alpha}. After simplifications, we obtain
$$
I(u,v)=\frac{1}{1-uv}.
$$
\end{proof}

\section{Proof of Proposition~\ref{completeness2_prop}} \label{appC}
\begin{proof} 
We start by determining $\mathcal{H}_{\epsilon}(x)^{-1}$. Fix $\epsilon \in \{\pm1\}$, $x\in (0,\pi)$, and for simplicity, denote
\begin{equation*}
 A:=A_{\epsilon,x}, \quad B:=B_{\epsilon,x}, \quad D:=D_{\epsilon,x}.
\end{equation*}
From~\eqref{Matrix_H}--\eqref{matrix_H22}, we rewrite 
$\mathcal{H}_{\epsilon}(x)$ as
\begin{equation*}
 \mathcal{H}_{\epsilon}(x)= \frac{\pi(A+B)}{2A}
 \begin{pmatrix}
 1+\frac{A}{B}+\frac{A}{B}D^2 & -D \\
 -D & 1+\frac{B}{A}
\end{pmatrix}.
\end{equation*}
Using that the determinant of the matrix on the right is $\frac{(A+B)^2+A^2D^2}{AB}$, we obtain 
\begin{equation}\label{H_inv}
\mathcal{H}_\epsilon(x)^{-1}=\frac{2A^2B}{\pi(A+B)\big((A+B)^2+A^2D^2\big)}
 \begin{pmatrix}
 1+\frac{B}{A} & D \\
 D & 1+\frac{A}{B}+\frac{A}{B}D^2
\end{pmatrix}.
\end{equation}
Using the notation introduced in~\eqref{not:abcd}, together with~\eqref{def:rho_ex}, \eqref{def:AandB_ex} and~\eqref{disp_eq_cosx}, we have 
\begin{equation}\label{not:A,B}
 A-B=d, \quad A+B=\epsilon R_x, \quad AB=a^2+b^2+2ab\cos x.
\end{equation}
From~\eqref{compl_D}, we have
\begin{equation} 
 2abAD \sin x=cAB-d(a^2-b^2),
\end{equation}
so $AD$ is independent of $\epsilon$, and we denote 
\begin{equation}\label{not:E_x} 
 E_x:=A_{\epsilon,x}D_{\epsilon,x}=AD.
\end{equation}
Using~\eqref{not:A,B} and~\eqref{not:E_x}, we rewrite~\eqref{H_inv} as
\begin{equation}
 \mathcal{H}_\epsilon(x)^{-1}=\frac{2}{\pi \big(R_x^2+E_x^2\big)}
 \begin{pmatrix}
 AB & \frac{ABE_x}{\epsilon R_x} \\
 \frac{ABE_x}{\epsilon R_x} & A^2+\frac{AE_x^2}{\epsilon R_x}
\end{pmatrix}.
\end{equation}
Denote
\begin{equation*}
 C_x(m,n):= \sum_{\epsilon\in\{\pm1\}} F_{\epsilon,x}(m)^T \mathcal{H}_{\epsilon}(x)^{-1} F_{\epsilon,x}(n).
\end{equation*}
We compute $C_x(m,n)$ according to the signs and parities of $m$ and $n$. We give the details for the case $m=2k$ and $n=2l$, with $k,l\ge0$, and the other cases are similar.

For $m,n\ge0$, we have
\begin{equation}\label{C_even_even_pos}
 C_x(2k,2l) 
 = \frac{2}{\pi} \sin kx \sin lx + \frac{4(a^2+b^2+2ab\cos x)}{\pi(R_x^2+E_x^2)} \cos((k+l)x),
\end{equation}
Let us rewrite the second term. From~\eqref{not:A,B}--\eqref{not:E_x}, we have
\begin{equation*} 
 E_x=\frac{c(a^2+b^2+2ab\cos x)-d(a^2-b^2)}{2ab\sin x}.
\end{equation*}
Also, from~\eqref{R_x2}, together with~\eqref{not:abcd}, we have
\begin{equation*} 
 R_x^2
 =d^2+4(a^2+b^2+2ab\cos x).
\end{equation*}
Thus, after simplification, we obtain
\begin{equation*} \label{denom_exp}
 R_x^2+E_x^2=\frac{(a^2+b^2+2ab\cos x)(\oG+\oH+2\sqrt{\oG\oH}\cos x)(\oG\oH+1-2\sqrt{\oG\oH}\cos x)}{\oG\oH\sin^2 x}.
\end{equation*}
Using this,
the second term in~\eqref{C_even_even_pos} becomes
\begin{equation} \label{simpl_second_term}
 \frac{4\oG\oH\sin^2 x}{\pi(\oG+\oH+2\sqrt{\oG\oH}\cos x)(\oG\oH+1-2\sqrt{\oG\oH}\cos x)} \cos((k+l)x).
\end{equation}
Using the partial fraction decomposition, we have
\begin{multline}
 \frac{1}{(\oG+\oH+2\sqrt{\oG\oH}\cos x)(\oG\oH+1-2\sqrt{\oG\oH}\cos x)} \\ 
 =\frac{1}{(\oG+1)(\oH+1)}\left(\frac{1}{\oG+\oH+2\sqrt{\oG\oH}\cos x}+\frac{1}{\oG\oH+1-2\sqrt{\oG\oH}\cos x}\right).
\end{multline}
Using the identity for $|r|<1$, 
\begin{equation*}
\frac{1}{1+r^2-2r\cos x}=\frac{1}{1-r^2}\Bigg(1+2\sum_{j=1}^\infty r^j \cos jx \Bigg),
\end{equation*}
we obtain, since $\oG<\oH$,
\begin{equation}\label{first_frac}
\frac{1}{\oG+\oH+2\sqrt{\oG\oH}\cos x}=\frac{1}{\oH-\oG} \Bigg(1+2\sum_{j=1}^\infty \left(-\sqrt{\frac{\oG}{\oH}}\right)^j \cos jx \Bigg)
\end{equation}
and 
\begin{equation}\label{second_frac}
\frac{1}{\oG\oH+1-2\sqrt{\oG\oH}\cos x}=\frac{1}{\oG\oH-1} \Bigg(1+2\sum_{j=1}^\infty \left(\frac{1}{\sqrt{\oG\oH}}\right)^j \cos jx \Bigg).
\end{equation}
We can now integrate~\eqref{C_even_even_pos}. Using 
$$
\int_0^{\pi} \sin kx \sin lx \mathrm{d}x =
\begin{cases}
  \frac{\pi}{2} & \text{if } k=l\ge1 \\
  0 & \text{otherwise},
\end{cases}
$$
the integral of the first term in~\eqref{C_even_even_pos} is $\delta_{k,l}-\delta_{k,0}\delta_{l,0}$.
For the second term in~\eqref{C_even_even_pos},
using~\eqref{simpl_second_term}--\eqref{second_frac}, together with
\begin{equation*}
\sin^2 x \cos nx= \frac12 \cos nx-\frac14 \cos((n+2)x)-\frac14 \cos((n-2)x),
\end{equation*}
and the orthogonality relations
$$
 \int_0^{\pi} \cos jx \cos nx \mathrm{d}x =
\begin{cases}
  \pi & \text{if } j=n=0 \\
  \frac{\pi}{2} & \text{if } j=n\ge1 \\
  0 & \text{if } j\neq n
\end{cases},
\quad
 \int_0^{\pi} \cos nx \mathrm{d}x =0, \ \ n\ge1,
$$
we obtain, for $|r|<1$,
\begin{equation*}
 \int_0^{\pi} \sin^2x \cos nx \left(1+2\sum_{j=1}^\infty r^j \cos jx \right) \mathrm{d}x =
\begin{cases}
 \frac{\pi}{2}(1-r^2) & \text{if } n=0 \\
 \frac{\pi}{4}r(1-r^2) & \text{if } n=1 \\
 -\frac{\pi}{4}(1-r^2)^2r^{n-2} & \text{if } n\ge2.
\end{cases}
\end{equation*}
Thus, the integral of the second term is
\begin{equation*}
 \delta_{k+l,0} -\frac{\oH-\oG}{(\oG+1)(\oH+1)}\left(-\sqrt{\frac{\oG}{\oH}}\right)^{k+l} - \frac{\oG\oH-1}{(\oG+1)(\oH+1)} \left(\frac{1}{\sqrt{\oG\oH}} \right)^{k+l} 
\end{equation*}
Putting these two terms together and using $\delta_{k+l,0}=\delta_{k,0}\delta_{l,0}$, since $k,l\ge0$, we obtain
\begin{align*}
 \int_0^{\pi} C_x(2k,2l) \mathrm{d}x = \delta_{k,l}-\frac{\oH-\oG}{(\oG+1)(\oH+1)}\left(-\sqrt{\frac{\oG}{\oH}}\right)^{k+l} - \frac{\oG\oH-1}{(\oG+1)(\oH+1)} \left(\frac{1}{\sqrt{\oG\oH}} \right)^{k+l}.
\end{align*}
For the discrete contribution, from~\eqref{disc_efct1}--\eqref{betas12}, we have
\begin{equation*}
 f_1(2k)f_1(2l)+f_2(2k)f_2(2l)= \frac{\oH-\oG}{(\oG+1)(\oH+1)} \left(-\sqrt{\frac{\oG}{\oH}}\right)^{k+l} + \frac{\oG\oH-1}{(\oG+1)(\oH+1)} \left(\frac{1}{\sqrt{\oG\oH}}\right)^{k+l}.
\end{equation*}
Thus, we obtain
\begin{equation*}
 f_1(2k)f_1(2l)+f_2(2k)f_2(2l)+ \int_0^\pi C_x(2k,2l) \mathrm{d}x=\delta_{k,l}=\delta_{2k,2l},
\end{equation*}
proving~\eqref{completeness2} in this case.
\end{proof}

\section*{Funding}
This work was supported by the Swiss NSF Grant 200021-212864.

\section*{Acknowledgments} 
I would like to thank my advisor, Anders Karlsson, for his guidance and support throughout this work. I am also grateful to Rostislav Grigorchuk for valuable discussions.


\end{document}